\documentclass[11pt]{article}

\usepackage{amssymb, amsmath}
\usepackage{amsthm}
\usepackage[affil-sl]{authblk}
\usepackage{multirow}
\usepackage{graphicx}

\newcommand\ie{{\rm\it{i.e.}}}
\newcommand\eg{{\rm\it{e.g.}}}
\newcommand\etal{{\rm\it{et al.}}}
\newcommand\Real{\mathbb{R}}
\newcommand{\dom}[1]{\textrm{dom}\,#1}
\newcommand{\Argmin}{\operatornamewithlimits{Argmin}}
\newcommand{\argmin}{\operatornamewithlimits{argmin}}
\newcommand{\norm}[1]{\|#1 \|}
\newcommand{\innprod}[2]{\left\langle#1,#2\right\rangle}
\newcommand\ds{\displaystyle}

\newcommand\w{w}
\newcommand\apsi{\psi}
\newcommand\z{z}
\newcommand\aphi{\varphi}
\newcommand\sz{z}
\newcommand\sphi{\varphi}

\newcommand\rel{R}
\newcommand\nrel{P}
\newcommand\srel{Q}





\newtheorem{thm}{Theorem}[section]{\bf}{\it}
\newtheorem{lem}[thm]{Lemma}{\bf}{\it}
{\bf}{\it}
\newtheorem{rem}[thm]{Remark}{\bf}{\rm}
{\bf}{\rm}
\newtheorem{defi}[thm]{Definition}{\bf}{\rm}
\newtheorem{algorithm}[thm]{Method}{\bf}{\rm}
\newtheorem{prope}{Property}{\bf}{\it}

\newtheorem{applem}{Lemma}[section]{\bf}{\it}

\newtheorem*{rem*}{Remark}{\bf}{\rm}


\newcommand{\SetMarginDefault}{
	\topmargin -0.5truein			
	\headheight 0.1truein 			
	\headsep 0.1truein 				
	\topskip 0.35truein 			
	\textheight 9.5 truein			
	\footskip 0.5 truein			
	
	\evensidemargin -0.1truein			
	\oddsidemargin -0.1truein			
	\textwidth 6.47truein 
	
}
\SetMarginDefault

\begin{document}


\title{New results on subgradient methods for strongly convex optimization problems with a unified analysis}
\author{Masaru Ito ({\it ito1@is.titech.ac.jp})}
\affil{Department of Mathematical and Computing Sciences,  Tokyo Institute of Technology\\2-12-1-W8-41 Oh-okayama, Meguro, Tokyo 152-8552 Japan}
\date{Research Report B-479\\Department of Mathematical and Computing Sciences\\Tokyo Institute of Technology\\April, 2015, revised December 2015}

\maketitle


\begin{abstract}
We develop subgradient- and gradient-based methods for minimizing strongly convex functions under a notion which generalizes the standard Euclidean strong convexity.
We propose a unifying framework for subgradient methods which yields two kinds of methods, namely, the Proximal Gradient Method (PGM) and the Conditional Gradient Method (CGM), unifying several existing methods.
The unifying framework provides tools to analyze the convergence of PGMs and CGMs for non-smooth, (weakly) smooth, and further for structured problems such as the inexact oracle models.
The proposed subgradient methods yield optimal PGMs for several classes of problems and yield optimal and nearly optimal CGMs for smooth and weakly smooth problems, respectively.
\\

\noindent
\textbf{Keywords:} non-smooth/smooth convex optimization, structured convex optimization, subgradient/gradient \hspace{-1.5truemm}-based proximal method, conditional gradient method, complexity theory, strongly convex functions, weakly smooth functions.
\\

\noindent
\textbf{Mathematical Subject Classification (2010):} 90C25, 68Q25, 49M37
\end{abstract}


\section{Introduction}

Subgradient- and gradient-based methods for convex optimization have been actively investigated in the last decades, providing efficient solutions for large-scale optimization problems which arise from image/signal processing, data mining, statistics, {\it etc}.
The efficiency of (sub)gradient-based methods are often analyzed from the viewpoint of oracle complexity \cite{NY79,Nes04} to ensure a given absolute accuracy $\varepsilon>0$ for the optimal value, and so far various ``optimal" methods are known for several classes of problems. Achieving the optimal complexity for subgradient methods usually requires a priori problem specific information; sometimes, however, we can attain optimal or nearly optimal complexity with less such requirements (but we may need some restrictions for their implementations).

The following two classes of convex problems have been particularly well studied: 
\begin{itemize}
\item {\it Non-smooth problems}. The problems of minimizing Lipschitz continuous convex functions with bounded subgradients;
\item {\it Smooth problems}. The problems of minimizing continuously differentiable convex functions with Lipschitz continuous gradients.
\end{itemize}
These two classes of convex problems can also be reformulated as {\it structured convex problems}, which have been receiving much attention in terms of both theoretical and application aspects. In particular, studies of (sub)gradient-based methods for the class of ``smoothable" functions \cite{ASS15,BT12,CJN13,Lan14a,Nes05s,Nes05e}, the class of composite problems \cite{ASS15,BT09,CLP12,FM81,GL12,GL13,Lan12,Nes13,Tseng08,Tseng10}, and the class of weakly smooth problems \cite{DGNs,DGN,Nes15u,Nes15} are notably important.

In this paper, we particularly focus on the following two kinds of (sub)gradient methods: the {\it Proximal (sub)Gradient Method (PGM)} and the {\it Conditional Gradient Method (CGM)}. Both methods may require easy-to-solve subproblems at each iteration.

The PGM is executed using a {\it prox-function} to define a reasonable proximal operator.
Based on the conceptual complexity of Nemirovski and Yudin \cite{NY79}, many important PGMs for the above classes of convex problems can be proposed and their optimal convergence can be achieved. As it will be pointed out in this paper, many of PGMs are modifications, accelerations, and/or combinations of two remarkably important PGMs, namely, the {\it Mirror-Descent Method (MDM)} \cite{BT03,NY79} and the {\it Dual-Averaging Method (DAM)} \cite{Nes09}, which are optimal for non-smooth problems.

The CGMs, on the other hand, are endowed by subproblems which are linear, {\it i.e.}, problems of minimizing a linear functional over a bounded convex feasible set.
Originating from Frank and Wolfe \cite{FW56}, convergence properties of CGMs are well analyzed (see \cite{DR70,DH78,FG14,Lan14a,Nes15,PD78} and references therein).
Because of their advantages such as easiness of subproblems and sparsity of approximate solutions, CGMs are actively studied with applications to machine learning and statistics \cite{CJN13,HJN15,Jag11,Jag13}; it is important to note that the CGMs have worse convergence rates than the PGMs, but the computational cost of each iteration of the former can be lower, compensating the overall cost. Therefore, it is extremely important to choose between the PGM or the CGM depending on the structure of the problem to solve.

In a recent work \cite{IF14}, a unifying framework of PGMs were proposed through a unifying treatment of the MDM and the DAM for non-smooth problems, and also for their corresponding accelerations \cite{Tseng08,Tseng10} for smooth (and structured) problems. 
This unifying framework enables one to generate a {\it family} of (optimal) subgradient methods which includes several existing optimal methods. Also it permits to analyze both the classical PGMs (\ie, the MDM and the DAM) for non-smooth problems and their accelerations for smooth problems under the same framework, whereas existing analysis for them were performed individually.
It is important to observe that  if we do not restrict the discussion to the MDM and the DAM, other universal optimal complexity methods were previously proposed for both non-smooth and smooth problems as well \cite{DGNs,DGN,GL12,GL13,Lan12,Nes15u}.

The work \cite{IF14}, however, focused only on PGMs and was developed without assuming the strong convexity of objective functions.
Using the knowledge of a strong convexity can help us to obtain much faster rate of convergence.
For instance, the MDM \cite{Bach15,NB,NL14} for non-smooth problems and Nesterov's PGMs \cite{Nes04,Nes13} for smooth (or composite) problems realize the optimal complexity in the strongly convex cases. Moreover, exploiting multistage procedures is a powerful approach to obtain optimal PGMs \cite{CLP12,GL13,JN14,NN85,Nes83,Nes13}.
However, the multistage procedures require a priori knowledge of an upper bound of the distance between the initial point and the optimal solution set.
Note that the optimal complexity of the DAM for non-smooth problems and of the Tseng's PGM for smooth problems are not known without the multistage procedure (see Sections \ref{ssec-existing-nonsm}, \ref{sssec-existing-struc}).

This paper proposes a new unifying framework of PGMs and CGMs for convex problems with strongly convex objective functions and its convergence analysis for both non-smooth and smooth problems.
The smooth problems become particular cases of structured problems by employing the generalized notion of the {\it inexact oracle model} \cite{DGNs,DGN}. It also enables us to handle simultaneously the weakly smooth problems.
The proposed methods require a priori knowledge of the convexity parameter of the objective function, while an upper bound for the distance between the initial point and the optimal solution set is not necessary to ensure the optimal convergence rate with respect to the iteration number.

We emphasize three particular contributions of this paper.

At first, the unifying framework yields generalizations of the MDM and the DAM originally proposed for non-smooth problems, and of Nesterov's and Tseng's optimal PGMs originally proposed for smooth (or composite) problems. As a consequence, the optimal convergence of the DAM and Tseng's PGMs for the strongly convex cases are new since the existing results were analyzed only for the non-strongly convex cases (Sections~\ref{ssec-eff-nonsm}, \ref{ssec-eff-modif-struc}). Our unifying framework also includes the {\it classical gradient methods} \cite{DGNs,Nes13} which were previously analyzed in the strongly convex case. However, our analysis provides a slightly improved convergence estimates for them (Section~\ref{ssec-eff-clas-struc}).

Secondly, a new family of CGMs can be obtained from the unifying framework, which includes the Lan's CGMs \cite{Lan14a}, and yields an optimal convergence result for smooth problems in the non-strongly convex case (Section~\ref{ssec-eff-modif-struc}); we further prove nearly optimal convergence rates of the proposed CGMs for the classes of weakly smooth problems (Section~\ref{sssec-CGM-weak}).  The advantage of our unifying framework is  a universal analysis of the PGMs and the CGMs.

Finally, we prove that our PGMs (including generalizations of Nesterov's and Tseng's PGMs) attains the optimal convergence rate for weakly smooth and strongly convex problems (and for further extended problems of the deterministic case of \cite{GL12}, Section~\ref{sssec-convergence-st}). We remark that the original Nesterov's and Tseng's PGMs were analyzed for smooth (or composite) problems only.
In contrast to the existing optimal method \cite{NN85}, our PGMs ensure the optimality with less a prior information for the objective function.

The current work can be seen as an extension of the recent work \cite{IF14}. The above mentioned three new contributions are particular consequences of the extension. In particular, the previous one \cite{IF14} can not consider the CGMs and the strongly convex cases. Moreover, we extended the structured problems of \cite{IF14} so that we can now handle weakly smooth problems efficiently.

Another extension from \cite{IF14} is that our framework (Property \ref{framework-double}) handles two kinds of auxiliary subproblems at each iterations which allows us to yield new variations of subgradient method including the Nesterov's method in \cite{Nes05s}.

This paper is organized as follows. We firstly discuss some general considerations about strongly convex problems in Section \ref{sec-background}. In particular, in Section \ref{ssec-problem-struc}, we introduce a kind of ``strong convexity" with respect to the prox-function and define the classes of {\it non-smooth} and of {\it structured} problems considered in this paper. We list some existing methods in the remaing part. We propose the {\it unified framework of subgradient-based methods} and general guidelines for constructing subproblems in Section \ref{sec-framework}. We analyze the proposed general (sub)gradient methods and establish general convergence results in Section \ref{sec-general-convergence}. Finally, in Section \ref{sec-convergence}, we discuss the rate of convergences for the non-smooth and the structured problems providing the (nearly) optimal complexity for them.

\section{Problem settings and existing methods}
\label{sec-background}


\subsection{Convex optimization problem and assumptions}
\label{ssec-problem-struc}

Let us consider the following convex optimization problem:
\begin{equation}\label{primal-problem}
\min_{x \in Q}f(x)
\end{equation}
where $Q$ is a closed convex subset of a finite dimensional real normed space $E$ equipped with a norm $\norm{\cdot}$, and $f:E \to \Real\cup\{+\infty\}$ is a lower-semicontinuous (lsc) convex function with $Q \subset \dom{f}$. We denote by $E^*$ the dual space of $E$ equipped with the dual norm $\norm{s}_* = \max_{\norm{x}\leq 1}\innprod{s}{x}$ for $s\in E^*$ where $\innprod{s}{x}$ is the value of $s \in E^*$ at $x \in E$.
We always assume that the problem (\ref{primal-problem}) has an optimal solution $x^* \in Q$.

Throughout this paper, we mainly focus on two particular classes of convex optimization problems (\ref{primal-problem}), the {\it class of non-smooth problems} and the {\it class of structured problems}, which will be defined shortly.

We introduce a {\it prox-function} $d(x)$ on the feasible set $Q$, that is, $d:E\to \Real\cup\{+\infty\}$ is a nonnegative, continuously differentiable, and strongly convex function on $Q$ (therefore, $Q \subset \dom{d}$) with a constant $\sigma_d > 0$ such that $d(x_0)=\min_{x \in Q}d(x)=0$ for the unique minimizer $x_0 \in Q$.
We use the notation $l_d(y;x):=d(y)+\innprod{\nabla{d}(y)}{x-y}$ for the linearization of $d(x)$ at $y \in Q$.
We also define the {\it Bregman distance} \cite{Breg} between $x$ and $y$ for $x,y \in Q$ by
\[ \xi(y,x) :=  d(x)-d(y)-\innprod{\nabla{d}(y)}{x-y} = d(x)-l_d(y;x).\]
Note that the strong convexity of $d(x)$ on $Q$ is equivalent to the property $\xi(y,x)\geq \frac{\sigma_d}{2}\norm{x-y}^2,~\forall x ,y \in Q$.
The prox-function as well as the Bregman distance will be used for the construction of auxiliary functions in the subproblems solved at each iterations in the methods described in this paper. We also assume that the prox-function $d(x)$ is fixed throughout the paper. A simple example for $d(x)$ is the {\it Euclidean setting}, namely, $E$ is a Euclidean space with $\norm{x}_2=\innprod{x}{x}^{1/2}$, and $d(x) = \frac{1}{2}\norm{x-x_0}_2^2$ for some $x_0 \in Q$.

For a lsc convex function $\psi:E\to\Real\cup\{+\infty\}$ with $Q \subset \dom{\psi}$,  we introduce the set 
\[\sigma(\psi):=\{\tau \geq 0 ~:~ \psi(x)-\tau d(x) \textrm{ is a lsc convex function on } Q\}.\]
The set $\sigma(\psi)$ corresponds to the set of ``convexity parameters" of $\psi(x)$ on $Q$ with respect to the prox-function $d(x)$.
In the Euclidean setting $d(x)=\frac{1}{2}\norm{x-x_0}_2^2$, the set $\sigma(\psi)$ is the set of convexity parameters of $\psi(x)$ in the usual sense.
Furthermore, in general, it can be shown that $\tau \in \sigma(\psi)$ if and only if the following inequality holds:
\begin{equation}\label{st-conv-Breg}
\psi(x) \geq \psi(y)+\psi'(y;x-y)+\tau\xi(y,x),\quad \forall x,y \in Q~ (\subset \textrm{dom}\psi),
\end{equation}
where $\psi'(x;d)=\lim_{\alpha\downarrow 0}\frac{\psi(x+\alpha d)-\psi(x)}{\alpha}~(x\in \dom{\psi},~d \in E)$
\footnote{Notice that the function $\varphi(x):=\psi(x)-\tau d(x)$ satisfies $\varphi'(y;x-y)=\psi'(y;x-y)-\tau\innprod{\nabla{d}(y)}{x-y}$, $\forall x,y \in Q$. Hence, the convexity of $\varphi(x)$ on $Q$ implies $\varphi(x)\geq \varphi(y)+\varphi'(y;x-y),\forall x,y \in Q$, which is equivalent to (\ref{st-conv-Breg}). Conversely, since $\psi'(y;x-y)\geq -\psi'(y;y-x)$ holds and so is true for $\varphi(\cdot)$ for $x,y \in Q$, (\ref{st-conv-Breg}) implies the two inequalities $\varphi(y)\geq \varphi(z)+\varphi'(z;y-z)$ and $\varphi(x)\geq \varphi(z)-\varphi'(z;z-x)$ for $x,y,z \in Q$. Since $\varphi'(y;\cdot)$ is positively homogeneous, the convexity of $\varphi(\cdot)$ on $Q$ follows by taking a convex combination of the two with $z=\alpha x + (1-\alpha)y,\alpha \in [0,1],x,y \in Q$.}.
This form is similar to the characterization of the usual strong convexity of $\psi(x)$ on $Q$ with constant $\tau\geq 0$: $\psi(x)\geq \psi(y)+ \psi'(y;x-y)+\frac{\tau}{2}\norm{x-y}^2,~\forall x,y \in Q$.
Therefore, $\tau \in \sigma(\psi)$ implies the usual strong convexity of $\psi(x)$ on $Q$ with constant $\tau\sigma_d$, since $\xi(y,x) \geq \frac{\sigma_d}{2}\norm{x-y}^2,~\forall x,y \in Q$. On the other hand, if the Bregman distance $\xi(y,x)$ {\it grows quadratically} on $Q$ with a constant $A > 0$ (see \cite{GL12}), \ie, $\xi(y,x)\leq \frac{A}{2}\norm{x-y}^2,~\forall x,y \in Q$, then the usual strong convexity of $\psi(x)$ on $Q$ with a constant $\tau\geq 0$ implies $\tau/A \in \sigma(\psi)$.

We assume a ``strong convexity" of the objective function $f(x)$ by supposing that $\sigma(f)\setminus\{0\}\not=\emptyset$.
However, in order to deal with several structured optimization problems as we will see in Section \ref{ssec-existing-struc}, we need to assume stronger conditions on the objective function as follows.
Let us assume that, for each $y \in Q$, there exists a lsc convex function $m_f(y;\cdot):E \to \Real\cup\{+\infty\}$ such that $m_f(y;x) \leq f(x)$ for all $x \in Q$; we call the function $m_f(y;x)$ a {\it lower approximation model of $f(x)$}. We further assume that there exists a convexity parameter $\sigma_f \geq 0$ such that
\begin{equation}\label{s-conv-assump}
\sigma_f \in \sigma(f)\cap\bigcap_{y \in Q}\sigma(m_f(y;\cdot)).
\end{equation}
Note that, since $f'(x^*;x-x^*)\geq 0$ holds for all $x \in Q$ by the optimality of $x^*$, the condition $\sigma_f\in\sigma(f)$ implies that $f(x)-f(x^*)\geq \sigma_f\xi(x^*,x)$ for all $x \in Q$.

The function $m_f(y;x)$ can be seen as a strongly convex lower approximation of $f(x)$ at $y \in Q$, and its construction depends on the problem structure.
Notice also that the condition (\ref{s-conv-assump}) is not as restrictive as it is apparent to be specially if the problem (\ref{primal-problem}) is provided by some structure. 

The convex optimization problem (\ref{primal-problem}) which we consider in this paper will be particularized into the following two classes for convenience.

\begin{defi} 
The {\it class of non-smooth problems} consists of convex optimization problems (\ref{primal-problem}) where we assume for each problem that we know a subgradient mapping $g(x) \in \partial{f}(x),~x \in Q$ and a convexity parameter $\sigma_f \in \sigma(f)$.
Then, we can naturally define its lower approximation model $m_f(\cdot;\cdot)$ by
\begin{equation}\label{nonsm-m_f}
m_f(y;x):=f(y)+\innprod{g(y)}{x-y}+\sigma_f\xi(y,x).
\end{equation}
Therefore, it satisfies (\ref{s-conv-assump}). Moreover, we assume that for every $s \in E^*$ and $\beta > 0$, the following optimization problem is solvable:
\begin{equation}\label{subprob-simple}
\min_{x \in Q}\{\innprod{s}{x}+\beta d(x)\}.
\end{equation}
This class of problems is denoted by $\mathcal{NSP}(g,\sigma_f)$.
\end{defi}

Notice that non-smooth problems satisfy the requirement (\ref{s-conv-assump}) because $m_f(y;x)-\sigma_f d(x)$ becomes an affine function. For convenience, we denote $g_k:=g(x_k) \in \partial{f}(x_k)$ for test points $x_k$.

\begin{defi} 
The {\it class of structured problems} consists of convex optimization problems (\ref{primal-problem}) where we assume for each problem that there exists $(m_f(\cdot;\cdot),\sigma_f,\bar\sigma_f,L(\cdot),\delta(\cdot,\cdot))$, {\it i.e.}, functions and constants, satisfying the inequality
\begin{equation}\label{struc-ineq}
f(x) \leq [m_f(y;x)-\bar\sigma_f\xi(y,x)]+\frac{{L}(y)}{2}\norm{y-x}^2 + \delta(y,x),\quad \forall x,y \in Q,
\end{equation}
where $m_f(\cdot;\cdot)$ is a lower approximation model of $f(\cdot)$ which admits (\ref{s-conv-assump}) for $\sigma_f \geq 0$, $\delta(y,\cdot)$ is a nonnegative convex function on $Q$ for $y \in Q$, $L(\cdot)\geq 0$, and $\bar\sigma_f \in [0,\sigma_f]$.
We further assume that for every $\beta \geq 0$,~$y \in E$ and $s\in E^*$, the optimization problems of the following form is efficiently solvable:
\begin{equation}\label{subprob-struc}
\min_{x \in Q}\{m_f(y;x) + \innprod{s}{x} + \beta d(x) \}.
\end{equation}
This class of problems is denote by $\mathcal{SP}(m_f,\sigma_f,\bar\sigma_f,L,\delta)$.
\end{defi}

Examples of such structured problems will be presented in Section \ref{sssec-examples-struc}.

The optimization problem (\ref{subprob-struc}) in the class of structured problems may differ from (\ref{subprob-simple}) in the class of non-smooth ones depending on how we choose the functions $m_f(\cdot;\cdot)$ (\eg, see the example (ii) in Section \ref{sssec-examples-struc}).

Note that when $\beta = 0$ and $\sigma_f=0$, problem (\ref{subprob-struc}) may be a minimization of a convex function which is non-strongly convex, in particular, an affine function on $Q$. In this case, we additionally assume the boundedness of $Q$ to ensure the existence of its solution. This is the case for the conditional gradient methods.

After developing a general analysis in Section \ref{sec-general-convergence}, the function $\delta(y,x)$ will be finally particularized for the constant case $\delta(y,x)\equiv \delta$ in Sections \ref{ssec-eff-clas-struc}, \ref{ssec-eff-modif-struc}, and for the case $\delta(y,x):=\frac{M}{\rho}\norm{x-y}^\rho,~M\geq 0,~\rho\in[1,2)$ in Section \ref{ssec-eff-modif-weak} (see Section \ref{ssec-existing-struc} for several examples and related works). Note that, when $\delta(y,x)\equiv\delta$ and $\sigma_f=0$, the structured problem is equivalent to the one introduced in \cite[Section 5]{IF14}.


\subsection{Existing methods for non-smooth problems}
\label{ssec-existing-nonsm}

Consider the non-smooth problems in the class $\mathcal{NSP}(g,\sigma_f)$. We assume for the moment that the subgradient mapping $g(x) \in \partial{f}(x)$ of $f(x)$ is bounded, \ie, there exists $M>0$ such that
\begin{equation}\label{subgrad-bounded}
\norm{g(x)}_* \leq M,\quad\forall x \in Q.
\end{equation}
Let us first consider the case $\sigma_f=0$. The original MDM and DAM, which solve this class of problems, are known to be optimal PGMs. Considering the notation in \cite[Method 9(a)]{IF14}, they are particular cases of the following procedure:
\begin{equation}\label{existing-SGM}
x_0:=\sz_{-1}:=\argmin_{x \in Q}d(x),\quad x_{k+1}:=\sz_k,\quad k\geq 0,
\end{equation}
where $\sz_k$ is the solution of the following fixed subproblem either from the {\it extended Mirror-Descent (MD) model}
\begin{equation}\label{eMD-subprob-nonst}
\min_{x \in Q}\{\lambda_k m_f(x_k;x)+\beta_kd(x)-\beta_{k-1}l_d(\sz_{k-1};x)\},
\end{equation}
or from the {\it Dual-Averaging (DA) model}
\begin{equation}\label{DA-subprob-nonst}
\min_{x \in Q}\left\{\sum_{i=0}^k\lambda_im_f(x_i;x)+\beta_kd(x)\right\},
\end{equation}
where $\{\lambda_k\}_{k\geq 0}$ and $\{\beta_k\}_{k \geq -1}$ are positive parameters called {\it weight} (or {\it step-size}) and {\it scaling parameters}, respectively; recall that $m_f(y;x)=f(y)+\innprod{g(y)}{x-y}$ by the definition (\ref{nonsm-m_f}) if $\sigma_f=0$.

The MDM, originally proposed by Nemirovski and Yudin \cite{NY79} and related to proximal subgradient methods by Beck and Teboulle \cite{BT03}, corresponds to the method (\ref{existing-SGM}) with the update (\ref{eMD-subprob-nonst}) letting $\beta_k \equiv 1$.
On the other hand, the method (\ref{existing-SGM}) with the update (\ref{DA-subprob-nonst}) yields the original DAM proposed by Nesterov \cite{Nes09}.
Tuning the scaling parameter $\{\beta_k\}$ enables us to obtain an efficient convergence rate (see \cite{IF14,Nes09}); for instance, taking $\lambda_k=1$ and $\beta_k=O(\sqrt{k})$ yields that $f(\hat{x}_k)-f(x^*) \leq O(1/\sqrt{k})$ where $\hat{x}_k:=\sum_{i=0}^k\lambda_ix_i/\sum_{i=0}^k\lambda_i$.
In this case, one needs the values $d(x^*)$ and $M$ to define $\lambda_k$ and/or $\beta_k$ to achieve the optimal iteration complexity $O(M^2d(x^*)/(\sigma_d\varepsilon^2))$ for an absolute accuracy $\varepsilon>0$.

When $\sigma_f>0$ is known, the extended MDM also admits the optimal complexity $O(M^2/(\sigma_d\sigma_f\varepsilon))$ for the strongly convex case by choosing
$
\lambda_k := \frac{2}{\sigma_f(k+2)},~ \beta_k:=1
$
(\cite[Theorem 1]{NL14}; see also \cite{Bach15,NB} for related results).
Moreover, it is proved that a multistage procedure for the DAM achieves the optimal complexity for problems of minimizing {\it uniformly convex functions}, a generalization of strongly convex ones, with further consideration in a stochastic setting \cite{JN14}.

As we mention next, an extended class of problems including non-smooth and smooth ones are considered in \cite{GL12,GL13,NN85,Nes15u} which propose optimal PGMs for these problems and therefore for the non-smooth problems as well.

\subsection{Examples and existing methods for structured problems}
\label{ssec-existing-struc}


\subsubsection{Examples of structured problems}
\label{sssec-examples-struc}

The class $\mathcal{SP}(m_f,\sigma_f, \bar\sigma_f, L, \delta)$ of structured problems introduced in Section \ref{ssec-problem-struc} includes several special convex problems that are also possibly non-smooth. We list some existing examples and results which can be discussed in this setting considering the requirements (\ref{s-conv-assump}) and (\ref{struc-ineq}).

\begin{enumerate}
\renewcommand{\labelenumi}{(\roman{enumi})}

\item
{\it Smooth problems.}
Suppose that $f(x)$ belongs to $C_L^{1,1}(Q)$; that is, $f(x)$ is continuously differentiable on $Q$ and $\nabla{f}(x)$ satisfies the Lipschitz condition on $Q$ with constant $L > 0$: $\norm{\nabla{f}(x)-\nabla{f}(y)}_* \leq L\norm{x-y},~\forall x,y \in Q$. When we know a constant $\sigma_f \in \sigma(f)$, we can define
\[ m_f(y;x) := f(y) + \innprod{\nabla{f}(y)}{x-y} + \sigma_f\xi(y,x) \]
to obtain (\ref{s-conv-assump}) and (\ref{struc-ineq}) with $L(\cdot):=L$, $\bar\sigma_f:=\sigma_f$, and $\delta(\cdot,\cdot):=0$. The corresponding subproblem (\ref{subprob-struc}) reduces to the form (\ref{subprob-simple}). 

The smooth problem with the Euclidean setting $d(x)=\frac{1}{2}\norm{x-x_0}_2^2$ is the most basic one among the examples here; in this case, the lower complexity bounds $O(\sqrt{Ld(x^*)/\varepsilon})$ for the case $\sigma_f=0$ and $O(\sqrt{L/\sigma_f}\log(1/\varepsilon))$ for the case $\sigma_f>0$ are known for an absolute accuracy $\varepsilon>0$. The first optimal PGM for the Euclidean case was proposed by Nesterov \cite{Nes83} and its variants were developed in \cite{Nes04}, and in \cite{AT06,Nes05s} for non-strongly convex cases.

CGMs are also considered for the smooth problems, which achieve the complexity $O(LR/\varepsilon)$ where $R:={\rm Diam}(Q)=\sup_{x,y\in Q}\norm{x-y}$ \cite{DR70,DH78,FW56,FG14,Lan14a,PD78}; excepting Lan's modified CGMs \cite{Lan14a}, all of these CGMs are based on the classical CGM \cite{FW56}, as we show in the algorithm (\ref{classical-CGM}).

\item
{\it Composite problems.}
Consider an objective function $f(x)$ of the form $f(x) = f_0(x)+\varPsi(x)$ where $f_0 \in C_L^{1,1}(Q)$ and $\varPsi(x)$ is a lsc convex function on $Q$ with a simple structure. If we know constants $\sigma_{f_0}\in \sigma(f_0)$ and $\sigma_\varPsi \in \sigma(\varPsi)$, then, we can take
\[m_f(y;x):=f_0(y)+\innprod{\nabla{f_0}(y)}{x-y} + \sigma_{f_0}\xi(y,x)+\varPsi(x)\]
from which (\ref{s-conv-assump}) and (\ref{struc-ineq}) hold with $\sigma_f:=\sigma_{f_0} + \sigma_\varPsi,~~L(\cdot):=L,~~\bar{\sigma}_f:=\sigma_{f_0}$, and $\delta(\cdot,\cdot):=0$.
There are many PGMs for this problem \cite{FM81,BT09,Nes13,Tseng08,Tseng10} and they provide the same iteration complexity as the lowest complexity for the smooth problems in the non-strongly convex case (excepting the work by Fukushima and Mine \cite{FM81} because they studied this model without assuming the convexity for $f_0(x)$). Nesterov \cite{Nes13} further proposed an optimal method for strongly convex composite problems in the Euclidean setting.
The smoothing technique proposed by Nesterov \cite{Nes05s} and its extension \cite{BT12} for a special form of $\varPsi(x)$ are also important because of their significant advantage in efficiency, which have further consideration in the strongly convex case \cite{Nes05e}.

A generalization of CGM to the composite problems was investigated in \cite{ASS15,Bach15} which also deal with a duality relationship to the MDM.

\item
{\it Inexact oracle model}.
Suppose that $f(x)$ is equipped with a {\it first-order $(\delta,L,\mu)$-oracle} \cite{DGNs}, \ie, for each $y \in Q$, we can compute $(f_{\delta,L,\mu}(y), g_{\delta,L,\mu}(y)) \in \Real\times E^*$ such that
\[ \frac{\mu}{2}\norm{x-y}^2 \leq f(x)-(f_{\delta,L,\mu}(y)+\innprod{g_{\delta,L,\mu}(y)}{x-y}) \leq \frac{L}{2}\norm{x-y}^2+\delta,\quad \forall x \in Q,\]
where $\delta \geq 0$ and $L\geq \mu \geq 0$.
If $\mu=0$ or the prox-function grows quadratically on $Q$ with constant $A > 0$, then defining
\[ m_f(y;x):=f_{\delta,L,\mu}(y)+\innprod{g_{\delta,L,\mu}(y)}{x-y}+\frac{\mu}{A}\xi(y,x), \]
admits (\ref{s-conv-assump}) and (\ref{struc-ineq}) with $L(\cdot):=L$, $\sigma_f:=\bar\sigma_f:=\mu/A$, and $\delta(\cdot,\cdot):=\delta$.
The inexact oracle model with $\mu=0$ was firstly studied by Devolder \etal\, \cite{DGN} and they proposed the classical and the fast (proximal) gradient methods which were extended to the strongly convex case in \cite{DGNs}. A CGM for this model in the case $\mu=0$ was analyzed by \cite{FG14}.

\item
{\it Weakly smooth problems.}
Suppose that the objective function $f(x)$ belongs to $C_M^{1,\nu}(Q)$ for some $\nu \in [0,1)$, \ie, $f(x)$ is continuously differentiable on $Q$ and $\nabla{f}(x)$ satisfies the H\"older condition $\norm{\nabla{f}(x)-\nabla{f}(y)}_*\leq M\norm{x-y}^{\nu},~\forall x,y \in Q$; but in the case $\nu=0$, we do not assume the smoothness for $f(x)$ and we understand $\nabla{f}(x)$ as an element in $\partial{f}(x)$.
Since the H\"older condition implies the inequality
\begin{equation} 
f(x)-f(y)-\innprod{\nabla{f}(y)}{x-y}
\leq \frac{M}{1+\nu}\norm{x-y}^{1+\nu},\quad \forall x,y \in Q,
\end{equation}
defining $m_f(y;x)$ as (i) for $\sigma_f \in \sigma(f)$, it admits (\ref{s-conv-assump}) and (\ref{struc-ineq}) with $L(\cdot):=0$, $\bar\sigma_f:=\sigma_f$, and $\delta(\cdot,\cdot):=\frac{M}{1+\nu}\norm{x-y}^{1+\nu}$. The weakly smooth version of the composite and the saddle structures can also be considered in the same way.

For the weakly smooth problems, Nemirovski and Nesterov \cite{NN85} (see also \cite[Section 2.3]{Els93}) proposed an optimal PGM with the (optimal) complexity bounds
\begin{equation}
\label{lower-compl-weak}
c_1(\rho)\left(\frac{M}{\varepsilon}\right)^{\frac{2}{3\rho-2}}\left(\frac{d(x^*)}{\sigma_d}\right)^{\frac{\rho}{3\rho-2}} ~~~\textrm{and}~~~ c_2(\rho)\left(\frac{M^2}{\sigma^{\rho}}\frac{1}{\varepsilon^{2-\rho}}\right)^{\frac{1}{3\rho-2}},
\end{equation}
for non-strongly and strongly convex cases, respectively, where $\rho:=1+\nu \in [1,2)$, $c_1(\cdot),c_2(\cdot)$ are continuous functions, and $\sigma>0$ is a convexity parameter of $f$ with respect to the norm $\norm{\cdot}$; the proposed method is further applicable for more general classes of problems.  Moreover, Nesterov \cite{Nes15u} improved a restriction of the method in the non-strongly convex case in the sense that the proposed method ensures the optimal convergence rate without fixing the iteration number. It is important to note that the methods proposed by \cite{NN85} and \cite{Nes15u} can achieve the above complexity of iterations for non-strongly convex case even if we do not know $M$ and $\nu$ while the proposed method here needs an additional (but relatively small) ``cost" for estimating $M$. This approach can be also seen in \cite{BT09,Nes83,Nes13} for an estimation of the Lipschitz constant $M$ in the case $\nu=1$.
The studies \cite{DGNs,DGN} of the inexact oracle model are also important; they proposed an optimal method for weakly smooth problems in the non-strongly convex case and a sub-optimal one in the strongly convex case (PGMs for uniformly convex functions are also discussed).

A convergence result for CGMs for this class can be also obtained in the same way as the smooth problems which ensures the complexity $O((MR/\varepsilon)^{1/\nu})$ where $R:={\rm Diam}(Q)$ (see \cite[Proposition 1.1]{CJN13} and \cite{Nes15}).

\item
The objective functions in (i) and (iv) can be simultaneously considered by assuming
\[ f(y)-f(x)-\innprod{g(y)}{y-x} \leq \frac{L}{2}\norm{y-x}^2+\frac{M}{\rho}\norm{y-x}^\rho,\quad \forall x,y \in Q, \]
for a subgradient mapping $g(x) \in \partial{f}(x)$, $L,M \geq 0$, and $\rho \in [1,2)$. When $\sigma_f \in \sigma(f)$, we can take $m_f(y;x):=f(y)+\innprod{g(y)}{x-y}+\sigma_f\xi(y,x)$ to obtain (\ref{s-conv-assump}) and (\ref{struc-ineq}) with $L(\cdot):=L,~\bar\sigma_f:=\sigma_f$, and $\delta(y,x):=\frac{M}{\rho}\norm{y-x}^{\rho}$.
When $\sigma_f=0$ or the prox-function grows quadratically on $Q$, (nearly) optimal PGMs for this model in the case $\rho=1$ are studied in \cite{CLP12,GL12,GL13,Lan12,Lan14b} with a stochastic setting. 

\end{enumerate}


\subsubsection{Existing methods for structured problems}
\label{sssec-existing-struc}

We finally describe some particular PGMs and CGMs which will be important for the comparison with the proposed methods in the paper.
For that, we introduce two kinds of update formulas of gradient-based methods.

The first is the Classical Gradient Method \cite[Method 16]{IF14}, which performs as follows: For given weight $\{\lambda_k\}_{k \geq 0}$ and scaling parameters $\{\beta_k\}_{k \geq -1}$, generate $\{\sz_k\}_{k \geq -1}$ and $\{x_k\}_{k \geq 0}$ by the update (\ref{existing-SGM}) with the model (\ref{eMD-subprob-nonst}) or (\ref{DA-subprob-nonst}), and set $\{\hat{x}_k\}_{k \geq 0}$ by $\hat{x}_k=\sum_{i=0}^k\lambda_ix_i/\sum_{i=0}^k\lambda_i$.
The {\it primal} and {\it dual gradient methods} in \cite{Nes13} for the composite problems (ii) and in \cite{DGN} for the inexact oracle model (iii) are closely related to this algorithm in the non-strongly convex case. A further relation in the strongly convex case will be presented in this paper.

The second, the Fast Gradient Method (FGM) \cite[Method 17]{IF14}, is described as follows: For given weight $\{\lambda_k\}_{k \geq 0}$ and scaling parameters $\{\beta_k\}_{k \geq -1}$, set $x_0:=\z_{-1}:=\argmin_{x \in Q}d(x)$, $\hat{x}_0:=\z_0$ and, for $k \geq 0$, iterate
\begin{equation}\label{FGM-single}
\begin{array}{rcl}
x_{k+1}&:=&(1-\tau_k)\hat{x}_k+\tau_k\z_k,\quad \textrm{where } \tau_k:=\frac{\lambda_{k+1}}{\sum_{i=0}^{k+1}\lambda_i},\\
\hat{x}_{k+1}&:=&(1-\tau_k)\hat{x}_k+\tau_k\z_{k+1},
\end{array}
\end{equation}
where $\z_k$ is determined by the fixed subproblem either the extended MD model (\ref{eMD-subprob-nonst}) or the DA model (\ref{DA-subprob-nonst}).
It was indicated in \cite{IF14} that the FGM with $\lambda_0:=1,\lambda_{k+1}:=\frac{1+\sqrt{1+4\lambda_{k}^2}}{2}~(k \geq 0)$, and $\beta_k\equiv L/\sigma_d$ yields Tseng's accelerated PGMs \cite{Tseng10} for the composite problems which achieve the convergence rate $f(\hat{x}_k)-f(x^*) \leq O(Ld(x^*)/(\sigma_d k^2))$ yielding the optimal complexity $O(\sqrt{Ld(x^*)/(\sigma_d\varepsilon)})$ as (i) in the non-strongly convex case.

Furthermore, the algorithm (\ref{FGM-single}) is also closely related to the following PGM and CGM, which will be unified in the framework of this paper:
\begin{itemize}
\item
Replacing the second update in (\ref{FGM-single}) by $\hat{x}_{k+1}:=(1-\tau_k)\hat{x}_k+\tau_k\w_{k+1}$, determining $\w_k$ and $\z_k$ by (\ref{eMD-subprob-nonst}) and (\ref{DA-subprob-nonst}) with $\beta_k:=L/\sigma_d$, respectively, the corresponding method with $\lambda_k:=(k+1)/2$ yields the Nesterov's optimal PGM \cite[Section 5.3]{Nes05s} for the smooth problems in the non-strongly convex case.
We remark that the achievement of the optimal complexity of the FGM and this Nesterov's PGM in the strongly convex case are not known without using multistage procedure; in the Euclidean setting, it turns out that a multistage procedure for them attains the optimal complexity $O(\sqrt{L/\sigma_f}\log(1/\varepsilon))$ in the strongly convex case (see, \eg, \cite[Section 5.1]{Nes13})%
\footnote{In fact, since they have the convergence rate $f(\hat{x}_k)-f(x^*)\leq \frac{cL\norm{x_0-x^*}_2^2}{2k^2}$ for a constant $c>0$, after $k\geq \sqrt{2cL/\sigma_f}$ iterations, we have $f(\hat{x}_k)-f(x^*)\leq \frac{\sigma_f}{4}\norm{x_0-x^*}_2^2\leq \frac{1}{2}(f(x_0)-f(x^*))$ by the strong convexity of $f$ and the optimality of $x^*$. Then repeating $O(\log_2(1/\varepsilon))$ times of restarting the method every $\sqrt{2cL/\sigma_f}$ iterations, it ensures an $\varepsilon$-solution.}.
\item Letting $\lambda_k:=(k+1)/2$ and assuming the boundedness of $Q$, the algorithm (\ref{FGM-single}) with the subproblems (\ref{eMD-subprob-nonst}) and (\ref{DA-subprob-nonst}) with $\beta_k\equiv 0$ corresponds to Lan's modified CGMs, Algorithms 4 and 5, respectively, in \cite{Lan14a} with the stepsize policy $\alpha_k:=2/(k+1)$ and $\theta_k:=k$.

\end{itemize}

On the other hand, the classical CGM \cite{DR70,FW56,PD78} for smooth problems is basically performed as follows: Choose $x_0 \in Q$ and, for $k \geq 0$, iterate 
\begin{equation}
\label{classical-CGM}
\sz_{k} \in \Argmin_{x \in Q}\innprod{\nabla{f}(x_k)}{x-x_k},\quad x_{k+1}:=(1-\tau_k)x_k+\tau_k\sz_{k},\quad  k\geq 0
\end{equation}
where $\tau_k \in [0,1)$ (we assume the boundedness of $Q$). Excepting the Lan's modified CGMs, all the above mentioned CGMs are based on this classical CGM.
Notice that the subproblem can be seen as the extended MD model (\ref{eMD-subprob-nonst}) with $\beta_k \equiv 0$.


\section{Unifying framework for (sub)gradient-based methods}
\label{sec-framework}

In this section we define the {\it unifying framework}, namely Methods \ref{gen-alg-nonsm} and \ref{gen-alg-struc} combined with Property \ref{framework-single} and \ref{framework-double}, which provides a generalization of some existing methods and new convergence results with a universal analysis.
The proposed methods require the computation of minimizer(s) $\z_k$ (and $\w_k$) of one or two auxiliary problem(s) at each iterations as the existing methods presented in Sections \ref{ssec-existing-nonsm} and \ref{sssec-existing-struc}.
In order to simplify the notation, we introduce {\it auxiliary functions} $\sphi_k(x)$ and $\apsi_k(x)$, and denote the minimizers of our subproblems as $\z_k:=\argmin_{x \in Q}\aphi_k(x)$ and $\w_k:=\argmin_{x \in Q}\apsi_k(x)$.

Now let us see how we proceed in specifying our (sub)gradient-based methods.
They are determined by the parameters $\{\lambda_k\}_{k \geq 0}$, $\{\beta_k\}_{k \geq -1}$, and functions $\{(\sphi_k(x),\apsi_k(x))\}_{k \geq -1}$, where
\begin{itemize}
\item $\{\lambda_k\}_{k\geq 0}$ is a sequence of positive real numbers, the {\it weight parameters},
\item $\{\beta_k\}_{k \geq -1}$ is a nondecreasing sequence of nonnegative real numbers, the {\it scaling parameters}, and
\item $\{(\sphi_k(x),\apsi_k(x))\}_{k \geq -1}$ is a {\it coupled sequence of auxiliary functions} which are minimized at each iterations.
\end{itemize}
We always assume that weight parameters are positive and that scaling parameters are nonnegative and nondecreasing.
Remark that these objects are possibly determined in a recursive manner during the methods.
Then our methods generate the following sequences in $Q$.
\begin{itemize}

\item $\{x_k\}_{k \geq 0}$ is the sequence of test points for which we evaluate $m_f(x_k;x)$.

\item $\{\z_k\}_{k \geq -1}$ is the sequence of solutions of subproblems $\min_{x \in Q}\aphi_k(x)$.

\item $\{\w_k\}_{k \geq -1}$  is the sequence of solutions of subproblems $\min_{x \in Q}\apsi_k(x)$.

\item $\{\hat{x}_k\}_{k \geq 0}$ is the sequence of approximate solutions for the problem (\ref{primal-problem}).
\end{itemize}
In view of our actual construction defined in Section~\ref{ssec-auxfunc-construc}, we suppose that the auxiliary functions $\{(\aphi_k(x),\apsi_k(x))\}_{k\geq -1}$ are constructed associated with weight parameters $\{\lambda_k\}_{k \geq 0}$, scaling parameters $\{\beta_k\}_{k \geq -1}$, and test points $\{x_k\}_{k \geq 0}$ in a recursive manner.
We often consider the case of a single sequence $\{\sphi_k(x)\}_{k \geq -1}$ of auxiliary functions which can be regarded as the case $\apsi_k(x) \equiv \aphi_k(x)$.

We will gradually specify the above general objects by giving explicit update formulas in three steps: The first is for the points $\{x_k\}_{k \geq 0}$ and $\{\hat{x}_k\}_{k \geq 0}$ by proposing general (sub)gradient-based methods (Section \ref{ssec-gen-subgrad-methods}), the second is for the auxiliary functions $\{(\aphi_k(x),\apsi_k(x))\}_{k \geq -1}$ used in the general methods (Section \ref{ssec-auxfunc-construc}), and the final is for the parameters $\{\lambda_k\}_{k \geq 0}$ and $\{\beta_k\}_{k \geq -1}$ to provide efficient convergences (Section \ref{sec-convergence}).


\subsection{General properties for the construction of auxiliary functions in the unifying framework}

We begin by describing general properties which the auxiliary functions $\{(\aphi_k(x),\apsi_k(x))\}_{k \geq -1}$ should satisfy.
These properties will guide us in how to iteratively construct the auxiliary functions.
The first set of properties is for a sequence of auxiliary functions $\{\sphi_k(x)\}_{k \geq -1}$. We define $\sum_{i=0}^{-1}(\cdot):=0$ and so $S_{-1}=0$.

\begin{prope}[in the unifying framework]
\label{framework-single}
Let $\{\sphi_k(x)\}_{k \geq -1}$ be a sequence of auxiliary functions associated with weight parameters $\{\lambda_k\}_{k \geq 0}$, scaling parameters $\{\beta_k\}_{k \geq -1}$, and test points $\{x_k\}_{k \geq 0}$. Let $\sigma_f\geq 0$ be a convexity parameter satisfying (\ref{s-conv-assump}) for some lower approximation model $m_f(y;x)$ of $f(x)$. Denote $\sz_k:=\argmin_{x \in Q}\sphi_k(x)$\footnote{The auxiliary function $\sphi_k(x)$ can possibly be an affine function. In that case, we will assume the boundedness of $Q$ in order to ensure an existence of a minimizer $\sz_k$.}. Then, the following conditions hold:
\begin{itemize}
\leftskip=3truemm
\item[(A1)] $\sphi_{-1}(\sz_{-1}) = 0$ and $\sz_{-1}=x_0$.
\item[(A2)] $\forall k \geq -1,~ \forall x \in Q$, we have
\[\sphi_{k+1}(x) \geq \sphi_k(\sz_k) + \lambda_{k+1}m_f(x_{k+1};x)+ \beta_{k+1}d(x) - \beta_k l_d(\sz_k;x) + S_k\sigma_f\xi(\sz_k,x).\]
\item[(A3)] $\forall k \geq -1,~~ \sphi_k(\sz_k) \leq \min_{x \in Q}\left\{ \sum_{i=0}^k \lambda_im_f(x_i;x) + \beta_k l_d(\sz_k;x) - S_k \sigma_f\xi(\sz_k,x) \right\}$.
\end{itemize}
\end{prope}

The above property is a generalization of Property 2 \cite{IF14} which is particularized by taking $\sigma_f=0$.
As a simple extension of Property \ref{framework-single}, we further consider a coupled sequence $\{(\aphi_k(x),\apsi_k(x))\}_{k \geq -1}$ of auxiliary functions which admits the property below.

\begin{prope}[in the unifying framework]
\label{framework-double}
Let $\{(\aphi_k(x),\apsi_k(x))\}_{k \geq -1}$ be a coupled sequence of auxiliary functions associated with weight parameters $\{\lambda_k\}_{k \geq 0}$, scaling parameters $\{\beta_k\}_{k \geq -1}$, and test points $\{x_k\}_{k \geq 0}$. Denote $\z_k:=\argmin_{x \in Q}\aphi_k(x)$ and $\w_k:=\argmin_{x \in Q}\apsi_k(x)$. Let $\sigma_f\geq 0$ be a convexity parameter satisfying (\ref{s-conv-assump}) for some lower approximation model $m_f(y;x)$ of $f(x)$. Then, the following conditions hold:
\begin{itemize}
\leftskip=3truemm
\item[(B0)] $\aphi_k(x) \geq \apsi_k(x)$ for all $x \in Q$.
\item[(B1)] $\apsi_{-1}(\w_{-1}) = 0$ and $\z_{-1}=\w_{-1}=x_0$.
\item[(B2)] $\forall k \geq -1,~ \forall x \in Q$, we have
\[\apsi_{k+1}(x) \geq \aphi_k(\z_k) + \lambda_{k+1}m_f(x_{k+1};x)+ \beta_{k+1}d(x) - \beta_k l_d(\z_k;x) + S_k\sigma_f\xi(\z_k,x).\]
\item[(B3)] $\forall k \geq -1,~~ \apsi_k(\w_k) \leq \min_{x \in Q}\left\{ \sum_{i=0}^k \lambda_im_f(x_i;x) + \beta_kl_d(\z_k;x) - S_k \sigma_f\xi(\z_k,x) \right\}$.
\end{itemize}
\end{prope}

Note that letting $\apsi_k(x) \equiv \aphi_k(x)$, it yields Property \ref{framework-single}.


\subsection{(Sub)gradient-based methods in the unifying framework}
\label{ssec-gen-subgrad-methods}

We propose the following (sub)gradient-based methods for non-smooth problems (Method \ref{gen-alg-nonsm}) and structured problems (Method \ref{gen-alg-struc}), respectively.
Each of them have two types of updates, the classical and the modified ones.

\begin{algorithm}[Subgradient-based methods for non-smooth problems in the unifying framework]
\label{gen-alg-nonsm}
Consider a non-smooth problem in the class $\mathcal{NSP}(g, \sigma_f)$.
Let $\{\lambda_k\}_{k \geq 0}$ and $\{\beta_k\}_{k \geq -1}$ be sequences of weight and scaling parameters, respectively.
Generate a sequence $\{(\sz_{k-1},x_k,g_k,\hat{x}_k)\}_{k \geq 0}$ by either the classical or the modified method as follows.
\begin{itemize}
\item[(0)] Set $\hat{x}_0 := x_0 := \sz_{-1} := \argmin_{x \in Q}d(x)$.
\item[(1)] ($k$-th iteration, $k \geq 0$) Set
$g_k:=g(x_k) \in \partial{f}(x_k)$ and compute $\sz_k, x_{k+1}, \hat{x}_{k+1}$ by
\begin{eqnarray*}
\textrm{Classical method} &:& x_{k+1}:=\sz_k:=\argmin_{x \in Q}\sphi_k(x),\quad \hat{x}_{k+1}:= \frac{S_k\hat{x}_k+\lambda_{k+1}\sz_k}{S_{k+1}},\\
or\\
\textrm{Modified method} &:& \sz_k := \argmin_{x \in Q}\sphi_k(x),\quad \hat{x}_{k+1}:=x_{k+1}:= \frac{S_k\hat{x}_k+\lambda_{k+1}\sz_k}{S_{k+1}}.
\end{eqnarray*}
\end{itemize}
Here, $\{\sphi_k(x)\}_{k \geq -1}$ is a single sequence of auxiliary functions satisfying Property \ref{framework-single}.
\end{algorithm}

Note that we did not use a coupled sequence $\{(\aphi_k(x),\apsi_k(x))\}_{k \geq -1}$ of auxiliary functions because we will see that their analysis (Lemmas \ref{rel-0}, \ref{rel-k-clas}, and \ref{rel-k-modif}) for the non-smooth problems are independent of the second object $\{\apsi_k(x)\}_{k \geq -1}$ (or $\w_k$).

\begin{algorithm}[Gradient-based methods for structured problems in the unifying framework]\label{gen-alg-struc}
Consider a structured problem in the class $\mathcal{SP}(m_f,\sigma_f,\bar\sigma_f,L,\delta)$.
Let $\{\lambda_k\}_{k \geq 0}$ and $\{\beta_k\}_{k \geq -1}$ be sequences of weight and scaling parameters, respectively.
Generate a sequence $\{(\z_{k-1},\w_{k-1},x_k,\hat{x}_k)\}_{k \geq 0}$ by either the classical or the modified method as follows.

\begin{itemize}
\item[(0)] Set $x_0:=\z_{-1}:=\w_{-1}:=\argmin_{x \in Q}d(x)$. Compute
\[ \z_0:=\argmin_{x \in Q}\aphi_0(x),\quad \hat{x}_0 := \w_0 := \argmin_{x \in Q}\apsi_0(x). \]

\item[(1)]($k$-th iteration, $k \geq 0$) Set
\begin{eqnarray*}
x_{k+1}&:=&\left\{
	\begin{array}{lcl}
	\z_k&:&\textrm{Classical method, }\\
	\dfrac{S_k\hat{x}_k+\lambda_{k+1}\z_k}{S_{k+1}}&:&\textrm{Modified method, }
	\end{array}
\right.\\
\z_{k+1} &:=& \argmin_{x \in Q}\aphi_{k+1}(x),\\
\w_{k+1} &:=& \argmin_{x \in Q}\apsi_{k+1}(x),\\
\hat{x}_{k+1} &:=& \frac{S_k\hat{x}_k + \lambda_{k+1}\w_{k+1}}{S_{k+1}}.
\end{eqnarray*}
\end{itemize}
Here, $\{(\aphi_k(x),\apsi_k(x))\}_{k \geq -1}$ is a coupled sequence of auxiliary functions satisfying Property \ref{framework-double}.
\end{algorithm}

The implementation of these methods will require a more specific construction of auxiliary functions $\{(\aphi_k(x),\apsi_k(x))\}_{k \geq -1}$ as we will see next.


\subsection{Construction of auxiliary functions in the unifying framework}
\label{ssec-auxfunc-construc}

Here we provide some formulas to construct a coupled sequence $\{(\aphi_k(x),\apsi_k(x))\}_{k \geq -1}$ of auxiliary functions which admit Property \ref{framework-double}.
For that, we firstly construct a single sequence of auxiliary functions $\{\sphi_k(x)\}_{k \geq -1}$ satisfying Property \ref{framework-single}.

\begin{thm}\label{const-auxfunc-single}
Given the weight parameters $\{\lambda_k\}_{k \geq 0}$, the scaling parameters $\{\beta_k\}_{k \geq -1}$, the test points $\{x_k\}_{k \geq 0}$, and a convexity parameter $\sigma_f \geq 0$ satisfying (\ref{s-conv-assump}) for some lower approximation model $m_f(y;x)$ of $f(x)$, construct the sequence $\{\sphi_k(x)\}_{k \geq -1}$ of auxiliary functions as follows. $\sphi_{-1}(x) := \beta_{-1}d(x)$, $\sz_{-1}:=x_0$ and,
for $k \geq -1$, define
\begin{equation}\label{eMD-update}
\sphi_{k+1}(x) := \sphi_k(\sz_k) + \lambda_{k+1}m_f(x_{k+1};x) + \beta_{k+1}d(x) - \beta_k l_d(\sz_k;x) + S_k\sigma_f\xi(\sz_k,x)
\end{equation}
or
\begin{equation}\label{DA-update}
\sphi_{k+1}(x) := \sphi_k(x) + \lambda_{k+1}m_f(x_{k+1};x) + \beta_{k+1}d(x) - \beta_{k}d(x).
\end{equation}
Then, the sequence $\{\sphi_k(x)\}_{k\geq -1}$ satisfies Property \ref{framework-single}.
\end{thm}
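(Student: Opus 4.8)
The plan is to verify (A1)--(A3) by induction on $k\ge -1$, carrying along two auxiliary facts: (a) $\sphi_k$ is strongly convex relative to $d$ with the accumulated parameter $\beta_k+S_k\sigma_f$, i.e.\ $\beta_k+S_k\sigma_f\in\sigma(\sphi_k)$; and (b) the ``upper model'' estimate $\sphi_k(x)\le \sum_{i=0}^k\lambda_i m_f(x_i;x)+\beta_k d(x)$ for all $x\in Q$. Property (A1) is immediate: since $\sphi_{-1}(x)=\beta_{-1}d(x)$ and $\sz_{-1}=x_0=\argmin_{x\in Q}d(x)$ with $d(x_0)=0$, we get $\sphi_{-1}(\sz_{-1})=0$, and $\sz_{-1}=x_0$ is the defining convention.

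For (a), the base case is $\sphi_{-1}=\beta_{-1}d$ with $S_{-1}=0$. For the inductive step one checks from each update formula that, as a function of $x$, $\sphi_{k+1}$ is a sum of: $\lambda_{k+1}m_f(x_{k+1};\cdot)$ (convexity parameter $\lambda_{k+1}\sigma_f$ relative to $d$, by (\ref{s-conv-assump})), a multiple of $d$, affine terms (the linearizations $l_d(\sz_k;\cdot)$, and inside $\xi(\sz_k,\cdot)=d(\cdot)-l_d(\sz_k;\cdot)$ another copy of $d$ and an affine term), and --- only in case (\ref{DA-update}) --- the function $\sphi_k$ itself; since such parameters add and $S_{k+1}=S_k+\lambda_{k+1}$, the parameter of $\sphi_{k+1}$ telescopes to $\beta_{k+1}+S_{k+1}\sigma_f$ (the additive constant $\sphi_k(\sz_k)$ appearing in (\ref{eMD-update}) plays no role). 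Applying the characterization (\ref{st-conv-Breg}) to $\psi=\sphi_k$ at $y=\sz_k$ and using the first-order optimality $\sphi_k'(\sz_k;x-\sz_k)\ge 0$ (valid at any minimizer of $\sphi_k$ over $Q$, possibly after assuming $Q$ bounded when $\sphi_k$ is affine), I obtain the key estimate
\[
\sphi_k(x)\ \ge\ \sphi_k(\sz_k)+(\beta_k+S_k\sigma_f)\,\xi(\sz_k,x),\qquad\forall x\in Q.
\]

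This estimate immediately gives (A2): for update (\ref{eMD-update}) it holds with equality by construction, while for update (\ref{DA-update}), subtracting the right-hand side of (A2) from $\sphi_{k+1}(x)$ and using $d(x)=l_d(\sz_k;x)+\xi(\sz_k,x)$ leaves exactly $\sphi_k(x)-\sphi_k(\sz_k)-(\beta_k+S_k\sigma_f)\xi(\sz_k,x)\ge 0$. For (b), the base case holds with equality; it is preserved term by term under (\ref{DA-update}); and under (\ref{eMD-update}) the only contribution of stage $k$ that is not affine in $x$ is the constant $\sphi_k(\sz_k)$, which I bound by (A3) at stage $k$ evaluated at the running point $x$ --- after that the linearization terms cancel and (b) at stage $k+1$ follows. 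Finally, combining the key estimate with (b), both at stage $k$, and rewriting $\beta_k d(x)-(\beta_k+S_k\sigma_f)\xi(\sz_k,x)=\beta_k l_d(\sz_k;x)-S_k\sigma_f\xi(\sz_k,x)$, yields $\sphi_k(\sz_k)\le \sum_{i=0}^k\lambda_i m_f(x_i;x)+\beta_k l_d(\sz_k;x)-S_k\sigma_f\xi(\sz_k,x)$ for every $x\in Q$, which is precisely (A3); at $k=-1$ one additionally uses $l_d(x_0;x)=\innprod{\nabla d(x_0)}{x-x_0}\ge 0$ by optimality of $x_0$ for $d$.

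The induction thus interlocks as: the convexity parameter of $\sphi_k$ yields the key estimate at stage $k$; the key estimate together with (b) at stage $k$ yields (A3) at stage $k$; and (A3) at stage $k$ feeds the (\ref{eMD-update})-case of (b) at stage $k+1$. The step I expect to require the most care is exactly this (\ref{eMD-update})-case of the upper-model estimate (b): one must recognize that $\sphi_k(\sz_k)$ is the single term forcing an appeal to the inductive (A3), and then check that all Bregman and linearization terms cancel cleanly. Secondary fiddly points are the convexity-parameter bookkeeping in (a) when $m_f(x_i;\cdot)$ is merely lsc convex (so one must argue through (\ref{st-conv-Breg}) and directional derivatives, not gradients), and the mild boundary subtlety that $x_0$ need not satisfy $\nabla d(x_0)=0$, only $l_d(x_0;\cdot)\ge 0$ on $Q$.
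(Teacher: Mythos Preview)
Your proposal is correct and follows essentially the same route as the paper: the key estimate $\sphi_k(x)\ge \sphi_k(\sz_k)+(\beta_k+S_k\sigma_f)\xi(\sz_k,x)$ is exactly the paper's Lemma~\ref{prop-subprob}, your proof of (A2) matches Lemma~\ref{admits-2}, and your derivation of (A3) from the upper model bound (b) plus the key estimate is what the paper does in Lemma~\ref{admits-3}. The only organizational difference is that you carry (b) as an explicit induction invariant, whereas the paper re-derives it inside the (A3) induction step by tracing back to the most recent eMD-update; this is a cosmetic repackaging of the same argument.
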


The assumption $\sz_{-1}:=x_0$ is satisfied whenever $\beta_{-1}>0$ because $\min_{x \in Q}d(x)=d(x_0)=0$, but it is required when $\beta_{-1}=0$; in both cases, the condition (A1) holds.
To prove Theorem \ref{const-auxfunc-single}, it remains to show (A2) and (A3) which will be done in Lemmas \ref{admits-2} and \ref{admits-3}, respectively.

The following theorem is a simple consequence of Theorem \ref{const-auxfunc-single}.

\begin{thm}\label{const-auxfunc-double}
Let $\{\aphi_k(x)\}_{k \geq -1}$ be generated accordingly to the construction in Theorem \ref{const-auxfunc-single} associated with weight parameters $\{\lambda_k\}_{k \geq 0}$, scaling parameters $\{\beta_k\}_{k \geq -1}$, test points $\{x_k\}_{k \geq 0}$, and a convexity parameter $\sigma_f \geq 0$ satisfying (\ref{s-conv-assump}) for some lower approximation model $m_f(y;x)$ of $f(x)$. Define $\{\apsi_k(x)\}_{k \geq -1}$ by $\apsi_{-1}(x):=\aphi_{-1}(x)$ and
\begin{equation}\label{double-update}
\apsi_{k+1}(x):=\aphi_k(\z_k)+\lambda_{k+1}m_f(x_{k+1};x)+\beta_{k+1}d(x)-\beta_kl_d(\z_k;x)+S_k\sigma_f\xi(\z_k,x).
\end{equation}
Then, the sequence $\{(\aphi_k(x),\apsi_k(x))\}_{k\geq -1}$ satisfies Property \ref{framework-double}.
\end{thm}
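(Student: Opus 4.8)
The plan is to check the four conditions (B0)--(B3) of Property~\ref{framework-double} in turn, leaning on the fact that, by Theorem~\ref{const-auxfunc-single}, the sequence $\{\aphi_k(x)\}_{k\geq -1}$ already obeys Property~\ref{framework-single}; note that the role of $\sz_k$ there is played by $\z_k=\argmin_{x\in Q}\aphi_k(x)$ here. The observation that collapses the proof is that the right-hand side of the defining recursion~(\ref{double-update}) for $\apsi_{k+1}$ is, term for term, the lower bound appearing in condition (A2) written for $\{\aphi_k\}$.

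I would begin with (B2): since~(\ref{double-update}) \emph{defines} $\apsi_{k+1}(x)$ for every $k\geq -1$ to be exactly $\aphi_k(\z_k)+\lambda_{k+1}m_f(x_{k+1};x)+\beta_{k+1}d(x)-\beta_k l_d(\z_k;x)+S_k\sigma_f\xi(\z_k,x)$, condition (B2) holds with equality, with nothing to prove. For (B0): when $k=-1$ we have $\apsi_{-1}:=\aphi_{-1}$, so $\aphi_{-1}\geq\apsi_{-1}$ trivially; when $k\geq 0$, write $k=j+1$ with $j\geq -1$ and apply (A2) of Property~\ref{framework-single} to $\{\aphi_k\}$ --- its right-hand side is precisely $\apsi_{j+1}(x)$ by~(\ref{double-update}), whence $\aphi_{j+1}(x)\geq\apsi_{j+1}(x)$ for all $x\in Q$. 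For (B1): Theorem~\ref{const-auxfunc-single} gives $\aphi_{-1}(x)=\beta_{-1}d(x)$ and $\z_{-1}=x_0$; since $\apsi_{-1}=\aphi_{-1}$ and $d\geq 0$ with $d(x_0)=0$, we get $\w_{-1}=\argmin_{x\in Q}\apsi_{-1}(x)=x_0$ (when $\beta_{-1}=0$ this minimizer is fixed by the stated convention, $Q$ being assumed bounded in that case) and $\apsi_{-1}(\w_{-1})=\beta_{-1}d(x_0)=0$.

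Finally, for (B3) I would chain $\apsi_k(\w_k)=\min_{x\in Q}\apsi_k(x)\leq\apsi_k(\z_k)\leq\aphi_k(\z_k)$, where the last inequality is (B0) evaluated at $x=\z_k$, and then invoke (A3) of Property~\ref{framework-single} for $\{\aphi_k\}$, namely $\aphi_k(\z_k)\leq\min_{x\in Q}\{\sum_{i=0}^k\lambda_i m_f(x_i;x)+\beta_k l_d(\z_k;x)-S_k\sigma_f\xi(\z_k,x)\}$, whose right-hand side is exactly the bound demanded in (B3). Stringing these inequalities together completes the argument.

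Since every piece reduces to a single appeal to one of (A1)--(A3) for the already-validated sequence $\{\aphi_k\}$, there is no substantive obstacle here; the only care needed is bookkeeping --- tracking the index shift $k\mapsto k+1$ between~(\ref{double-update}) and (A2), identifying $\sz_k$ of Theorem~\ref{const-auxfunc-single} with $\z_k=\argmin_{x\in Q}\aphi_k(x)$, and handling the degenerate case $\beta_{-1}=0$, where $\aphi_{-1}\equiv\apsi_{-1}\equiv 0$ is affine and its minimizer is pinned down by convention rather than uniqueness (which is why the boundedness of $Q$ is needed there).
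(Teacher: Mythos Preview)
Your proof is correct and follows essentially the same approach as the paper: both verify (B2) as an equality by definition, derive (B0) by recognizing that the right-hand side of~(\ref{double-update}) coincides with the lower bound in (A2), obtain (B1) from (A1) together with $\apsi_{-1}:=\aphi_{-1}$, and deduce (B3) from (B0) combined with (A3). Your write-up is somewhat more explicit --- in particular the chain $\apsi_k(\w_k)\leq\apsi_k(\z_k)\leq\aphi_k(\z_k)$ and the remark on the degenerate case $\beta_{-1}=0$ --- but the logical content is identical.
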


\begin{proof}
Notice that (\ref{double-update}) satisfies the condition (B2) as equality. The condition (B1) is immediate from the condition (A1) for $\{\aphi_k(x)\}$ and the definition $\apsi_{-1}(x):=\aphi_{-1}(x)$.
Since (\ref{double-update}) coincides with the right hand side of (A2) for $\{\aphi_k(x)\}$, the condition (B0) is clear.
Finally, the condition (B3) is satisfied by (B0) and (A3) for $\{\aphi_k(x)\}$.
\end{proof}

Before proving Theorem \ref{const-auxfunc-single}, let us see some particular constructions of auxiliary functions, which will be useful for the comparison with some existing methods.
\begin{itemize}

\item {\it Extended MD model}. Define $\{\aphi_k(x)\}_{k \geq -1}$ by $\aphi_{-1}(x):=\beta_{-1}d(x)$ and
\begin{equation}\label{eMD-model}
\aphi_{k+1}(x):=\aphi_k(\z_k)+\lambda_{k+1}m_f(x_{k+1};x)+\beta_{k+1}d(x)-\beta_kl_d(\z_k;x)+S_k\sigma_f\xi(\z_k,x)
\end{equation}
for $ k \geq -1$. Then, Property \ref{framework-single} follows from Theorem \ref{const-auxfunc-single} with the update (\ref{eMD-update}).

\item {\it DA model}. Define $\{\aphi_k(x)\}_{k \geq -1}$ by $\sphi_{-1}(x):=\beta_{-1}d(x)$ and
\begin{equation}\label{DA-model}
\aphi_{k}(x):=\sum_{i=0}^k\lambda_im_f(x_i;x) + \beta_k d(x)
\end{equation}
for $k \geq -1$. Then, Property \ref{framework-single} follows from Theorem \ref{const-auxfunc-single} with the update (\ref{DA-update}).

\item {\it Hybrid model}. Define $\{(\aphi_k(x),\apsi_k(x))\}$ by $\apsi_{-1}(x):=\beta_{-1}d(x)$ and
\begin{equation}\label{hybrid-model}
\begin{array}{rcl}
\aphi_k(x) &:=& \sum_{i=0}^k \lambda_im_f(x_i;x)+\beta_k d(x),\\
\apsi_{k+1}(x) &:=& \min_{z\in Q}\aphi_k(z)+\lambda_{k+1}m_f(x_{k+1};x)+\beta_{k+1}d(x)-\beta_kl_d(\z_k;x)+S_k\sigma_f\xi(\z_k,x)
\end{array}
\end{equation}
for $k \geq -1$. Then, Property \ref{framework-double} follows from Theorem \ref{const-auxfunc-double} with the update (\ref{double-update}).
\end{itemize}

Consequently, Method \ref{gen-alg-nonsm} provides at least four particularizations; we can choose the classical or the modified updates combined to the choice of the auxiliary functions constructed by the extended MD model (\ref{eMD-model}) or by the DA model (\ref{DA-model}) (or arbitrarily combination of them). Notice that subproblems $\sz_k := \argmin_{x \in Q}\sphi_k(x)$ in these particularizations can be solved as the form (\ref{subprob-simple}).

 Method \ref{gen-alg-struc} yields at least six particularizations due to the additional choice of the hybrid model (\ref{hybrid-model}). However, employing the models (\ref{eMD-model}) or (\ref{DA-model}) in Method \ref{gen-alg-struc} reduces the number of subproblems at each iteration since $\z_k\equiv \w_k$. Note that only the extended MD model (\ref{eMD-model}) turns the subproblem $\z_k:=\argmin_{x \in Q}\aphi_k(x)$ of the form (\ref{subprob-struc}); the others require the solution of the subproblem (\ref{DA-subprob-nonst}). However, the subproblems with these models have the same difficulty for all the examples cited in Section \ref{ssec-existing-struc}.

We remark that Theorems \ref{const-auxfunc-single} and \ref{const-auxfunc-double} give infinitely many ways of constructing$\{(\aphi_k(x),\apsi_k(x))\}$ because we can mix the updates (\ref{eMD-update}) and (\ref{DA-update}) in any order.


\subsection{Proof of Theorem \ref{const-auxfunc-single}}
Now let us complete the proof of Theorem \ref{const-auxfunc-single}.

\begin{lem}\label{prop-subprob}
Let $\{\sphi_k(x)\}_{k \geq -1}$ be generated accordingly to the construction in Theorem \ref{const-auxfunc-single} associated with weight parameters $\{\lambda_k\}_{k \geq 0}$, scaling parameters $\{\beta_k\}_{k \geq -1}$, test points $\{x_k\}_{k \geq 0}$, and a convexity parameter $\sigma_f \geq 0$ satisfying (\ref{s-conv-assump}) for some lower approximation model $m_f(y;x)$ of $f(x)$.
Then, for every $k \geq -1$, we have
\[ \sphi_k(x) \geq \sphi_k(\sz_k)+(\beta_k+S_k\sigma_f)\xi(\sz_k,x),\quad \forall x \in Q,~\forall k \geq -1. \]
\end{lem}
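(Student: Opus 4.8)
The plan is to prove the inequality by induction on $k$, distinguishing the two possible updates \eqref{eMD-update} and \eqref{DA-update} used to pass from $\sphi_k$ to $\sphi_{k+1}$, and in each case exploiting that $\sz_k$ is the minimizer of $\sphi_k$ together with the strong convexity of $d(x)$ (equivalently, the quadratic-type lower bound supplied by the Bregman distance $\xi$). The base case $k=-1$ is immediate: $\sphi_{-1}(x)=\beta_{-1}d(x)$, $\sz_{-1}=x_0=\argmin_{x\in Q}d(x)$, so $\xi(\sz_{-1},x)=d(x)-l_d(x_0;x)=d(x)$ since $\nabla d(x_0)$ annihilates $Q$-directions at the minimizer (or, more carefully, $\innprod{\nabla d(x_0)}{x-x_0}\ge 0$ by optimality, giving $d(x)\ge l_d(x_0;x)+\,$nothing — actually we get $\xi(x_0,x)\le d(x)$; the claim $\beta_{-1}d(x)\ge\beta_{-1}(S_{-1}\sigma_f+\beta_{-1})\xi(x_0,x)$ with $S_{-1}=0$ reduces to $d(x)\ge\xi(x_0,x)$, which does hold). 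Here one should be slightly careful with the direction of this inequality at the initial step; I expect the cleanest route is to observe directly that $\sphi_{-1}(x)-\sphi_{-1}(x_0)=\beta_{-1}(d(x)-d(x_0))=\beta_{-1}d(x)\ge\beta_{-1}\xi(x_0,x)$.

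For the inductive step, suppose the claim holds at level $k$. Consider first the DA update \eqref{DA-update}: $\sphi_{k+1}(x)=\sphi_k(x)+\lambda_{k+1}m_f(x_{k+1};x)+(\beta_{k+1}-\beta_k)d(x)$. Since $\sigma_f\in\sigma(m_f(x_{k+1};\cdot))$, the function $m_f(x_{k+1};\cdot)$ satisfies \eqref{st-conv-Breg} with parameter $\sigma_f$, hence is convex and in fact $\sigma_f$-strongly convex relative to $d$; similarly $(\beta_{k+1}-\beta_k)d(x)$ is $(\beta_{k+1}-\beta_k)$-strongly convex relative to $d$ (the scaling parameters are nondecreasing, so $\beta_{k+1}-\beta_k\ge0$). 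Combining with the inductive hypothesis on $\sphi_k$ — which says $\sphi_k$ lies above its value at $\sz_k$ plus $(\beta_k+S_k\sigma_f)\xi(\sz_k,\cdot)$ — and recognizing that $\sz_{k+1}$ minimizes $\sphi_{k+1}$, a standard "sum of strongly convex functions, evaluated against the minimizer" argument yields $\sphi_{k+1}(x)\ge\sphi_{k+1}(\sz_{k+1})+(\beta_{k+1}+S_{k+1}\sigma_f)\xi(\sz_{k+1},x)$, using $\lambda_{k+1}\sigma_f+S_k\sigma_f=S_{k+1}\sigma_f$ and $(\beta_{k+1}-\beta_k)+\beta_k=\beta_{k+1}$. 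The key technical fact I will invoke is the elementary lemma: if $h=h_1+h_2$ with $h_i$ convex, $h_1(x)\ge h_1(\bar x)+\mu_1\xi(\bar x,x)$ for its reference point $\bar x$ and $h_2$ is $\mu_2$-strongly convex relative to $d$ with minimizer of $h$ at $z$, then $h(x)\ge h(z)+(\mu_1+\mu_2)\xi(z,x)$ — this follows from first-order optimality of $z$ for $h$ and the three-point identity for Bregman distances.

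The extended MD update \eqref{eMD-update} is handled the same way: $\sphi_{k+1}(x)=\sphi_k(\sz_k)+\lambda_{k+1}m_f(x_{k+1};x)+\beta_{k+1}d(x)-\beta_k l_d(\sz_k;x)+S_k\sigma_f\xi(\sz_k,x)$, and here the first term is constant, while $-\beta_k l_d(\sz_k;x)+\beta_k d(x)=\beta_k\xi(\sz_k,x)$ together with $S_k\sigma_f\xi(\sz_k,x)$ contributes $(\beta_k+S_k\sigma_f)$ units of relative strong convexity, and $\lambda_{k+1}m_f(x_{k+1};\cdot)$ contributes $\lambda_{k+1}\sigma_f$, and $(\beta_{k+1}-\beta_k)d(x)$ contributes $\beta_{k+1}-\beta_k$; summing gives the coefficient $\beta_{k+1}+S_{k+1}\sigma_f$ as required. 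Note that in this case we do not even need the inductive hypothesis, since $\sphi_k(\sz_k)$ is already a constant. The main obstacle I anticipate is purely bookkeeping: correctly tracking that the affine/linearization terms $-\beta_k l_d(\sz_k;x)$ combine with $\beta_k d(x)$ to form exactly $\beta_k\xi(\sz_k,x)$ (so that no spurious linear term survives), and verifying the coefficient arithmetic $S_{k+1}=S_k+\lambda_{k+1}$, $\beta_{k+1}=\beta_k+(\beta_{k+1}-\beta_k)$ lines up so the reference point shifts cleanly from $\sz_k$ to $\sz_{k+1}$; the convexity/optimality input itself is routine once \eqref{st-conv-Breg} is in hand.
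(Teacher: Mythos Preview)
Your overall plan (induction on $k$, case-splitting on the two updates) is sound, and your treatment of the eMD update~\eqref{eMD-update} is correct: there the previous $\sphi_k$ enters only through the constant $\sphi_k(\sz_k)$, so no inductive hypothesis on $\sphi_k$ is needed and one reads off the relative strong convexity of $\sphi_{k+1}$ directly.

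The gap is in the DA case. The ``elementary lemma'' you invoke is false as stated. A single-point lower bound $h_1(x)\ge h_1(\bar x)+\mu_1\xi(\bar x,x)$, even with $\bar x$ the minimizer of $h_1$, does \emph{not} allow you to transfer the coefficient $\mu_1$ to a different reference point $z=\argmin_{Q}(h_1+h_2)$. Concretely, on $Q=[0,2]$ with $d(x)=\tfrac12 x^2$ take
\[
h_1(x)=\begin{cases}\tfrac12 x^2 & 0\le x\le 1,\\ x-\tfrac12 & 1<x\le 2,\end{cases}
\qquad h_2(x)=-x,
\]
so $h_1$ is convex, $\bar x=0$, and one checks $h_1(x)\ge \tfrac34\,\xi(0,x)$ on $Q$, i.e.\ $\mu_1=\tfrac34$; $h_2$ is affine so $\mu_2=0$. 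But $h=h_1+h_2$ equals $-\tfrac12$ on $[1,2]$, so with $z=1$ your claimed inequality $h(x)\ge h(z)+\tfrac34\,\xi(z,x)$ fails at $x=2$. The three-point identity does not rescue this: it produces a linear cross-term $\innprod{\nabla d(z)-\nabla d(\bar x)}{x-z}$ that you cannot control using only the single-point bound at $\bar x$.

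The fix is to strengthen the inductive hypothesis to the global statement $\beta_k+S_k\sigma_f\in\sigma(\sphi_k)$ (i.e.\ $\sphi_k-(\beta_k+S_k\sigma_f)d$ is convex on $Q$). This \emph{does} propagate under both updates, since $\sigma(\cdot)$ is additive and $\sigma_f\in\sigma(m_f(x_{k+1};\cdot))$, $\beta_{k+1}-\beta_k\in\sigma((\beta_{k+1}-\beta_k)d)$; for the eMD update the affine term $-\beta_kl_d(\sz_k;\cdot)$ contributes nothing and the $\xi$-term contributes $S_k\sigma_f$. Once $\beta_k+S_k\sigma_f\in\sigma(\sphi_k)$ is established, the desired inequality follows immediately from~\eqref{st-conv-Breg} at $y=\sz_k$ together with the optimality condition $\sphi_k'(\sz_k;x-\sz_k)\ge 0$. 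This is exactly the route the paper takes, and it is shorter than your case analysis.
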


\begin{proof}
Since $\sigma_f \in \sigma(m_f(x_i,\cdot))$ for $i \geq 0$, we can see inductively that $\beta_{k}+S_{k}\sigma_f \in \sigma(\sphi_{k})$ for all $k \geq -1$. Therefore, using its characterization (\ref{st-conv-Breg}), the optimality condition $\sphi_k'(\sz_k;x-\sz_k) \geq 0, \forall x \in Q$ for the minimizer $\sz_k=\argmin_{x \in Q}\sphi_k(x)$ yields the conclusion.
\end{proof}

\begin{lem}\label{admits-2}
Let $\{\sphi_k(x)\}_{k \geq -1}$ be generated accordingly to the construction in Theorem \ref{const-auxfunc-single} associated with weight parameters $\{\lambda_k\}_{k \geq 0}$, scaling parameters $\{\beta_k\}_{k \geq -1}$, test points $\{x_k\}_{k \geq 0}$, and a convexity parameter $\sigma_f \geq 0$ satisfying (\ref{s-conv-assump}) for some lower approximation model $m_f(y;x)$ of $f(x)$.
Then, the condition (A2) holds.
\end{lem}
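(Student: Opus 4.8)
The plan is to verify (A2) separately for the two possible updates defining $\sphi_{k+1}(x)$, namely the extended MD update (\ref{eMD-update}) and the DA update (\ref{DA-update}). For the extended MD update the inequality in (A2) is in fact an equality by the very definition (\ref{eMD-update}), so there is nothing to prove in that case. Hence the entire content of the lemma lies in the DA update (\ref{DA-update}), where $\sphi_{k+1}(x) = \sphi_k(x) + \lambda_{k+1}m_f(x_{k+1};x) + \beta_{k+1}d(x) - \beta_k d(x)$, and we must show this is $\geq \sphi_k(\sz_k) + \lambda_{k+1}m_f(x_{k+1};x) + \beta_{k+1}d(x) - \beta_k l_d(\sz_k;x) + S_k\sigma_f\xi(\sz_k,x)$ for all $x \in Q$.

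First I would cancel the common terms $\lambda_{k+1}m_f(x_{k+1};x)$ and $\beta_{k+1}d(x)$ from both sides, reducing the claim to
\[ \sphi_k(x) - \beta_k d(x) \;\geq\; \sphi_k(\sz_k) - \beta_k l_d(\sz_k;x) + S_k\sigma_f\xi(\sz_k,x), \qquad \forall x \in Q. \]
Recalling $d(x) - l_d(\sz_k;x) = \xi(\sz_k,x)$, the right-hand side equals $\sphi_k(\sz_k) - \beta_k d(x) + \beta_k\xi(\sz_k,x) + S_k\sigma_f\xi(\sz_k,x)$, so after adding $\beta_k d(x)$ to both sides the inequality becomes exactly
\[ \sphi_k(x) \;\geq\; \sphi_k(\sz_k) + (\beta_k + S_k\sigma_f)\xi(\sz_k,x), \qquad \forall x \in Q, \]
which is precisely the statement of Lemma \ref{prop-subprob}. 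So I would simply invoke that lemma to close the DA case.

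The only mild subtlety — and the closest thing to an obstacle — is bookkeeping about which index the DA update applies to: Lemma \ref{prop-subprob} must be available at index $k$ when we perform the step producing $\sphi_{k+1}$, but this is fine since Lemma \ref{prop-subprob} is stated for all $k \geq -1$ and its proof (via the inductive claim $\beta_k + S_k\sigma_f \in \sigma(\sphi_k)$ and the optimality condition for $\sz_k$) does not depend on how $\sphi_{k+1}$ is subsequently formed. One should also note the base case $k=-1$ of (A2): with the extended MD update it is again an equality by definition of $\sphi_0$, and with the DA update it reduces as above to Lemma \ref{prop-subprob} at $k=-1$, where $\sphi_{-1}(x) = \beta_{-1}d(x)$, $\sz_{-1} = x_0$, $S_{-1}=0$, and the required inequality $\beta_{-1}d(x) \geq \beta_{-1}\xi(x_0,x)$ holds because $l_d(x_0;x) = d(x_0) + \innprod{\nabla d(x_0)}{x - x_0} \geq 0$ (indeed $d(x_0)=0$ and $\innprod{\nabla d(x_0)}{x-x_0}\geq 0$ by optimality of $x_0$), equivalently $d(x) \geq \xi(x_0,x)$. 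Thus every case of (A2) follows, completing the proof.
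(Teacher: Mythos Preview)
Your proof is correct and follows essentially the same approach as the paper: noting that the extended MD update satisfies (A2) as an equality, and for the DA update invoking Lemma~\ref{prop-subprob} together with the identity $\beta_k\xi(\sz_k,x)-\beta_k d(x)=-\beta_k l_d(\sz_k;x)$ to obtain the required lower bound. The paper's presentation is slightly terser (it chains the inequalities directly rather than cancelling terms first), and your separate treatment of the base case $k=-1$ is unnecessary since Lemma~\ref{prop-subprob} already covers all $k\geq -1$, but the substance is the same.
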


\begin{proof}
Notice that the construction (\ref{eMD-update}) satisfies (A2) as equality.
In the case of the construction (\ref{DA-update}), Lemma \ref{prop-subprob} yields for any $x \in Q$ that
\begin{eqnarray*}
\sphi_{k+1}(x) &=&  \sphi_k(x) + \lambda_{k+1}m_f(x_{k+1};x) + \beta_{k+1}d(x) - \beta_{k}d(x) \\
	&\geq& [\sphi_k(\sz_k) + (\beta_k+S_k\sigma_f)\xi(\sz_k,x) ] + \lambda_{k+1}m_f(x_{k+1};x) + \beta_{k+1}d(x) - \beta_{k}d(x)\\
	&=& \sphi_k(\sz_k) + \lambda_{k+1}m_f(x_{k+1};x) + \beta_{k+1}d(x) - \beta_{k}l_d(\sz_k;x) + S_k\sigma_f\xi(\sz_k,x)
\end{eqnarray*}
which is the condition (A2) for $k \geq -1$.
\end{proof}

\begin{lem}\label{admits-3}
Let $\{\sphi_k(x)\}_{k \geq -1}$ be generated accordingly to the construction in Theorem \ref{const-auxfunc-single} associated with weight parameters $\{\lambda_k\}_{k \geq 0}$, scaling parameters $\{\beta_k\}_{k \geq -1}$, test points $\{x_k\}_{k \geq 0}$, and a convexity parameter $\sigma_f \geq 0$ satisfying (\ref{s-conv-assump}) for some lower approximation model $m_f(y;x)$ of $f(x)$.
Then, the condition (A3) holds.
\end{lem}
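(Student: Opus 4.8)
The plan is to prove the conclusion (A3) by induction on $k\ge -1$, strengthening the hypothesis by carrying along the auxiliary upper bound
\[ \sphi_k(x)\ \le\ \sum_{i=0}^{k}\lambda_i\,m_f(x_i;x)+\beta_k\,d(x),\qquad \forall\,x\in Q, \]
which I will denote $(\star)_k$. A strengthening is needed because (A3)$_k$ only controls $\sphi_k$ at the single point $\sz_k$, which is too weak to be pushed through a Dual-Averaging step (\ref{DA-update}): that step replaces $\sphi_{k+1}$ by $\sphi_k(x)+(\cdots)$ and so requires a bound on $\sphi_k(x)$ valid for every $x\in Q$, i.e.\ precisely $(\star)_k$.

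The first step is to observe that $(\star)_k$ already implies (A3)$_k$. Indeed, Lemma \ref{prop-subprob} gives $\sphi_k(\sz_k)\le \sphi_k(x)-(\beta_k+S_k\sigma_f)\,\xi(\sz_k,x)$ for all $x\in Q$; feeding in $(\star)_k$ and using the identity $d(x)-\xi(\sz_k,x)=l_d(\sz_k;x)$, which is just the definition of the Bregman distance, yields
\[ \sphi_k(\sz_k)\ \le\ \sum_{i=0}^{k}\lambda_i\,m_f(x_i;x)+\beta_k\,l_d(\sz_k;x)-S_k\sigma_f\,\xi(\sz_k,x),\qquad \forall\,x\in Q, \]
and taking the minimum over $x\in Q$ gives (A3)$_k$. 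The base case $k=-1$ is covered by this too: $\sphi_{-1}(x)=\beta_{-1}d(x)$ makes $(\star)_{-1}$ an equality (with $\sz_{-1}=x_0$ fixed by the construction), so (A3)$_{-1}$ follows.

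It then remains to propagate $(\star)$ from $k$ to $k+1$; since the two admissible updates may be interleaved in any order, each of them must be shown to preserve $(\star)$ on its own. For the extended-MD update (\ref{eMD-update}), which holds as an equality in (A2), I would substitute the bound (A3)$_k$ --- evaluated at the very same $x$ --- for the term $\sphi_k(\sz_k)$ in the definition of $\sphi_{k+1}(x)$; the paired terms $\pm\beta_k l_d(\sz_k;x)$ and $\pm S_k\sigma_f\xi(\sz_k,x)$ cancel, leaving exactly $(\star)_{k+1}$. For the DA update (\ref{DA-update}) I would instead substitute $(\star)_k$ into $\sphi_{k+1}(x)=\sphi_k(x)+\lambda_{k+1}m_f(x_{k+1};x)+\beta_{k+1}d(x)-\beta_k d(x)$; the two $\beta_k d(x)$ terms cancel and $(\star)_{k+1}$ results again. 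This closes the induction, and with it the validity of both $(\star)_k$ and (A3)$_k$ for all $k\ge -1$.

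The only genuinely non-mechanical point is the choice of the strengthened invariant $(\star)$; once it is identified, everything reduces to Lemma \ref{prop-subprob}, the Bregman identity $l_d(\sz_k;x)=d(x)-\xi(\sz_k,x)$, and the cancellation bookkeeping for the scaling and Bregman terms above.
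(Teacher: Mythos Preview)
Your proof is correct and follows essentially the same route as the paper: both arguments establish the pointwise upper bound $\sphi_k(x)\le \sum_{i=0}^{k}\lambda_i\,m_f(x_i;x)+\beta_k\,d(x)$ (your invariant $(\star)_k$) and then apply Lemma~\ref{prop-subprob} together with the identity $d(x)-\xi(\sz_k,x)=l_d(\sz_k;x)$ to deduce (A3)$_k$. The only difference is organizational---the paper, rather than carrying $(\star)_k$ forward one step at a time, unwinds any trailing run of DA updates back to the most recent eMD step $j+1$ and invokes (A3)$_j$ there; your explicit strengthened invariant makes the step-by-step propagation cleaner and avoids that case distinction.
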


\begin{proof}
We prove the assertion by induction. Since $\sz_{-1}=x_0=\argmin_{x \in Q}d(x)$, we have $\min_{x \in Q}l_d(\sz_{-1};x)=\min_{x \in Q}d(x)=0$ which proves (A3) for $k=-1$.
Assume that (A3) holds up to $k \geq -1$.
In the case when all $\{\aphi_{i}(x)\}_{i=0}^{k+1}$ are constructed by (\ref{DA-update}), it coincides with the formula (\ref{DA-model}). Therefore, Lemma \ref{prop-subprob} implies that
\begin{eqnarray*}
\sphi_k(\sz_k) \leq \sphi_k(x) - (\beta_k+S_k\sigma_f)\xi(\sz_k,x)
&=& \sum_{i=0}^k \lambda_i m_f(x_i;x) + \beta_kd(x) - (\beta_k + S_k\sigma_f)\xi(\sz_k,x)\\
&=& \sum_{i=0}^k \lambda_i m_f(x_i;x) + \beta_kl_d(\sz_k;x) - S_k\sigma_f\xi(\sz_k,x)
\end{eqnarray*}
for every $x \in Q$, from which the condition (A3) follows.
If this is not the case, there exists some integer $j\leq k$ such that $\sphi_{k+1}(x)$ is constructed as defining $\sphi_{j+1}(x)$ by (\ref{eMD-update}) and $\sphi_{j+2}(x),\ldots,\sphi_{k+1}(x)$ by (\ref{DA-update}). Then, we have
\[
\sphi_{k+1}(x) =\min_{z\in Q}\sphi_{j}(z) + \sum_{i=j+1}^{k+1}\lambda_im_f(x_i;x) + \beta_{k+1}d(x)-\beta_jl_d(\sz_j;x) + S_j\sigma_f\xi(\sz_j,x)
\]
which yields $\sphi_{k+1}(x)\leq\sum_{i=0}^{k+1}\lambda_im_f(x_i;x) + \beta_{k+1}d(x)$ by the induction hypothesis (A3) for $\sphi_j(x)$.
Therefore, Lemma \ref{prop-subprob} implies for every $x \in Q$ that
\begin{eqnarray*}
\sphi_{k+1}(\sz_{k+1}) &\leq& \sphi_{k+1}(x)- (\beta_{k+1}+S_{k+1}\sigma_f)\xi(\sz_{k+1},x)\\
&\leq& \sum_{i=0}^{k+1} \lambda_im_f(x_i;x) + \beta_{k+1}d(x) - (\beta_{k+1}+S_{k+1}\sigma_f)\xi(\sz_{k+1},x)\\
&=& \sum_{i=0}^{k+1} \lambda_im_f(x_i;x) + \beta_{k+1}l_d(\sz_{k+1};x) - S_{k+1}\sigma_f\xi(\sz_{k+1},x)
\end{eqnarray*}
which gives the condition (A3) for $\sphi_{k+1}(x)$.
\end{proof}


\section{General convergence estimates of subgradient-based methods in the unifying framework}
\label{sec-general-convergence}

In this section we show general efficiency estimates of Methods \ref{gen-alg-nonsm} and \ref{gen-alg-struc} for the non-smooth and for the structured problems, respectively.
We then use the results of this section to derive particular convergence rates for these methods in Section \ref{sec-convergence}.

Note that in general the classical and the modified methods in Methods \ref{gen-alg-nonsm} and \ref{gen-alg-struc} will provide different convergence rates.
They yield the same convergence rate for non-smooth problems but the modified method gives much better efficiency than the classical method for smooth problems as discussed in Section \ref{sec-convergence}.

The following theorems show general estimates for Methods \ref{gen-alg-nonsm} and \ref{gen-alg-struc} which will be proved in the remainder of this section.


\begin{thm}
\label{gen-conv-nonsm}
Consider a non-smooth problem in the class $\mathcal{NSP}(g,\sigma_f)$.
Let $\{(\sz_{k-1},x_k,g_k,\hat{x}_k)\}_{k \geq 0}$ be generated by Method \ref{gen-alg-nonsm} associated with weight parameters $\{\lambda_k\}_{k \geq 0}$, scaling parameters $\{\beta_k\}_{k \geq -1}$. Then, for every $k \geq 0$, the estimate
\begin{equation}
\label{gen-est-nonsm}
f(\hat{x}_k) - f(x^*) + \sigma_f\xi(\sz_k,x^*) \leq \frac{\beta_kl_d(\sz_k;x^*)+C_k}{S_k}
\end{equation}
holds, where
\begin{equation}\label{C_k-nonsm}
C_k := \left\{ \begin{array}{lcl}
\frac{1}{2\sigma_d}\sum_{i=0}^k \frac{\lambda_i^2}{\beta_{i-1} + S_i\sigma_f}\norm{g_i}_*^2&&\textrm{for the classical method; and}\\
\frac{1}{2\sigma_d}\sum_{i=0}^k \frac{\lambda_i^2S_{i}}{\lambda_{i}^2\sigma_f + S_i(\beta_{i-1} + S_{i-1}\sigma_f)}\norm{g_i}_*^2 &&\textrm{for the modified method}.
\end{array} \right.
\end{equation}
Furthermore, for every $k \geq 0$, the above estimate holds even replacing the left hand side by $\frac{1}{S_k}\sum_{i=0}^k\lambda_if(x_i)-f(x^*) + \sigma_f\xi(\sz_k,x^*)$ or by $\min_{0\leq i\leq k}f(x_i)-f(x^*) + \sigma_f\xi(\sz_k,x^*)$ for the classical method.
\end{thm}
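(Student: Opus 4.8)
The plan is to establish the estimate by coupling the abstract Property \ref{framework-single} for the auxiliary functions $\{\sphi_k(x)\}$ with the specific update rules for $\{x_k\}$ and $\{\hat{x}_k\}$ in Method \ref{gen-alg-nonsm}, and then to control the error term generated at each step using Lemma \ref{prop-subprob} together with the boundedness of the subgradients. The key quantity to track is $\sphi_k(\sz_k)$. On the one hand, condition (A3) gives the upper bound
\[
\sphi_k(\sz_k) \leq \min_{x \in Q}\Big\{ \sum_{i=0}^k \lambda_i m_f(x_i;x) + \beta_k l_d(\sz_k;x) - S_k\sigma_f\xi(\sz_k,x)\Big\} \leq \sum_{i=0}^k\lambda_i f(x^*) + \beta_k l_d(\sz_k;x^*) - S_k\sigma_f\xi(\sz_k,x^*),
\]
where we used $m_f(x_i;x^*)\leq f(x^*)$. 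On the other hand, I would prove by induction a lower bound of the form $\sphi_k(\sz_k) \geq \sum_{i=0}^k \lambda_i f(x_i) - (\text{accumulated error}) \geq S_k f(\hat x_k) - (\text{accumulated error})$ for the classical method (using $m_f(x_{k+1};x_{k+1}) = f(x_{k+1})$ in the non-smooth model \eqref{nonsm-m_f}, and convexity of $f$ with the averaging definition of $\hat x_k$), and a corresponding bound $\sphi_k(\sz_k)\geq S_k f(\hat x_k) - C_k$ for the modified method. Combining the two bounds and dividing by $S_k$ yields \eqref{gen-est-nonsm}.

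The inductive step is where the two cases diverge and where the error term $C_k$ is accumulated. Starting from (A2),
\[
\sphi_{k+1}(\sz_{k+1}) \;\geq\; \sphi_k(\sz_k) + \lambda_{k+1}m_f(x_{k+1};\sz_{k+1}) + \beta_{k+1}d(\sz_{k+1}) - \beta_k l_d(\sz_k;\sz_{k+1}) + S_k\sigma_f\xi(\sz_k,\sz_{k+1}),
\]
and I would lower-bound the terms involving $d$ and $\xi$ using $\beta_{k+1}d(\sz_{k+1}) - \beta_k l_d(\sz_k;\sz_{k+1}) \geq \beta_k\xi(\sz_k,\sz_{k+1})$ (since $\beta_{k+1}\geq\beta_k$ and $d\geq 0$), so the Bregman terms collectively give at least $(\beta_k + S_k\sigma_f)\xi(\sz_k,\sz_{k+1}) \geq \frac{\sigma_d}{2}(\beta_k + S_k\sigma_f)\norm{\sz_k-\sz_{k+1}}^2$. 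The linear term $\lambda_{k+1}m_f(x_{k+1};\sz_{k+1})$ is handled differently in the two methods: for the classical method $x_{k+1}=\sz_k$, so $m_f(x_{k+1};\sz_{k+1}) = f(\sz_k) + \innprod{g_k}{\sz_{k+1}-\sz_k} + \sigma_f\xi(\sz_k,\sz_{k+1})$, and one bounds $\innprod{g_k}{\sz_{k+1}-\sz_k}\geq -\norm{g_k}_*\norm{\sz_{k+1}-\sz_k}$, completing a square in $\norm{\sz_k-\sz_{k+1}}$ of the form $\frac{\sigma_d}{2}(\beta_k+S_k\sigma_f+\lambda_{k+1}\sigma_f)\,t^2 - \lambda_{k+1}\norm{g_k}_* t$, whose minimum over $t\geq 0$ produces the term $-\frac{\lambda_{k+1}^2\norm{g_k}_*^2}{2\sigma_d(\beta_k + S_{k+1}\sigma_f)}$, matching the summand in $C_{k+1}$ (after shifting indices, since $S_{k+1}=S_k+\lambda_{k+1}$). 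For the modified method one must instead pass through the relation $f(x_{k+1})\leq \frac{S_k}{S_{k+1}}f(\hat x_k) + \frac{\lambda_{k+1}}{S_{k+1}}f(\sz_k)$ coming from convexity and $\hat x_{k+1}=x_{k+1}=(S_k\hat x_k + \lambda_{k+1}\sz_k)/S_{k+1}$; the extra factor $S_{k+1}/S_k$-type weighting is exactly what turns $\beta_{k-1}+S_k\sigma_f$ in the denominator into $\lambda_k^2\sigma_f/S_k + (\beta_{k-1}+S_{k-1}\sigma_f)$, i.e.\ $(\lambda_k^2\sigma_f + S_k(\beta_{k-1}+S_{k-1}\sigma_f))/S_k$ as in \eqref{C_k-nonsm}.

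The main obstacle I anticipate is bookkeeping the index shifts and the weighting factors in the modified-method case so that the accumulated error telescopes cleanly into precisely the stated $C_k$; getting the denominator $\lambda_i^2\sigma_f + S_i(\beta_{i-1}+S_{i-1}\sigma_f)$ right requires carefully tracking how much of the strong-convexity "budget" $(\beta_k + S_k\sigma_f)\xi(\sz_k,\cdot)$ from Lemma \ref{prop-subprob} is consumed by the proximal step versus carried forward, and how the rescaling by $S_k/S_{k+1}$ interacts with it. The final sentence of the theorem — replacing $f(\hat x_k)$ by $\frac{1}{S_k}\sum_{i=0}^k\lambda_i f(x_i)$ or by $\min_{0\leq i\leq k} f(x_i)$ for the classical method — follows immediately, since the lower bound on $\sphi_k(\sz_k)$ that the induction actually produces is $\sum_{i=0}^k\lambda_i f(x_i) - C_k$ before any application of Jensen's inequality, and $\sum_{i=0}^k\lambda_i f(x_i)\geq (\min_{0\leq i\leq k}f(x_i))\,S_k$ trivially; only in the modified case is the convexity step of $f$ genuinely needed, which is why the extra conclusion is stated only for the classical method.
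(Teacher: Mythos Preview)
Your overall strategy---upper-bounding $\sphi_k(\sz_k)$ via (A3) and lower-bounding it by $S_k f(\hat x_k)-C_k$ via an induction driven by (A2)---is exactly the paper's route (the paper names the inductive relation $(\rel_k)$ and packages the two halves as Lemma~\ref{R_k-bound} and Lemmas~\ref{rel-0}--\ref{rel-k-modif}). For the classical method your argument is correct up to an index slip: since $x_{k+1}=\sz_k$, one has $m_f(x_{k+1};\sz_{k+1})=f(x_{k+1})+\innprod{g_{k+1}}{\sz_{k+1}-\sz_k}+\sigma_f\xi(\sz_k,\sz_{k+1})$, with $g_{k+1}=g(x_{k+1})$, not $g_k$; with that fix the completed square produces precisely the $(k+1)$-st summand of $C_{k+1}$.

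For the modified method, however, your proposed mechanism does not work. You suggest reducing $(\rel_{k+1})$ to a bound involving $\lambda_{k+1}f(\sz_k)$ via $S_{k+1}f(x_{k+1})\le S_kf(\hat x_k)+\lambda_{k+1}f(\sz_k)$, but after inserting $(\rel_k)$ into (A2) you are left needing $\lambda_{k+1}m_f(x_{k+1};\sz_{k+1})+(\beta_k+S_k\sigma_f)\xi(\sz_k,\sz_{k+1})\ge \lambda_{k+1}f(\sz_k)-(C_{k+1}-C_k)$, and there is no way to compare $m_f(x_{k+1};\sz_{k+1})$ to $f(\sz_k)$: the model $m_f(x_{k+1};\cdot)$ is centered at $x_{k+1}\ne \sz_k$, and the only available subgradient is $g_{k+1}$, which gives inequalities in the wrong direction. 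The paper's fix is \emph{not} to use convexity of $f$ with the vertex $\sz_k$, but to first lower $f(\hat x_k)\ge m_f(x_{k+1};\hat x_k)$ and then use convexity of $m_f(x_{k+1};\cdot)$ to combine
\[
S_k\,m_f(x_{k+1};\hat x_k)+\lambda_{k+1}\,m_f(x_{k+1};\sz_{k+1})\ \ge\ S_{k+1}\,m_f\!\Big(x_{k+1};\tfrac{S_k\hat x_k+\lambda_{k+1}\sz_{k+1}}{S_{k+1}}\Big),
\]
the point on the right being $x'_{k+1}:=x_{k+1}+\tfrac{\lambda_{k+1}}{S_{k+1}}(\sz_{k+1}-\sz_k)$. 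Since $m_f(x_{k+1};x'_{k+1})-\sigma_f\xi(x_{k+1},x'_{k+1})=f(x_{k+1})+\innprod{g_{k+1}}{x'_{k+1}-x_{k+1}}$ and $\xi(\sz_k,\sz_{k+1})\ge \frac{\sigma_d}{2}\frac{S_{k+1}^2}{\lambda_{k+1}^2}\norm{x'_{k+1}-x_{k+1}}^2$, completing the square in $\norm{x'_{k+1}-x_{k+1}}$ with total curvature $\sigma_d S_{k+1}\big(\sigma_f+\tfrac{S_{k+1}(\beta_k+S_k\sigma_f)}{\lambda_{k+1}^2}\big)$ yields exactly the denominator $\lambda_{k+1}^2\sigma_f+S_{k+1}(\beta_k+S_k\sigma_f)$ in~\eqref{C_k-nonsm}. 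The ``bookkeeping'' you anticipated is thus not the obstacle; the obstacle is that the convex combination must be taken at the level of the model $m_f(x_{k+1};\cdot)$, with the new point $\sz_{k+1}$, rather than at the level of $f$ with the old point $\sz_k$.
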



\begin{thm}\label{gen-conv-struc}
Consider a structured problem in the class $\mathcal{SP}(m_f,\sigma_f,\bar\sigma_f,L,\delta)$.
Let $\{(\z_{k-1},\w_{k-1},x_k,\hat{x}_k)\}_{k \geq 0}$ be generated by Method \ref{gen-alg-struc} associated with weight parameters $\{\lambda_k\}_{k \geq 0}$, scaling parameters $\{\beta_k\}_{k \geq -1}$. Then, for every $k \geq 0$, the estimate
\begin{equation}\label{gen-est-struc}
f(\hat{x}_k) - f(x^*) + \sigma_f\xi(\z_k,x^*) \leq \frac{\beta_kl_d(\z_k;x^*)+C_k}{S_k}
\end{equation}
holds, where
\begin{equation} 
C_k := \left\{
\begin{array}{l}
\frac{1}{2}\sum_{i=0}^k \lambda_i\left(L(x_i) - \sigma_d \left(\bar\sigma_f+ \frac{\beta_{i-1}+S_{i-1}\sigma_f}{\lambda_i} \right)\right)\norm{\w_i-x_i}^2 + \sum_{i=0}^k\lambda_i\delta(x_i,\w_i)\\
\hspace{8truecm}\textrm{for the classical method; and}\\
\frac{1}{2}\sum_{i=0}^k S_i\left(L(x_i)-\sigma_d\left(\bar\sigma_f + \frac{S_{i}(\beta_{i-1}+S_{i-1}\sigma_f)}{\lambda_i^2}\right)\right)\norm{\hat{x}_i-x_i}^2+\sum_{i=0}^k S_i\delta(x_i,\hat{x}_i)\\
\hspace{8truecm}\textrm{for the modified method.}
\end{array} \right.
\end{equation}
Furthermore, for every $k \geq 0$, the above estimate holds even replacing the left hand side by $\frac{1}{S_k}\sum_{i=0}^k\lambda_if(\w_{i})-f(x^*) + \sigma_f\xi(\z_k,x^*)$ or by $\min_{0\leq i\leq k}f(\w_{i})-f(x^*) + \sigma_f\xi(\z_k,x^*)$ for the classical method.
\end{thm}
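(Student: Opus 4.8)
The plan is to mimic the structure already set up for the non-smooth case (Theorem \ref{gen-conv-nonsm}), replacing the subgradient inequality by the structured inequality (\ref{struc-ineq}), and to exploit Property \ref{framework-double} in place of Property \ref{framework-single}. The backbone is a telescoping estimate relating $S_k[f(\hat x_k)-f(x^*)]$ to the auxiliary-function values $\apsi_k(\w_k)$. First I would record, from (A3)/(B3), that for every $k$
\[
\apsi_k(\w_k)\le \sum_{i=0}^k\lambda_i m_f(x_i;x^*)+\beta_k l_d(\z_k;x^*)-S_k\sigma_f\xi(\z_k,x^*),
\]
and, since $\sigma_f\in\sigma(f)$ and $m_f(x_i;\cdot)\le f(\cdot)$ together with (\ref{st-conv-Breg}) at $x=x^*$, bound $\sum_{i=0}^k\lambda_i m_f(x_i;x^*)\le S_k f(x^*)$ (using $f(x)-f(x^*)\ge\sigma_f\xi(x^*,x)\ge 0$, or more precisely $m_f(x_i;x^*)\le f(x^*)$ directly from the lower-model property). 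Hence it suffices to prove the lower bound
\[
\apsi_k(\w_k)\ge S_k f(\hat x_k)+S_k\sigma_f\xi(\z_k,x^*)-\beta_k l_d(\z_k;x^*)\cdot 0 \;-\;C_k,
\]
i.e.\ to show $S_k f(\hat x_k)\le \apsi_k(\w_k)+C_k$ after cancelling the $\beta_k l_d$ and $\sigma_f\xi$ terms against those coming from (B3). Rearranging, the target reduces to establishing, for each method,
\[
S_k f(\hat x_k)\;\le\;\sum_{i=0}^k\lambda_i m_f(x_i;x^*)+\beta_k l_d(\z_k;x^*)-S_k\sigma_f\xi(\z_k,x^*)+C_k,
\]
which will follow once I prove an inductive one-step inequality.

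The core is therefore a single-step lemma of the form $S_{k+1}f(\hat x_{k+1})\le \apsi_{k+1}(\w_{k+1})+(C_{k+1}-C_k)+(\text{the }\apsi_k(\w_k)\text{ contribution})$, proved by induction on $k$. The mechanism: by the convexity of $f$ and the update $\hat x_{k+1}=(S_k\hat x_k+\lambda_{k+1}\w_{k+1})/S_{k+1}$, one gets $S_{k+1}f(\hat x_{k+1})\le S_k f(\hat x_k)+\lambda_{k+1}f(x_{k+1})+[\text{correction}]$; the correction is handled by applying (\ref{struc-ineq}) with $y=x_{k+1}$ at the two points $x=\hat x_{k+1}$ (modified) or $x=\w_{k+1}$ (classical), giving $\lambda_{k+1}f(\cdot)\le\lambda_{k+1}[m_f(x_{k+1};\cdot)-\bar\sigma_f\xi(x_{k+1};\cdot)]+\tfrac{L(x_{k+1})}{2}(\cdot)\norm{x_{k+1}-\cdot}^2+\lambda_{k+1}\delta(x_{k+1},\cdot)$ (with the weights $S_{k+1}$ vs.\ $\lambda_{k+1}$ distinguishing the two cases exactly as in (\ref{C_k-nonsm}) vs.\ the $C_k$ in Theorem \ref{gen-conv-struc}). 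Then (B0)–(B2) convert $\apsi_k(\w_k)\le\aphi_k(\z_k)$ and the defining recursion (B2) for $\apsi_{k+1}$ into the needed bound, and Lemma \ref{prop-subprob}-type strong convexity of $\aphi_k$ at $\z_k$ supplies the term $(\beta_{k-1}+S_{k-1}\sigma_f)\xi(\z_k,x)$ (or the $S_k$-weighted analogue) that combines with the H\"older/quadratic remainder $\tfrac{L}{2}\norm{\cdot-x_{k+1}}^2$ to yield precisely the coefficient $L(x_i)-\sigma_d(\bar\sigma_f+\cdots)$ appearing in $C_k$. The $\bar\sigma_f\xi$ term from (\ref{struc-ineq}) cancels part of the $\sigma_f\xi$ budget, which is why $\bar\sigma_f\in[0,\sigma_f]$ is required, and the residual $\sigma_f$-part is carried as $\sigma_f\xi(\z_k,x^*)$ on the left of (\ref{gen-est-struc}). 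Summing the telescoped inequality over $i=0,\dots,k$ and dividing by $S_k$ gives (\ref{gen-est-struc}).

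The variants at the end — replacing $f(\hat x_k)$ by $\tfrac1{S_k}\sum_i\lambda_i f(\w_i)$ or by $\min_{0\le i\le k}f(\w_i)$ for the classical method — come for free: in the classical case the telescoping already produces an upper bound on $\sum_{i=0}^k\lambda_i f(\w_i)$ directly (one never needs Jensen's inequality to collapse it to $\hat x_k$), and $\min_i f(\w_i)\le \tfrac1{S_k}\sum_i\lambda_i f(\w_i)$ since $\lambda_i>0$ and $S_k=\sum_{i=0}^k\lambda_i$. I expect the main obstacle to be bookkeeping the two different weight patterns (the $\lambda_i$-weighting of the classical method versus the $S_i$-weighting of the modified method) consistently through the induction, in particular getting the exact denominators $\lambda_i^2\sigma_f+S_i(\beta_{i-1}+S_{i-1}\sigma_f)$ etc.\ right; choosing which point ($\w_{k+1}$ vs.\ $\hat x_{k+1}$) to evaluate (\ref{struc-ineq}) at, and correctly using (B0) to pass between $\apsi$ and $\aphi$, is where the modified/classical dichotomy must be handled with care. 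The strong-convexity lower bound from Lemma \ref{prop-subprob} and the nonnegativity of $\delta(y,\cdot)$ and of $\xi$ are the only analytic inputs; everything else is algebraic rearrangement.
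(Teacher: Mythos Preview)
Your proposal is essentially correct and follows the paper's approach: the backbone is the inductive relation $(R_k)\colon S_kf(\hat x_k)\le\apsi_k(\w_k)+C_k$, verified via Lemmas \ref{rel-0}, \ref{rel-k-clas}(ii), \ref{rel-k-modif}(ii), and then fed into Lemma \ref{R_k-bound} using (B3); the $(\srel_k)$ variant handles the ``Furthermore'' clause exactly as you say.

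One wrinkle in the modified case deserves care. You write ``by the convexity of $f$ \dots\ one gets $S_{k+1}f(\hat x_{k+1})\le S_kf(\hat x_k)+\lambda_{k+1}f(x_{k+1})+[\text{correction}]$'' and then say to apply (\ref{struc-ineq}) at $x=\hat x_{k+1}$. These two moves are not compatible as stated: if you first split $f(\hat x_{k+1})$ by convexity of $f$, you produce $f(\w_{k+1})$ (not $f(x_{k+1})$), and applying (\ref{struc-ineq}) there yields the \emph{classical} error pattern $\lambda_{k+1}\|\w_{k+1}-x_{k+1}\|^2$, not the modified one $S_{k+1}\|\hat x_{k+1}-x_{k+1}\|^2$. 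The paper's route (Lemma \ref{rel-k-modif}(ii)) is to apply (\ref{struc-ineq}) \emph{first} at $\hat x_{k+1}$, obtaining $S_{k+1}f(\hat x_{k+1})\le S_{k+1}[m_f(x_{k+1};\hat x_{k+1})-\bar\sigma_f\xi(x_{k+1},\hat x_{k+1})]+\tfrac{S_{k+1}L(x_{k+1})}{2}\|\hat x_{k+1}-x_{k+1}\|^2+S_{k+1}\delta(x_{k+1},\hat x_{k+1})$, and only then split $S_{k+1}m_f(x_{k+1};\hat x_{k+1})\le S_k m_f(x_{k+1};\hat x_k)+\lambda_{k+1}m_f(x_{k+1};\w_{k+1})$ by convexity of $m_f(x_{k+1};\cdot)$, together with the lower-model property $m_f(x_{k+1};\hat x_k)\le f(\hat x_k)$. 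This is what produces the $S_i$-weighted coefficient $L(x_i)-\sigma_d\bigl(\bar\sigma_f+\tfrac{S_i(\beta_{i-1}+S_{i-1}\sigma_f)}{\lambda_i^2}\bigr)$. Also, the ``denominators $\lambda_i^2\sigma_f+S_i(\beta_{i-1}+S_{i-1}\sigma_f)$'' you mention belong to Theorem \ref{gen-conv-nonsm}, not here; in the structured case the quadratic and $\xi$-terms are combined additively via $\xi\ge\tfrac{\sigma_d}{2}\|\cdot\|^2$, with no Young-type minimization.
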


\begin{rem}
\label{CGM-gen-bound}
Method \ref{gen-alg-struc} with $\sigma_f=\bar\sigma_f=0$ and $\beta_k\equiv 0$ yields several versions of CGMs because the constructed auxiliary functions are non-negative linear combinations of constants and $\{m_f(x_i;x)\}_{i=0}^k$. In this case, Theorem \ref{gen-conv-struc} implies that the modified method ensures
\begin{equation} 
f(\hat{x}_k)-f(x^*)\leq \frac{C_k}{S_k} \leq \frac{\frac{1}{2}{\rm Diam}(Q)^2\sum_{i=0}^kL(x_i)\frac{\lambda_i^2}{S_i}}{S_k}+\frac{\sum_{i=0}^kS_i\delta(x_i,\hat{x}_i)}{S_k}
\end{equation}
for all $k \geq 0$, because $\norm{\hat{x}_i-x_i}^2=\frac{\lambda_i^2}{S_i^2}\norm{\w_i-\z_{i-1}}^2 \leq \frac{\lambda_i^2}{S_i^2}{\rm Diam(Q)}^2$. Note that, if $m_f(y;\cdot)$ is affine for each $y \in Q$, then the classical CGM (\ref{classical-CGM}) with $\tau_k:=\lambda_{k+1}/S_{k+1}$ and $\hat{x}_k:=x_k$ also admits a similar estimate\footnote{The proof of \cite[Theorem 5.3]{FG14} replacing the notation $(h(\cdot),\lambda_{k+1},\tilde{\lambda}_{k+1},L_{k+1},\delta_{k+1},\bar{\alpha}_{k+1},\beta_{k+1},\alpha_k)$ of \cite{FG14} by $(-f(\cdot),x_k,\sz_k,L(x_k),\delta(x_k,x_{k+1}),\tau_k,S_k/\lambda_0,\lambda_k/\lambda_0)$ for $k \geq 0$ shows the desired estimate because showing the result uses the assumption \cite[eq.(52)]{FG14} with $(L,\delta)=(L_{k+1},\delta_{k+1})$ only at $(\lambda,\bar\lambda)=(\lambda_{k+2},\lambda_{k+1})$, which corresponds to our assumption (\ref{struc-ineq}) at $(x,y)=(x_k,x_{k+1})$.}
\begin{equation}\label{clas-CGM-bound}
f(x_k)-f(x^*) \leq \frac{\lambda_0[f(x_0)-m_f(x_0;\sz_0)]}{S_k}+\frac{\frac{1}{2}{\rm Diam}(Q)^2\sum_{i=1}^kL(x_{i-1})\frac{\lambda_i^2}{S_i}}{S_k}+\frac{\sum_{i=1}^kS_i\delta(x_{i-1},x_i)}{S_k}.
\end{equation}
\end{rem}


\subsection{Key strategy of the proof}

Under the assumptions of Theorems \ref{gen-conv-nonsm} or \ref{gen-conv-struc}, we will prove by induction that the relation
\[ (\rel_k) \quad S_kf(\hat{x}_k) \leq \apsi_k(\w_k) + C_k \]
holds for every $k \geq 0$, which is used to prove the estimates (\ref{gen-est-nonsm}) and (\ref{gen-est-struc}).
Furthermore, the relations 
\[ (\nrel_k) ~~ \sum_{i=0}^k \lambda_if(x_i) \leq \apsi_k(\w_k) + C_k \quad \textrm{ and } \quad (\srel_k) ~~ \sum_{i=0}^k \lambda_if(\w_i) \leq \apsi_k(\w_k) + C_k\]
are also useful to prove the latter assertion of Theorems \ref{gen-conv-nonsm} and \ref{gen-conv-struc}, respectively.

These relations yield the following estimate.

\begin{lem}\label{R_k-bound}
Suppose that a sequence $\{\hat{x}_k\}_{k \geq 0} \subset Q$ satisfies the relation $(\rel_k)$ for a coupled sequence $\{(\aphi_k(x),\apsi_k(x))\}_{k \geq -1}$ of auxiliary functions associated with weight parameters $\{\lambda_k\}_{k \geq 0}$, scaling parameters $\{\beta_k\}_{k \geq -1}$, and test points $\{x_k\}_{k \geq 0}$.
If the condition (B3) in Property \ref{framework-double} holds for a convexity parameter $\sigma_f\geq 0$ and for some lower approximation model $m_f(y;x)$ of $f(x)$,
then we have
\begin{equation}\label{general-convergence}
f(\hat{x}_k) - f(x) + \sigma_f\xi(\z_k,x) \leq \frac{\beta_k l_d(\z_k;x) + C_k}{S_k},\quad \forall x \in Q.
\end{equation}
\end{lem}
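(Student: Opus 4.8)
The plan is to unfold the relation $(\rel_k)$ together with the lower bound (B3) and the strong-convexity inequality (\ref{st-conv-Breg}) to isolate $f(\hat{x}_k)$. First I would start from $(\rel_k)$, namely $S_k f(\hat{x}_k) \leq \apsi_k(\w_k) + C_k$, and substitute the upper bound for $\apsi_k(\w_k)$ coming from (B3):
\[
S_k f(\hat{x}_k) \;\leq\; \min_{x \in Q}\Bigl\{\,\sum_{i=0}^k \lambda_i m_f(x_i;x) + \beta_k l_d(\z_k;x) - S_k \sigma_f \xi(\z_k,x)\Bigr\} + C_k.
\]
Since the bound holds for the minimizer, it holds for every fixed $x \in Q$; fix an arbitrary such $x$.

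Next I would lower-bound $\sum_{i=0}^k \lambda_i m_f(x_i;x)$ by $S_k f(x) - \sigma_f\big(\text{Bregman terms}\big)$. The key fact is that $m_f(x_i;\cdot)$ is a lower approximation model, so $m_f(x_i;x) \leq f(x)$; but I need the reverse direction, so instead I use $\sigma_f \in \sigma(m_f(x_i;\cdot))$: by (\ref{st-conv-Breg}) applied with $\psi = m_f(x_i;\cdot)$, $y = x^*$ (or the relevant reference point) we can relate $m_f(x_i;x)$ to its value and derivative; more directly, since $m_f(x_i;x)\le f(x)$ for all $x$, we simply have $\sum_{i=0}^k\lambda_i m_f(x_i;x) \le S_k f(x)$. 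Wait — that is the wrong inequality direction for the left side. The correct route is: we already have $S_k f(\hat{x}_k)$ bounded above by the displayed quantity, and we want to turn $\sum \lambda_i m_f(x_i;x)$ into something $\le S_k f(x)$, which is exactly $m_f(x_i;x)\le f(x)$ summed with weights $\lambda_i$. So the chain is
\[
S_k f(\hat{x}_k) \;\leq\; S_k f(x) + \beta_k l_d(\z_k;x) - S_k\sigma_f\xi(\z_k,x) + C_k,
\]
and dividing by $S_k > 0$ and rearranging gives exactly (\ref{general-convergence}). This is essentially the whole argument.

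The only genuine subtlety — and the step I expect to be the main (minor) obstacle — is making sure the direction of each inequality is right: the definition of $m_f$ as a \emph{lower} approximation model ($m_f(y;x)\le f(x)$) is used with its "good" sign here because the $\sum\lambda_i m_f(x_i;x)$ term appears on the side being bounded \emph{above}, so replacing it by the larger $S_k f(x)$ is legitimate; no appeal to $\sigma_f\in\sigma(m_f)$ or (\ref{st-conv-Breg}) is actually needed for this lemma, those are needed earlier to establish (B3) and $(\rel_k)$ themselves. I would also note that the hypothesis $\hat{x}_k\in Q$ is what allows $f(\hat{x}_k)$ to be finite and the Bregman term $\xi(\z_k,x)$ to be well defined, and that $x\in Q$ is arbitrary so the conclusion holds for all $x\in Q$ as stated. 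The term $C_k$ is carried along untouched; its concrete form is irrelevant to this lemma, which is purely the "combine $(\rel_k)$ with (B3)" bookkeeping step that will later be specialized by plugging in the two forms of $C_k$ from Theorems \ref{gen-conv-nonsm} and \ref{gen-conv-struc}.
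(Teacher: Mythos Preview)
Your argument is correct and matches the paper's proof essentially line for line: combine $(\rel_k)$ with (B3) to bound $S_kf(\hat{x}_k)$ by $\sum_{i=0}^k\lambda_i m_f(x_i;x)+\beta_k l_d(\z_k;x)-S_k\sigma_f\xi(\z_k,x)+C_k$, then use $m_f(x_i;x)\le f(x)$ and divide by $S_k$. Your observation that neither $\sigma_f\in\sigma(m_f)$ nor (\ref{st-conv-Breg}) is actually needed here is also right---only the lower-approximation property of $m_f$ is used.
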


\begin{proof}
The assertion follows from the condition (B3) and the relation $(\rel_k)$; for any $x \in Q$, we have
\[ S_kf(\hat{x}_k) \leq \sum_{i=0}^k \lambda_im_f(x_i;x) + \beta_k l_d(\z_k;x) - S_k\sigma_f\xi(\z_k,x) +C_k \leq S_kf(x) + \beta_k l_d(\z_k;x) - S_k\sigma_f\xi(\z_k,x)+C_k. \]
\end{proof}

\begin{rem}\label{remark-R_k}
(1) Analogues of Lemma \ref{R_k-bound} easily show that $(\nrel_k)$ and (B3) imply the inequality
\[ \min_{0\leq i \leq k}f(x_i)-f(x) + \sigma_f\xi(\z_k,x) \leq \frac{1}{S_k}\sum_{i=0}^k\lambda_if(x_i)-f(x)+ \sigma_f\xi(\z_k,x) \leq \frac{\beta_kl_d(\z_k;x)+C_k}{S_k} \]
for $x \in Q$. The conditions $(\srel_k)$ and (B3) also conclude the same replacing $x_i$ by $\w_i$.
\\
(2) When $\sigma_f>0$, (\ref{general-convergence}) provides bounds for the distances to $x^*$ from $\hat{x}_k$ and $\z_k$: According to the facts $f(x)-f(x^*)\geq \sigma_f\xi(x^*,x)$ and $\xi(x,y)\geq \frac{\sigma_d}{2}\norm{x-y}^2$ for $x,y \in Q$, the bound (\ref{general-convergence}) implies
\[ \min\{\norm{\hat{x}_k-x^*}^2,\norm{\z_k-x^*}^2\} \leq \frac{1}{2}\norm{\hat{x}_k-x^*}^2+\frac{1}{2}\norm{\z_k-x^*}^2 \leq \frac{\beta_kl_d(\z_k;x^*)+C_k}{\sigma_f\sigma_d S_k}. \]
\end{rem}

Lemma \ref{R_k-bound} and Remark \ref{remark-R_k} (1) shows that, in order to complete Theorems \ref{gen-conv-nonsm} and \ref{gen-conv-struc}, it suffices to prove $(\rel_k)$ and its variants $(\nrel_k)$ or $(\srel_k)$. We now turn to the inductive proof of them.


\subsection{Validity of $(\rel_k),~(\nrel_k),$ and $(\srel_k)$ when $k=0$}

We start the proof of the case $k=0$ for our induction.
Note that the assumptions of (i) and (ii) in the following lemma are exactly the situations of the initialization step (0) in Methods \ref{gen-alg-nonsm} and \ref{gen-alg-struc}, respectively.

\begin{lem}\label{rel-0}
%
(i)
Consider a non-smooth problem in the class $\mathcal{NSP}(g,\sigma_f)$ and let $\{(\aphi_k(x),\apsi_k(x))\}_{k\geq -1}$ be a coupled sequence of auxiliary functions satisfying Property \ref{framework-double} associated with weight parameters $\{\lambda_k\}_{k \geq 0}$, scaling parameters $\{\beta_k\}_{k \geq -1}$, and test points $\{x_k\}_{k \geq 0}$.
Then, the relation $(\rel_0)\equiv (\nrel_0)$ is satisfied with $\hat{x}_0:=x_0$ and
\begin{equation}\label{nonsm-rel-0}
C_0 := \frac{1}{2}\frac{\lambda_0^2}{\sigma_d(\lambda_0\sigma_f + \beta_{-1})}\norm{g_0}_*^2.
\end{equation}
%
\noindent
(ii)
Consider a structured problem in the class $\mathcal{SP}(m_f,\sigma_f,\bar\sigma_f,L,\delta)$ and let $\{(\aphi_k(x),\apsi_k(x))\}_{k\geq -1}$ be a coupled sequence of auxiliary functions satisfying Property \ref{framework-double} associated with weight parameters $\{\lambda_k\}_{k \geq 0}$, scaling parameters $\{\beta_k\}_{k \geq -1}$, and test points $\{x_k\}_{k \geq 0}$.
Then, the relation $(\rel_0)\equiv(\srel_0)$ is satisfied with $\hat{x}_0:=\w_0$ and
\begin{equation}\label{struc-rel-0}
C_0 := \lambda_0\left(\frac{L(x_0)}{2}-\frac{\sigma_d}{2}\left(\bar\sigma_f+\frac{\beta_{-1}}{\lambda_0}\right)\right)\norm{\w_0-x_0}^2 + \lambda_0 \delta(x_0,\hat{x}_0).
\end{equation}
\end{lem}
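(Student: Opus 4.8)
The plan is to establish the base case $k=0$ of the induction by directly unwinding the definitions of $\hat{x}_0$ in Methods~\ref{gen-alg-nonsm} and \ref{gen-alg-struc} and invoking the structural conditions (B1)--(B3) together with the problem data. Recall that in both methods the initialization sets $x_0 = \z_{-1} = \w_{-1} = \argmin_{x \in Q}d(x)$, so that by (B1) we have $\apsi_{-1}(\w_{-1}) = 0$ and $\nabla d(x_0)$ is a subgradient witnessing $\min_{x\in Q}l_d(\z_{-1};x) = \min_{x\in Q}d(x) = 0$. Condition (B2) at $k=-1$ then gives, for all $x \in Q$,
\[
\apsi_0(x) \ge \aphi_{-1}(\z_{-1}) + \lambda_0 m_f(x_0;x) + \beta_0 d(x) - \beta_{-1}l_d(\z_{-1};x) + S_{-1}\sigma_f\xi(\z_{-1},x),
\]
and since $S_{-1} = 0$ and $\aphi_{-1}(\z_{-1}) = \aphi_{-1}(x_0) \ge 0$ (actually $=0$ since $\aphi_{-1}(x)=\beta_{-1}d(x)$ and $d(x_0)=0$ in the concrete constructions, but more robustly $\apsi_{-1}(\w_{-1})=0$ and (B0) give what is needed), the lower bound on $\apsi_0$ simplifies substantially. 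The key point is that $\apsi_0(\w_0)$, being the minimum of $\apsi_0$ over $Q$, can be bounded below by minimizing the right-hand side, which is a strongly convex function with convexity parameter at least $\lambda_0\sigma_f\sigma_d + \beta_{-1}\sigma_d$ in the Euclidean-type sense (via $\sigma_f \in \sigma(m_f(x_0;\cdot))$ and the strong convexity of $d$).

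For part (i), the non-smooth case, $m_f(x_0;x) = f(x_0) + \innprod{g_0}{x-x_0} + \sigma_f\xi(x_0,x)$, and with $\hat{x}_0 := x_0$ the relation $(\rel_0)$ reads $S_0 f(x_0) = \lambda_0 f(x_0) \le \apsi_0(\w_0) + C_0$. First I would write $\apsi_0(\w_0) \ge \min_{x\in Q}\{\lambda_0 m_f(x_0;x) + \beta_0 d(x)\}$ using the simplified (B2) bound, then further lower-bound by dropping the term $\beta_0 d(x) - \beta_{-1}l_d(\z_{-1};x) \ge \beta_0 d(x) - \beta_{-1}d(x) \ge 0$ if convenient, or more carefully keep $\beta_{-1}$ to match the stated $C_0$. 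Expanding $\lambda_0 m_f(x_0;x) = \lambda_0 f(x_0) + \lambda_0\innprod{g_0}{x-x_0} + \lambda_0\sigma_f\xi(x_0,x)$ and using $(\lambda_0\sigma_f + \beta_{-1})\xi(x_0,x) \ge \frac{\sigma_d(\lambda_0\sigma_f+\beta_{-1})}{2}\norm{x-x_0}^2$, the minimization over $x$ of the affine-plus-quadratic lower bound in $\norm{x-x_0}$ is a routine completion of squares yielding exactly $-\frac{\lambda_0^2\norm{g_0}_*^2}{2\sigma_d(\lambda_0\sigma_f+\beta_{-1})} = -C_0$, which gives $(\rel_0)$. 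Since $\hat{x}_0 = x_0$ here, $(\nrel_0)$ coincides with $(\rel_0)$ trivially.

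For part (ii), the structured case, $\hat{x}_0 := \w_0 = \argmin_{x\in Q}\apsi_0(x)$, so $(\rel_0)$ reads $\lambda_0 f(\w_0) \le \apsi_0(\w_0) + C_0$; equivalently I must show $\lambda_0 f(\w_0) - \apsi_0(\w_0) \le C_0$. The approach is to combine the simplified lower bound on $\apsi_0(\w_0)$ from (B2) evaluated at $x = \w_0$ itself with the structured-problem inequality (\ref{struc-ineq}) at $(x,y) = (\w_0, x_0)$, namely $f(\w_0) \le m_f(x_0;\w_0) - \bar\sigma_f\xi(x_0,\w_0) + \frac{L(x_0)}{2}\norm{x_0-\w_0}^2 + \delta(x_0,\w_0)$. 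Then $\lambda_0 f(\w_0) - \apsi_0(\w_0) \le \lambda_0[m_f(x_0;\w_0) - \bar\sigma_f\xi(x_0,\w_0) + \tfrac{L(x_0)}{2}\norm{\w_0-x_0}^2 + \delta(x_0,\w_0)] - [\lambda_0 m_f(x_0;\w_0) + \beta_0 d(\w_0) - \beta_{-1}l_d(\z_{-1};\w_0)]$; the $\lambda_0 m_f(x_0;\w_0)$ terms cancel, $\beta_0 d(\w_0) - \beta_{-1}l_d(\z_{-1};\w_0) \ge (\beta_0 - \beta_{-1})d(\w_0) + \beta_{-1}\xi(\z_{-1},\w_0) \ge \beta_{-1}\xi(x_0,\w_0) \ge \frac{\beta_{-1}\sigma_d}{2}\norm{\w_0-x_0}^2$ since $\beta_0 \ge \beta_{-1}$ and $\z_{-1} = x_0$, and $\xi(x_0,\w_0) \ge \frac{\sigma_d}{2}\norm{\w_0-x_0}^2$, leaving exactly $\lambda_0(\frac{L(x_0)}{2} - \frac{\sigma_d}{2}(\bar\sigma_f + \frac{\beta_{-1}}{\lambda_0}))\norm{\w_0-x_0}^2 + \lambda_0\delta(x_0,\w_0) = C_0$ as claimed; again $(\srel_0)$ is identical to $(\rel_0)$ since $\hat{x}_0 = \w_0$. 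The main obstacle, though minor, is bookkeeping the $\beta_{-1}$ and $\bar\sigma_f$ terms carefully so the quadratic coefficients line up precisely with the stated $C_0$ — in particular making sure the use of (B2) (which in the concrete constructions holds with equality for the hybrid/MD models) does not lose the $\beta_{-1}$ contribution, and correctly applying $\xi(\z_{-1},x) = \xi(x_0,x)$ together with $l_d(\z_{-1};x) = d(x) - \xi(x_0,x)$.
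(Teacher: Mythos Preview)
Your proposal is correct and follows essentially the same approach as the paper: evaluate (B2) at $k=-1$ (using $S_{-1}=0$, $\z_{-1}=x_0$, and (B0)+(B1) to get $\aphi_{-1}(\z_{-1})\ge\apsi_{-1}(\w_{-1})=0$), rewrite $\beta_0 d(x)-\beta_{-1}l_d(\z_{-1};x)\ge\beta_{-1}\xi(x_0,x)$, and then use strong convexity of $\xi$ together with either Young's inequality (part (i)) or the structural inequality~(\ref{struc-ineq}) (part (ii)). The only cosmetic difference is that the paper packages both cases via a free parameter $\sigma\in\{\sigma_f,\bar\sigma_f\}$ and records the intermediate bound $\apsi_{k+1}(\w_{k+1})\ge\apsi_k(\w_k)+\lambda_{k+1}m_f(x_{k+1};\w_{k+1})+(\beta_k+S_k\sigma_f)\xi(\z_k,\w_{k+1})$ for later reuse in Lemmas~\ref{rel-k-clas} and~\ref{rel-k-modif}.
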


\begin{proof}
Note that (B0) implies that $\aphi_k(\sz_k)=\min_{x \in Q}\aphi_k(x)\geq \min_{x \in Q}\apsi_k(x)=\apsi_k(\w_k)$.
Since $\{\beta_k\}$ is non-decreasing, using (B2) with $x=\w_{k+1}$ yields that
\begin{eqnarray}
\apsi_{k+1}(\w_{k+1}) & \geq & \aphi_k(\z_k)+\lambda_{k+1}m_f(x_{k+1};\w_{k+1})+(\beta_{k}+S_k\sigma_f)\xi(\z_k,\w_{k+1})\nonumber\\
& \geq & \apsi_k(\w_k)+\lambda_{k+1}m_f(x_{k+1};\w_{k+1})+(\beta_{k}+S_k\sigma_f)\xi(\z_k,\w_{k+1}) \label{min-aux-rel}
\end{eqnarray}
for every $k \geq -1$. In the case $k=-1$, the conditions (B1), $S_{-1}=0$, and $\z_{-1}=x_0$ lead (\ref{min-aux-rel}) to
\begin{eqnarray}
\apsi_0(\w_0) & \geq & \lambda_0[m_f(x_0;\w_0) - \sigma\xi(x_0,\w_0) + (\sigma+\beta_{-1}/\lambda_0)\xi(x_0,\w_0)]\nonumber\\
&\geq& \lambda_0\left[ m_f(x_0;\w_0) - \sigma\xi(x_0,\w_0) + \frac{\sigma_d}{2}\left(\sigma + \frac{\beta_{-1}}{\lambda_0}\right)\norm{\w_0-x_0}^2 \right]
\label{rel-0-proof-gen}
\end{eqnarray}
for any $\sigma\geq 0$.
Let us firstly show (ii). Letting $\sigma:=\bar\sigma_f$, the settings $\hat{x}_0=\w_0$ and (\ref{struc-rel-0}) yields
\[ \apsi_0(\w_0) + C_0 \stackrel{(\ref{rel-0-proof-gen})}{\geq} \lambda_0\left[ m_f(x_0;\w_0) - \bar\sigma_f\xi(x_0,\hat{x}_0) + \frac{L(x_0)}{2}\norm{\hat{x}_0-x_0}^2 + \delta(x_0,\hat{x}_0) \right] \geq \lambda_0 f(\hat{x}_0) \]
which proves the relation $(\rel_0)$.

It remains to prove (i). By the definition of $m_f(\cdot;\cdot)$ for the non-smooth case, the inequality (\ref{rel-0-proof-gen}) with $\sigma:=\sigma_f$ implies
\begin{eqnarray*}
\apsi_0(\w_0) &\stackrel{(\ref{rel-0-proof-gen})}{\geq}& \lambda_0 \left[ f(x_0) + \innprod{g_0}{\w_0-x_0} + \frac{\sigma_d}{2}\left(\sigma_f + \frac{\beta_{-1}}{\lambda_0}\right)\norm{\w_0-x_0}^2 \right]\\
&=&\lambda_0 f(x_0) + \innprod{\lambda_0 g_0}{\w_0-x_0} + \frac{\sigma_d}{2}\left(\lambda_0\sigma_f + \beta_{-1}\right)\norm{\w_0-x_0}^2 \\
&\geq& \lambda_0f(x_0) - \frac{1}{2}\frac{\lambda_0^2}{\sigma_d (\lambda_0\sigma_f + \beta_{-1})}\norm{g_0}_*^2,
\end{eqnarray*}
where the last inequality is due to the basic fact
\begin{equation}\label{Young's-ineq}
\frac{1}{2}\norm{x}^2 + \frac{1}{2}\norm{s}_*^2 \geq \innprod{s}{x} \textrm{ for } x \in E, ~s \in E^*.
\end{equation}
This means that the relation $(\rel_0)$ is satisfied with the setting $\hat{x}_0 = x_0$ and (\ref{nonsm-rel-0}).
\end{proof}


\subsection{Validity of $(\rel_k),~(\nrel_k)$, and $(\srel_k)$ for the classical method when $k > 0$}

Let us complete our induction for the classical method. The items (i) and (ii) in the following lemma correspond to the $k$-th iteration of the classical method in Methods \ref{gen-alg-nonsm} and \ref{gen-alg-struc}, respectively.

\begin{lem}\label{rel-k-clas}
%
(i)
Consider a non-smooth problem in the class $\mathcal{NSP}(g, \sigma_f)$ and let $\{(\aphi_k(x),\apsi_k(x))\}_{k\geq -1}$ be a coupled sequence of auxiliary functions satisfying Property \ref{framework-double} associated with weight parameters $\{\lambda_k\}_{k \geq 0}$, scaling parameters $\{\beta_k\}_{k \geq -1}$, and test points $\{x_k\}_{k \geq 0}$.
Suppose for $k \geq 0$ that the relation $(\rel_k)$ is satisfied for some $\hat{x}_k\in Q$, $C_k \geq 0$.
If the relation $x_{k+1}=\z_k$ holds, then the relation $(\rel_{k+1})$ is satisfied with $\hat{x}_{k+1}:=\frac{S_k\hat{x}_k+\lambda_{k+1}x_{k+1}}{S_{k+1}}$ and
\begin{equation}\label{nonsm-rel-k-clas}
C_{k+1}:=C_k + \frac{1}{2\sigma_d}\frac{\lambda_{k+1}^2}{\beta_k + S_{k+1}\sigma_f}\norm{g_{k+1}}_*^2.
\end{equation}
Furthermore, if $(\nrel_k)$ is satisfied, then so is $(\nrel_{k+1})$ with the same settings of $x_{k+1}$ and $C_{k+1}$.
%
\\[3truemm]
(ii)
Consider a structured problem in the class $\mathcal{SP}(m_f,\sigma_f,\bar\sigma_f,L,\delta)$ and let $\{(\aphi_k(x),\apsi_k(x))\}_{k\geq -1}$ be a coupled sequence of auxiliary functions satisfying Property \ref{framework-double} associated with weight parameters $\{\lambda_k\}_{k \geq 0}$, scaling parameters $\{\beta_k\}_{k \geq -1}$, and test points $\{x_k\}_{k \geq 0}$.
Suppose for $k \geq 0$ that the relation $(\rel_k)$ is satisfied for some $\hat{x}_k\in Q$, $C_k \geq 0$.
If the relation $x_{k+1}=\z_k$ holds, then the relation $(\rel_{k+1})$  is satisfied with $\hat{x}_{k+1}:=\frac{S_k\hat{x}_k+\lambda_{k+1}\w_{k+1}}{S_{k+1}}$ and
\begin{equation*}
C_{k+1} := C_k + \lambda_{k+1}\left( \frac{L(x_{k+1})}{2} - \frac{\sigma_d}{2}\left(\bar\sigma_f+\frac{\beta_k + S_k\sigma_f}{\lambda_{k+1}}\right) \right)\norm{\w_{k+1}-x_{k+1}}^2 +\lambda_{k+1}\delta(x_{k+1},\w_{k+1}).
\end{equation*}
Furthermore, if $(\srel_k)$ is satisfied, then so is $(\srel_{k+1})$ with the same settings of $x_{k+1}$ and $C_{k+1}$.
\end{lem}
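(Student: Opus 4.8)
The plan is to run the induction step by exploiting the common skeleton of items (i) and (ii): bound $S_{k+1}f(\hat{x}_{k+1})$ from above by splitting it via convexity of $f$ together with the hypothesis $(\rel_k)$, bound $\apsi_{k+1}(\w_{k+1})$ from below with inequality (\ref{min-aux-rel}) (already established for all $k\ge -1$ in the proof of Lemma~\ref{rel-0}), and reconcile the two using the relevant model inequality at the point $\w_{k+1}$. The classical update is used only through the identity $x_{k+1}=\z_k$, which turns (\ref{min-aux-rel}) into
\[ \apsi_{k+1}(\w_{k+1}) \ge \apsi_k(\w_k) + \lambda_{k+1}m_f(x_{k+1};\w_{k+1}) + (\beta_k+S_k\sigma_f)\,\xi(x_{k+1},\w_{k+1}). \]
Since $S_{k+1}=S_k+\lambda_{k+1}$, the point $\hat{x}_{k+1}$ is a convex combination in $Q$ of $\hat{x}_k$ and $\w_{k+1}$ (item (ii)) or of $\hat{x}_k$ and $x_{k+1}$ (item (i)); convexity of $f$ and $(\rel_k)$ then give $S_{k+1}f(\hat{x}_{k+1})\le \apsi_k(\w_k)+C_k+\lambda_{k+1}f(\w_{k+1})$, resp. $+\,\lambda_{k+1}f(x_{k+1})$. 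For the $(\srel_k)$ version of (ii) (resp. the $(\nrel_k)$ version of (i)) I would instead write $\sum_{i=0}^{k+1}\lambda_i f(\w_i)=\sum_{i=0}^{k}\lambda_i f(\w_i)+\lambda_{k+1}f(\w_{k+1})$ (resp. with $x_i$ in place of $\w_i$) and apply the hypothesis; everything after this point is identical.

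For item (ii), I would then apply the structural inequality (\ref{struc-ineq}) at $(x,y)=(\w_{k+1},x_{k+1})$ to replace $f(\w_{k+1})$ by $m_f(x_{k+1};\w_{k+1})-\bar\sigma_f\xi(x_{k+1},\w_{k+1})+\tfrac{L(x_{k+1})}{2}\norm{\w_{k+1}-x_{k+1}}^2+\delta(x_{k+1},\w_{k+1})$, substitute the displayed lower bound for $\apsi_k(\w_k)+\lambda_{k+1}m_f(x_{k+1};\w_{k+1})$, and bound the surviving Bregman term via $\xi(x_{k+1},\w_{k+1})\ge\tfrac{\sigma_d}{2}\norm{\w_{k+1}-x_{k+1}}^2$ against the coefficient $\beta_k+S_k\sigma_f+\lambda_{k+1}\bar\sigma_f$. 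Grouping the $\norm{\w_{k+1}-x_{k+1}}^2$ and $\delta$ contributions reproduces exactly the stated $C_{k+1}$, giving $(\rel_{k+1})$; the parallel computation yields $(\srel_{k+1})$.

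For item (i), the only change is to use the explicit form (\ref{nonsm-m_f}) of $m_f$ to write $f(x_{k+1})=m_f(x_{k+1};\w_{k+1})-\innprod{g_{k+1}}{\w_{k+1}-x_{k+1}}-\sigma_f\xi(x_{k+1},\w_{k+1})$. After inserting this and the lower bound for $\apsi_k(\w_k)+\lambda_{k+1}m_f(x_{k+1};\w_{k+1})$, the Bregman terms merge via $(\beta_k+S_k\sigma_f)+\lambda_{k+1}\sigma_f=\beta_k+S_{k+1}\sigma_f$ into
\[ S_{k+1}f(\hat{x}_{k+1})\le \apsi_{k+1}(\w_{k+1})+C_k-(\beta_k+S_{k+1}\sigma_f)\,\xi(x_{k+1},\w_{k+1})-\lambda_{k+1}\innprod{g_{k+1}}{\w_{k+1}-x_{k+1}}. \]
Bounding $\xi\ge\tfrac{\sigma_d}{2}\norm{\cdot}^2$ and applying (\ref{Young's-ineq}) rescaled by the factor $(\beta_k+S_{k+1}\sigma_f)\sigma_d$ absorbs the last two terms into $\tfrac{1}{2\sigma_d}\tfrac{\lambda_{k+1}^2}{\beta_k+S_{k+1}\sigma_f}\norm{g_{k+1}}_*^2$, which gives $(\rel_{k+1})$ with $C_{k+1}$ as in (\ref{nonsm-rel-k-clas}); $(\nrel_{k+1})$ follows from the same chain of inequalities.

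The individual steps are elementary, so I expect the difficulty to be entirely in the bookkeeping: (a) verifying that the coefficients of the Bregman and squared-norm terms telescope cleanly into $\beta_k+S_{k+1}\sigma_f$ by means of $S_{k+1}=S_k+\lambda_{k+1}$; (b) choosing the rescaling in Young's inequality so that the residual is exactly the increment $C_{k+1}-C_k$ with no wasted slack (slack here would weaken the convergence rates derived later); and (c) ensuring $\beta_k+S_{k+1}\sigma_f>0$ so the division is legitimate, which holds whenever $\beta_{-1}>0$ or $\sigma_f>0$, the only cases of interest in the non-smooth setting. No idea beyond those already present in the proof of Lemma~\ref{rel-0} is required.
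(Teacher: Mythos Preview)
Your proposal is correct and follows essentially the same approach as the paper: both arguments combine the lower bound (\ref{min-aux-rel}) (with $\z_k$ replaced by $x_{k+1}$), the strong convexity bound $\xi\geq\frac{\sigma_d}{2}\norm{\cdot}^2$, the structural inequality (\ref{struc-ineq}) for (ii) or the explicit form of $m_f$ plus (\ref{Young's-ineq}) for (i), and finally convexity of $f$ together with $(\rel_k)$. The only difference is the order of operations---the paper first derives the clean intermediate inequality $\apsi_{k+1}(\w_{k+1})+C_{k+1}\ge\apsi_k(\w_k)+C_k+\lambda_{k+1}f(\cdot)$ and applies $(\rel_k)$ and convexity at the end, whereas you apply them at the start---but the two computations are identical up to this rearrangement.
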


\begin{proof}
Using (\ref{min-aux-rel}) and the relation $x_{k+1}=\z_k$ imply for any $\sigma \geq 0$ that
\begin{eqnarray*}
\apsi_{k+1}(\w_{k+1})
&\geq&\apsi_k(\w_k) + \lambda_{k+1}m_f(x_{k+1};\w_{k+1}) + (\beta_k + S_k\sigma_f)\xi(\z_k,\w_{k+1})\\
&=& \apsi_k(\w_k) \\&&~~+ \lambda_{k+1}\left([m_f(x_{k+1};\w_{k+1})-\sigma\xi(x_{k+1},\w_{k+1})] + \left(\sigma + \frac{\beta_k + S_k\sigma_f}{\lambda_{k+1}}\right)\xi(x_{k+1},\w_{k+1}) \right)\\
&\geq& \apsi_k(\w_k) \\&& + \lambda_{k+1}\left( [m_f(x_{k+1};\w_{k+1})-\sigma\xi(x_{k+1},\w_{k+1})] + \frac{\sigma_d}{2}\left(\sigma+\frac{\beta_k + S_k\sigma_f}{\lambda_{k+1}}\right)\norm{\w_{k+1}-x_{k+1}}^2 \right).
\end{eqnarray*}
For the structured problems, letting $\sigma:=\bar\sigma_f$ and the definition of $C_{k+1}$ in (ii) yield that
\[ \apsi_{k+1}(\w_{k+1})+C_{k+1} \geq \apsi_k(\w_k) + C_k + \lambda_{k+1}f(\w_{k+1}). \]
Using $(\rel_k)$ and the convexity of $f$ conclude the relation $(\rel_{k+1})$; $(\srel_{k+1})$ follows by using $(\srel_k)$ and the inequality above. Hence, the assertion (ii) is proved.

For the non-smooth problems, on the other hand, we can continue by taking $\sigma:=\sigma_f$ as follows.
\begin{eqnarray*}
\apsi_{k+1}(\w_{k+1}) &\geq& \apsi_k(\w_k) + \lambda_{k+1}f(x_{k+1}) + \innprod{\lambda_{k+1}g_{k+1}}{\w_{k+1}-x_{k+1}} + \frac{\sigma_d}{2}(\beta_k + S_{k+1}\sigma_f)\norm{\w_{k+1}- x_{k+1}}^2\\
&\stackrel{(\ref{Young's-ineq})}{\geq}& \apsi_k(\w_k) + \lambda_{k+1}f(x_{k+1}) - \frac{1}{2}\frac{\lambda_{k+1}^2}{\sigma_d(\beta_k + S_{k+1}\sigma_f)}\norm{g_{k+1}}_*^2.
\end{eqnarray*}
Hence, the definition (\ref{nonsm-rel-k-clas}) of $C_{k+1}$ yields that
\[ \apsi_{k+1}(\w_{k+1}) + C_{k+1} \geq \apsi_k(\w_k) + C_k + \lambda_{k+1}f(x_{k+1}). \]
Now the assertion (i) follows by the same way as (ii).
\end{proof}


\subsection{Validity of $(R_k)$ for the modified method when $k > 0$}

The following lemma completes our induction 
for the modified method. In a similar manner as Lemma \ref{rel-k-clas}, the items (i) and (ii) below correspond to the $k$-th iteration of the modified method in Method \ref{gen-alg-nonsm} and \ref{gen-alg-struc}, respectively.

\begin{lem}\label{rel-k-modif}
%
(i)
Consider a non-smooth problem in the class $\mathcal{NSP}(g,\sigma_f)$ and let $\{(\aphi_k(x),\apsi_k(x))\}_{k\geq -1}$ be a coupled sequence of auxiliary functions satisfying Property \ref{framework-double} associated with weight parameters $\{\lambda_k\}_{k \geq 0}$, scaling parameters $\{\beta_k\}_{k \geq -1}$, and test points $\{x_k\}_{k \geq 0}$.
Suppose for $k \geq 0$ that the relation $(\rel_k)$ is satisfied for some $\hat{x}_k\in Q$, $C_k \geq 0$.
If the relation
$x_{k+1} = \frac{S_k\hat{x}_k + \lambda_{k+1}\z_{k}}{S_{k+1}}$
holds, then the relation $(\rel_{k+1})$ is satisfied with $\hat{x}_{k+1}:=x_{k+1}$ and
\begin{equation}\label{nonsm-rel-k-modif}
C_{k+1} := C_k + \frac{1}{2\sigma_d} \frac{\lambda_{k+1}^2 S_{k+1}}{\lambda_{k+1}^2\sigma_f + S_{k+1}(\beta_k + S_k \sigma_f)}\norm{g_{k+1}}_*^2.
\end{equation}
%

\noindent
(ii)
Consider a structured problem in the class $\mathcal{SP}(m_f,\sigma_f,\bar\sigma_f,L,\delta)$ and let $\{(\aphi_k(x),\apsi_k(x))\}_{k\geq -1}$ be a coupled sequence of auxiliary functions satisfying Property \ref{framework-double} associated with weight parameters $\{\lambda_k\}_{k \geq 0}$, scaling parameters $\{\beta_k\}_{k \geq -1}$, and test points $\{x_k\}_{k \geq 0}$.
Suppose for $k \geq 0$ that the relation $(\rel_k)$ is satisfied for some $\hat{x}_k\in Q$, $C_k \geq 0$.
If the relations
$ x_{k+1} = \frac{S_k\hat{x}_k + \lambda_{k+1}\z_{k}}{S_{k+1}}$ and $\hat{x}_{k+1}=\frac{S_k\hat{x}_k + \lambda_{k+1}\w_{k+1}}{S_{k+1}} $
hold, then
the relation $(\rel_{k+1})$ is satisfied with
\begin{equation}\label{struc-rel-k-modif}
C_{k+1}:=C_k + S_{k+1}\left(\frac{L(x_{k+1})}{2}-\frac{\sigma_d}{2} \left(\bar\sigma_f + \frac{S_{k+1}(\beta_k + S_k \sigma_f)}{\lambda_{k+1}^2} \right)\right)\norm{\hat{x}_{k+1}-x_{k+1}}^2 + S_{k+1}\delta(x_{k+1},\hat{x}_{k+1}).\end{equation}
\end{lem}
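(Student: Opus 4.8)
The plan is to complete the induction begun in Lemmas~\ref{rel-0} and~\ref{rel-k-clas}: assuming $(\rel_k)$ holds for the modified update with some $\hat{x}_k\in Q$ and $C_k\ge 0$, I would deduce $(\rel_{k+1})$ with the $\hat{x}_{k+1}$ and $C_{k+1}$ stated in the lemma. Both items start from the one-step inequality already isolated in the proof of Lemma~\ref{rel-0}: combining (B2) of Property~\ref{framework-double} evaluated at $x=\w_{k+1}$ (in the single-sequence setting of Method~\ref{gen-alg-nonsm} one instead uses (A2) together with the identity $\beta_{k+1}d(x)-\beta_kl_d(\z_k;x)=(\beta_{k+1}-\beta_k)d(x)+\beta_k\xi(\z_k,x)$ and then minimizes over $Q$), with (B0) and the monotonicity of $\{\beta_k\}$, and then invoking $(\rel_k)$ to replace $\apsi_k(\w_k)$ by $S_kf(\hat{x}_k)-C_k$, yields
\[
S_kf(\hat{x}_k)+\lambda_{k+1}m_f(x_{k+1};\w_{k+1})\;\le\;\apsi_{k+1}(\w_{k+1})+C_k-(\beta_k+S_k\sigma_f)\xi(\z_k,\w_{k+1}).
\]
The other ingredient is the convex-combination structure of the modified update with ratio $\tau_k:=\lambda_{k+1}/S_{k+1}$: from $x_{k+1}=(1-\tau_k)\hat{x}_k+\tau_k\z_k$ one reads off $S_k(\hat{x}_k-x_{k+1})=\lambda_{k+1}(x_{k+1}-\z_k)$, and, in item (ii), $\hat{x}_{k+1}-x_{k+1}=\tau_k(\w_{k+1}-\z_k)$; these, together with $\xi(y,x)\ge\frac{\sigma_d}{2}\norm{x-y}^2$, are what convert Bregman distances into the squared norms in $C_{k+1}$.

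For the structured case (ii) I would bound $f(\hat{x}_{k+1})$ from above using (\ref{struc-ineq}) at $(y,x)=(x_{k+1},\hat{x}_{k+1})$ together with the strong-convexity inequality (\ref{st-conv-Breg}) for $m_f(x_{k+1};\cdot)$ applied to the convex combination $\hat{x}_{k+1}=(1-\tau_k)\hat{x}_k+\tau_k\w_{k+1}$ — legitimate since $\bar\sigma_f\le\sigma_f\in\sigma(m_f(x_{k+1};\cdot))$ — and the trivial bound $m_f(x_{k+1};\hat{x}_k)\le f(\hat{x}_k)$. Discarding the nonnegative Bregman terms centered at $\hat{x}_{k+1}$ contributed by the two endpoints, but \emph{retaining} $\bar\sigma_f\xi(x_{k+1},\hat{x}_{k+1})\ge\frac{\sigma_d\bar\sigma_f}{2}\norm{\hat{x}_{k+1}-x_{k+1}}^2$, gives
\[
S_{k+1}f(\hat{x}_{k+1})\;\le\;S_kf(\hat{x}_k)+\lambda_{k+1}m_f(x_{k+1};\w_{k+1})+S_{k+1}\Bigl(\tfrac{L(x_{k+1})}{2}-\tfrac{\sigma_d\bar\sigma_f}{2}\Bigr)\norm{\hat{x}_{k+1}-x_{k+1}}^2+S_{k+1}\delta(x_{k+1},\hat{x}_{k+1}).
\]
Substituting the one-step bound and using $\xi(\z_k,\w_{k+1})\ge\frac{\sigma_d}{2}\norm{\w_{k+1}-\z_k}^2=\frac{\sigma_dS_{k+1}^2}{2\lambda_{k+1}^2}\norm{\hat{x}_{k+1}-x_{k+1}}^2$, the coefficient of $\norm{\hat{x}_{k+1}-x_{k+1}}^2$ collapses to $S_{k+1}\bigl(\tfrac{L(x_{k+1})}{2}-\tfrac{\sigma_d}{2}(\bar\sigma_f+\tfrac{S_{k+1}(\beta_k+S_k\sigma_f)}{\lambda_{k+1}^2})\bigr)$, which is exactly (\ref{struc-rel-k-modif}); hence $S_{k+1}f(\hat{x}_{k+1})\le\apsi_{k+1}(\w_{k+1})+C_{k+1}$, i.e. $(\rel_{k+1})$.

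For the non-smooth case (i), where $\hat{x}_{k+1}=x_{k+1}$ and the device above degenerates, I would instead substitute $m_f(x_{k+1};x)=f(x_{k+1})+\innprod{g_{k+1}}{x-x_{k+1}}+\sigma_f\xi(x_{k+1},x)$ into the auxiliary-function lower bound, bound the two Bregman distances below by $\frac{\sigma_d}{2}\norm{\cdot}^2$, and carry out the resulting quadratic minimization over $x\in Q$ (completing the square in $\lambda_{k+1}\sigma_f\norm{x-x_{k+1}}^2+(\beta_k+S_k\sigma_f)\norm{x-\z_k}^2$ around its weighted barycenter and applying Young's inequality (\ref{Young's-ineq}) in scaled form). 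The residual cross term is reconciled against the $S_kf(\hat{x}_k)$ furnished by $(\rel_k)$ via the subgradient inequality $f(\hat{x}_k)-f(x_{k+1})\ge\innprod{g_{k+1}}{\hat{x}_k-x_{k+1}}$ and the identity $S_k(\hat{x}_k-x_{k+1})=\lambda_{k+1}(x_{k+1}-\z_k)$, after which the constant should collapse to $\tfrac{\lambda_{k+1}^2S_{k+1}}{\lambda_{k+1}^2\sigma_f+S_{k+1}(\beta_k+S_k\sigma_f)}$ of (\ref{nonsm-rel-k-modif}).

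I expect the main obstacle to be this last constant-chasing in item (i): because the two Bregman terms are now centered at the \emph{distinct} points $x_{k+1}$ and $\z_k$ — in contrast to the classical method of Lemma~\ref{rel-k-clas}, where $x_{k+1}=\z_k$ and they merge into a single term — one must combine them, together with the geometric coupling among $\z_k$, $x_{k+1}$ and $\hat{x}_k$, in precisely the right way to reach the stated denominator rather than a cruder one; in item (ii) the analogous delicate point is arranging the convex-combination step so that the extra $-\tfrac{\sigma_d\bar\sigma_f}{2}\norm{\hat{x}_{k+1}-x_{k+1}}^2$ term emerges. All the remaining steps are the same routine bookkeeping as in Lemmas~\ref{rel-0} and~\ref{rel-k-clas}.
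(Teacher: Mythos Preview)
Your treatment of item~(ii) is essentially the paper's argument. The paper proceeds in the opposite order (it bounds $\apsi_{k+1}(\w_{k+1})+C_k$ from below first and only then invokes (\ref{struc-ineq})), but the ingredients --- the one-step inequality (\ref{min-aux-rel}), $(\rel_k)$, $f(\hat{x}_k)\ge m_f(x_{k+1};\hat{x}_k)$, convexity of $m_f(x_{k+1};\cdot)$, and the rescaling $\|\w_{k+1}-\z_k\|=\frac{S_{k+1}}{\lambda_{k+1}}\|\hat{x}_{k+1}-x_{k+1}\|$ --- are the same and assembled to the same effect.

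For item~(i), however, the route you describe does \emph{not} reach the constant in (\ref{nonsm-rel-k-modif}). If you substitute $m_f(x_{k+1};\w_{k+1})$, lower-bound the two Bregman terms by $\frac{\sigma_d}{2}\|\cdot\|^2$, complete the square around the barycenter, apply Young's inequality, and then reconcile the leftover cross term with the \emph{plain} subgradient inequality $f(\hat{x}_k)-f(x_{k+1})\ge\innprod{g_{k+1}}{\hat{x}_k-x_{k+1}}$, the bookkeeping collapses to
\[
\apsi_{k+1}(\w_{k+1})+C_k\ \ge\ S_{k+1}f(x_{k+1})-\frac{\lambda_{k+1}^2}{2\sigma_d(\beta_k+S_k\sigma_f)}\|g_{k+1}\|_*^2,
\]
which is strictly weaker than (\ref{nonsm-rel-k-modif}). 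The loss occurs because the subgradient inequality discards the strong-convexity term $\sigma_f\xi(x_{k+1},\hat{x}_k)$ contained in $m_f(x_{k+1};\hat{x}_k)\le f(\hat{x}_k)$; that missing $S_k\sigma_f$ contribution is exactly what lifts the $\sigma_f$--coefficient from $\lambda_{k+1}\sigma_f$ to $S_{k+1}\sigma_f$ in the target denominator.

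The clean fix --- and this is what the paper does --- is to handle~(i) by the \emph{same} device you already use in~(ii): introduce the auxiliary point $x'_{k+1}:=\frac{S_k\hat{x}_k+\lambda_{k+1}\w_{k+1}}{S_{k+1}}$, use $f(\hat{x}_k)\ge m_f(x_{k+1};\hat{x}_k)$ and convexity of $m_f(x_{k+1};\cdot)$ to merge the two evaluations into $S_{k+1}m_f(x_{k+1};x'_{k+1})$, and observe that $x'_{k+1}-x_{k+1}=\frac{\lambda_{k+1}}{S_{k+1}}(\w_{k+1}-\z_k)$. Now both quadratic pieces are centered at the \emph{same} point $x'_{k+1}$: one $\frac{\sigma_d\sigma_f}{2}\|x'_{k+1}-x_{k+1}\|^2$ coming from $\sigma_f\xi(x_{k+1},x'_{k+1})$ inside $m_f$, and one $\frac{\sigma_d S_{k+1}^2}{2\lambda_{k+1}^2}(\beta_k+S_k\sigma_f)\|x'_{k+1}-x_{k+1}\|^2$ coming from $\xi(\z_k,\w_{k+1})$. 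A single application of Young's inequality then yields (\ref{nonsm-rel-k-modif}) exactly, with no residual cross term to chase.
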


\begin{proof}
Denote $x'_{k+1} := \frac{S_k\hat{x}_k + \lambda_{k+1}\w_{k+1}}{S_{k+1}}$.
If $x_{k+1} = \frac{S_k\hat{x}_k + \lambda_{k+1}\z_{k}}{S_{k+1}}$ holds, then $x'_{k+1}-x_{k+1}=\frac{\lambda_{k+1}}{S_{k+1}}(\w_{k+1}-\z_k)$.
Using (\ref{min-aux-rel}) and the relation $(\rel_k)$, we have
\begin{eqnarray}
\apsi_{k+1}(\w_{k+1}) + C_k &\geq& \apsi_k(\w_k) + C_k + \lambda_{k+1}m_f(x_{k+1};\w_{k+1}) + (\beta_k + S_k\sigma_f)\xi(\z_k,\w_{k+1})\nonumber\\
&\geq& S_kf(\hat{x}_k) + \lambda_{k+1}m_f(x_{k+1};\w_{k+1}) + (\beta_k + S_k\sigma_f)\xi(\z_k,\w_{k+1}) \nonumber\\
&\geq& S_km_f(x_{k+1};\hat{x}_k) + \lambda_{k+1}m_f(x_{k+1};\w_{k+1}) + (\beta_k + S_k\sigma_f)\xi(\z_k,\w_{k+1}) \nonumber\\
&\geq& S_{k+1}m_f(x_{k+1};x'_{k+1}) + (\beta_k + S_k\sigma_f)\xi(\z_k,\w_{k+1}), \label{rel-k-modif-1}
\end{eqnarray}
where we used $f(x)\geq m_f(y;x),\forall x,y\in Q$ and the convexity of $m_f(x_{k+1};\cdot)$ for the last two inequalities.
Since $\xi(\z_k,\w_{k+1}) \geq \frac{\sigma_d}{2}\norm{\w_{k+1}-\z_k}^2=\frac{\sigma_d}{2}\frac{S_{k+1}^2}{\lambda_{k+1}^2}\norm{x'_{k+1}-x_{k+1}}^2$ and
\begin{eqnarray*}
m_f(x_{k+1};x'_{k+1})
&=&
m_f(x_{k+1};x'_{k+1})-\sigma\xi(x_{k+1},x'_{k+1})+\sigma\xi(x_{k+1},x'_{k+1})\\
&\geq&
m_f(x_{k+1};x'_{k+1})-\sigma\xi(x_{k+1},x'_{k+1})+\frac{\sigma\sigma_d}{2}\norm{x_{k+1}-x'_{k+1}}^2
\end{eqnarray*}
hold for any $\sigma \geq 0$, the inequality (\ref{rel-k-modif-1}) implies that
\begin{eqnarray}
\apsi_{k+1}(\w_{k+1})+C_k &\geq&
S_{k+1}[m_f(x_{k+1};x'_{k+1})-\sigma\xi(x_{k+1},x'_{k+1})]\nonumber\\
&&\qquad + \frac{\sigma_d}{2} S_{k+1}\left(\sigma + \frac{S_{k+1}(\beta_k + S_k \sigma_f)}{\lambda_{k+1}^2} \right)\norm{x'_{k+1}-x_{k+1}}^2. \label{rel-k-modif-2}
\end{eqnarray}
Let us prove (ii) at first. Since $\hat{x}_{k+1}=x'_{k+1}$ by the assumption, adding
\[ S_{k+1}\left(\frac{L(x_{k+1})}{2}-\frac{\sigma_d}{2} \left(\bar\sigma_f + \frac{S_{k+1}(\beta_k + S_k \sigma_f)}{\lambda_{k+1}^2} \right)\right)\norm{\hat{x}_{k+1}-x_{k+1}}^2 + S_{k+1}\delta(x_{k+1},\hat{x}_{k+1})\]
to both sides in (\ref{rel-k-modif-2}) with $\sigma:=\bar\sigma_f$ and using the inequality (\ref{struc-ineq}) implies the relation $(\rel_{k+1})$ with the setting (\ref{struc-rel-k-modif}).

To prove (i), on the other hand, letting $\sigma:=\sigma_f$ and using $m_f(x_{k+1};x'_{k+1})-\sigma\xi(x_{k+1},x'_{k+1})=f(x_{k+1})+\innprod{g_{k+1}}{x'_{k+1}-x_{k+1}}$ leads (\ref{rel-k-modif-2}) to
\begin{eqnarray*}
\apsi_{k+1}(\w_{k+1}) + C_k&\geq& S_{k+1}f(x_{k+1}) + \innprod{S_{k+1}g_{k+1}}{x'_{k+1}-x_{k+1}}\\
&&\qquad + \frac{\sigma_d}{2} S_{k+1}\left(\sigma_f + \frac{S_{k+1}(\beta_k + S_k \sigma_f)}{\lambda_{k+1}^2} \right)\norm{x'_{k+1}-x_{k+1}}^2\\
&\stackrel{(\ref{Young's-ineq})}{\geq}& S_{k+1}f(x_{k+1}) - \frac{1}{2}\frac{S_{k+1}^2}{\sigma_d S_{k+1}\left(\sigma_f + \frac{S_{k+1}(\beta_k + S_k \sigma_f)}{\lambda_{k+1}^2} \right) }\norm{g_{k+1}}_*^2\\
&=& S_{k+1}f(x_{k+1}) - \frac{1}{2\sigma_d} \frac{\lambda_{k+1}^2 S_{k+1}}{\lambda_{k+1}^2\sigma_f + S_{k+1}(\beta_k + S_k \sigma_f)}\norm{g_{k+1}}_*^2.
\end{eqnarray*}
This means that the relation $(\rel_{k+1})$ is obtained with (\ref{nonsm-rel-k-modif}).
\end{proof}


\subsection{Proof of Theorems~\ref{gen-conv-nonsm} and \ref{gen-conv-struc}}

Let us show Theorem \ref{gen-conv-nonsm}; the proof of Theorem \ref{gen-conv-struc} is analogue replacing $(\nrel_k)$ with $(\srel_k)$ and the part (i) with (ii) in Lemmas \ref{rel-0}, \ref{rel-k-clas}, \ref{rel-k-modif}.

By the description of the Method \ref{gen-alg-nonsm}, we can apply part (i) of each Lemmas \ref{rel-0},\ref{rel-k-clas},\ref{rel-k-modif} to show that the relation $(\rel_k)$ holds for every $k \geq 0$ with $C_k$ defined by (\ref{C_k-nonsm}); for the classical method, the relation $(\nrel_k)$ can also be verified. The assertion follows from Lemma \ref{R_k-bound} and its analogue for the relation $(\nrel_k)$ (see Remark \ref{remark-R_k} (1)). \qed

We remark that the above lemmas justify our choices for the update
formulas of $x_k$ and $\hat{x}_k$ in Methods~\ref{gen-alg-nonsm} and~\ref{gen-alg-struc}. In fact, what is behind the proofs is the satisfaction of the relation $(\rel_k)$ (or its variants). Therefore, the relation $(\rel_k)$ is an implicit factor in our unifying framework.


\section{Optimal/nearly optimal convergence rates of (sub)gradient-based methods}
\label{sec-convergence}

In this section, we finally give the actual convergence rates for Methods \ref{gen-alg-nonsm} and \ref{gen-alg-struc} based on the general estimates presented in Section \ref{sec-general-convergence}, and compare these results with the existing ones. Our choices for weight $\{\lambda_k\}_{k \geq 0}$ and scaling parameters $\{\beta_k\}_{k \geq -1}$ resemble and extend the existing ones to compute approximate solutions $\{\hat{x}_k\}_{k \geq 0}$.

As a matter of comparison, we summarize the optimal convergence rates for each problem classes given in Sections \ref{ssec-existing-nonsm} and \ref{sssec-examples-struc} at Table \ref{opt-conv-rate}. 
This table shows the optimal convergence rates of $f(\hat{x}_k)-f(x^*)$ for PGMs applied to non-smooth, smooth, and weakly smooth problems (remark that $\sigma_d\sigma_f$ becomes a convexity parameter of $f$ with respect to the norm $\norm{\cdot}$; see Section \ref{ssec-problem-struc}).


\begin{table}[htbp]
\caption{Optimal convergence rates of PGMs. Here $\sigma_f \in \sigma(f)$, $k$ is an iteration counter, and $c_1(\cdot)$ and $c_2(\cdot)$ are fixed continuous functions. Refer to examples (i) and (iv) in Section \ref{sssec-examples-struc} for the descriptions of smooth and weakly smooth problems, respectively.}
\scalebox{0.9}{
\begin{tabular}{|c|c|c|}
\hline
problem class / type of convexity
& non-strongly convex ($\sigma_f=0$) & strongly convex ($\sigma_f>0$)\\
\hline
	
	non-smooth problem with (\ref{subgrad-bounded}) for some $M>0$ &
	$O\left(M\sqrt{\frac{d(x^*)}{\sigma_d k}}\right)$ &
	$O\left(\frac{M^2}{\sigma_d\sigma_f k}\right)$\\
	
	smooth problem $C_L^{1,1}(Q)$ &
	$O\left(\frac{Ld(x^*)}{\sigma_d k^2}\right)$ &
	$O\left(\exp\left(-\sqrt{\frac{L}{\sigma_d\sigma_f}}k\right)\right)$ \\
	
	weakly smooth problem $C_M^{1,\rho-1}(Q)$, $\rho\in[1,2)$ &
	$c_1(\rho)M\left(\frac{d(x^*)}{\sigma_d}\right)^{\frac{\rho}{2}}k^{-\frac{3\rho-2}{2}}$ &
	$c_2(\rho)\left(\frac{M^2}{(\sigma_d\sigma_f)^\rho}k^{-(3\rho-2)}\right)^{\frac{1}{2-\rho}}$\\
\hline
\end{tabular}
} 
\label{opt-conv-rate}
\end{table}

For CGMs applied to weakly smooth problems (the class $C_M^{1,\rho-1}(Q)$, $\rho \in (1,2]$), the convergence rate
\begin{equation}\label{CGM-known-rate}
f(\hat{x}_k)-f(x^*) \leq O\left(\frac{M{\rm Diam(Q)}^{\rho}}{k^{\rho-1}}\right)
\end{equation}
can be achievable using the classical one (\ref{classical-CGM}) or some of its variants \cite{Nes15}.
This rate is known to be optimal when $\rho=2$ in the sense of linear optimization oracle \cite{Lan14a} and nearly optimal otherwise \cite{GN15}.

We show optimal convergence results of PGMs for the non-smooth problems in the next subsection, for the structured problems with inexact oracle in Sections \ref{ssec-eff-clas-struc}, \ref{ssec-eff-modif-struc}, and for the weakly smooth problems in the last subsection, all for the strongly convex cases. Optimal and nearly optimal convergences of CGMs are developed in Sections \ref{ssec-eff-modif-struc} and \ref{sssec-CGM-weak}.

All of convergence rates matches the known optimal rates of convergence (excepting the classical method for the structured problems).

A noteworthy new result is the attainment of the optimal convergence rate for weakly smooth problems in the strongly convex case with less prior information of the objective function than the existing ones (Section \ref{sssec-convergence-st}). In addition, for smooth problems, the obtained convergence rates slightly improve the existing ones (Sections \ref{ssec-eff-clas-struc} and \ref{ssec-eff-modif-struc}).

Another consequence is that the existing methods included in our unifying framework can be naturally extended for wider classes of problems. In particular, without using a multistage procedure, the DAM for the non-smooth problems can be extended to the strongly convex case (Section \ref{ssec-eff-nonsm}), and Nesterov's and Tseng's PGMs can be extended to the weakly smooth and/or the strongly convex cases (Sections \ref{ssec-eff-modif-struc}, \ref{ssec-eff-modif-weak}).


\subsection{Optimal convergence rate for non-smooth problems}
\label{ssec-eff-nonsm}

Let us analyze the convergence rate of PGMs yielded from Method \ref{gen-alg-nonsm}.
Recall that Method \ref{gen-alg-nonsm} generates a sequence $\{\hat{x}_k\}$ which satisfies the relation $(\rel_k)$ with $C_k$ defined by (\ref{C_k-nonsm}).

When $\sigma_f = 0$, the definitions of $C_k$ for the classical and the modified methods become the same: $C_k=\frac{1}{2\sigma_d}\sum_{i=0}^k\frac{\lambda_i^2}{\beta_{i-1}}\norm{g_i}_*^2$; this case is analyzed in \cite[Corollary 11]{IF14} which ensures the optimal convergence rate $O(M\sqrt{d(x^*)/(\sigma_dk)}$ with an advantage that we do not need values $d(x^*)$ and $M$ in the definition of the parameters $\{\lambda_k\}$ and $\{\beta_k\}$ to achieve $O(1/\sqrt{k})$-convergence.

When $\sigma_f >0$, note that
\[ \frac{\lambda_i^2S_{i}}{\lambda_{i}^2\sigma_f + S_i(\beta_{i-1} + S_{i-1}\sigma_f)}  = \frac{\lambda_i^2}{\beta_{i-1}+S_{i-1}\sigma_f+\frac{\lambda_i^2}{S_i}\sigma_f} \geq \frac{\lambda_i^2}{\beta_{i-1} + S_i\sigma_f} \]
holds since $\lambda_i/S_i\leq 1$.
In this case, theoretically, the classical method ensures not a worse convergence rate than the modified counterpart.

We give an optimal convergence result with a simple choice for the parameters $\lambda_k=(k+1)/2$ and $\beta_k\equiv 0$ below. Note that every subproblem $\min_{x \in Q}\sphi_k(x)$ has a unique solution even if $\beta_k \equiv 0$ because $\sigma(\sphi_k) \ni \beta_k + S_k\sigma_f = S_k \sigma_f > 0$ (see the proof of Lemma \ref{prop-subprob}).

\begin{thm} 
Consider a non-smooth problem in the class $\mathcal{NSP}(g,\sigma_f)$.
Let $\{(\sz_{k-1},x_k,g_k,\hat{x}_k)\}_{k \geq 0}$ be generated by Method \ref{gen-alg-nonsm} associated with $\lambda_k = (k+1)/2$ and $\beta_k \equiv 0$.
Assume that $\sigma_f>0$ and $\sup_{k \geq 0}\norm{g_k}_* \leq M_f < +\infty$. Then, we have
\[ \max\{f(\hat{x}_k)-f(x^*),~\min_{0\leq i\leq k}f(x_i)-f(x^*)\}+\sigma_f\xi(x_{k+1},x^*) \leq \frac{2M_f^2}{\sigma_d\sigma_f (k+4)}, \quad \forall k \geq 0 \]
with the classical method, and
\[ f(\hat{x}_k)-f(x^*)+\sigma_f\xi(\sz_k,x^*) \leq \frac{2M_f^2}{\sigma_d\sigma_f}\frac{k+\log k +3/2}{(k+1)(k+2)} = O\left(\frac{M_f^2}{\sigma_d\sigma_f k}\right),\quad \forall k \geq 1 \]
with the modified method.
\end{thm}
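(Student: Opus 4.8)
The plan is to substitute $\lambda_k=(k+1)/2$ and $\beta_k\equiv 0$ into the general estimate of Theorem~\ref{gen-conv-nonsm} and then bound the resulting scalar quantities using $\norm{g_i}_*\le M_f$. First I would record that $S_k=\sum_{i=0}^k\frac{i+1}{2}=\frac{(k+1)(k+2)}{4}$ and that, since $\beta_k\equiv 0$ forces $\beta_kl_d(\sz_k;x^*)=0$, the bound (\ref{gen-est-nonsm}) becomes $f(\hat{x}_k)-f(x^*)+\sigma_f\xi(\sz_k,x^*)\le C_k/S_k$; by the last sentence of Theorem~\ref{gen-conv-nonsm} the same upper bound also holds when the left-hand side is replaced by $\frac{1}{S_k}\sum_{i=0}^k\lambda_if(x_i)-f(x^*)+\sigma_f\xi(\sz_k,x^*)$ or by $\min_{0\le i\le k}f(x_i)-f(x^*)+\sigma_f\xi(\sz_k,x^*)$ for the classical method. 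So it only remains to estimate $C_k/S_k$ for each of the two methods.

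For the classical method, (\ref{C_k-nonsm}) with $\beta_{i-1}=0$ gives $C_k=\frac{1}{2\sigma_d\sigma_f}\sum_{i=0}^k\frac{\lambda_i^2}{S_i}\norm{g_i}_*^2\le\frac{M_f^2}{2\sigma_d\sigma_f}\sum_{i=0}^k\frac{\lambda_i^2}{S_i}$, and a direct computation yields $\lambda_i^2/S_i=\frac{i+1}{i+2}$, hence $\sum_{i=0}^k\frac{\lambda_i^2}{S_i}=(k+1)-\sum_{j=2}^{k+2}\frac1j$. Dividing by $S_k$, the claimed bound $2M_f^2/(\sigma_d\sigma_f(k+4))$ becomes equivalent to the scalar inequality $\sum_{j=2}^{k+2}\frac1j\ge\frac{2(k+1)}{k+4}$, which I would prove by induction on $k$: the base case $k=0$ holds with equality, and the induction step, after clearing denominators, reduces to $(k+1)(k+2)\ge 0$. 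Since $\sz_k=x_{k+1}$ in the classical method and since $\max\{a,b\}\le c$ amounts to $a\le c$ and $b\le c$, this yields the first displayed estimate of the theorem.

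For the modified method, (\ref{C_k-nonsm}) with $\beta_{i-1}=0$ gives $C_k\le\frac{M_f^2}{2\sigma_d\sigma_f}\sum_{i=0}^k\frac{\lambda_i^2S_i}{\lambda_i^2+S_iS_{i-1}}$ (with the convention $S_{-1}=0$ for the term $i=0$), and substituting $\lambda_i=(i+1)/2$, $S_i=(i+1)(i+2)/4$, $S_{i-1}=i(i+1)/4$ simplifies the $i$-th summand to $\frac{(i+1)(i+2)}{(i+1)^2+3}=1+\frac{i-2}{(i+1)^2+3}$. For $i\ge 3$ I would use $(i+1)^2+3>i(i+1)$ to obtain $\frac{i-2}{(i+1)^2+3}<\frac{i-2}{i(i+1)}=\frac{3}{i+1}-\frac2i$, so that the sum over $i\ge 3$ telescopes and leaves a harmonic remainder; collecting the terms $i=0,1,2$ explicitly and using $H_k:=\sum_{j=1}^k\frac1j\le 1+\log k$ for $k\ge 1$, one reaches $\sum_{i=0}^k\frac{(i+1)(i+2)}{(i+1)^2+3}<k+\log k+3/2$. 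Dividing by $S_k=(k+1)(k+2)/4$ produces the stated bound, and the order $O(M_f^2/(\sigma_d\sigma_f k))$ follows because $\frac{k+\log k+3/2}{(k+1)(k+2)}=O(1/k)$.

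The algebraic simplifications of $\lambda_i^2/S_i$ and $\frac{\lambda_i^2S_i}{\lambda_i^2+S_iS_{i-1}}$ are routine. I expect the main obstacle to be the constant bookkeeping needed to match the exact constants in the statement: proving $\sum_{j=2}^{k+2}\frac1j\ge\frac{2(k+1)}{k+4}$ so that the denominator is $k+4$ (not merely $k+2$) in the classical case, and keeping the additive term at exactly $3/2$ uniformly over $k\ge 1$ in the harmonic estimate for the modified case — the small cases ($k$ small) may have to be checked separately in the latter.
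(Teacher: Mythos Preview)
Your proposal is correct and follows the same route as the paper: specialize Theorem~\ref{gen-conv-nonsm} with $\beta_k\equiv 0$, reduce to $C_k/S_k$, and bound the two scalar sums. For the classical method your inductive proof of $\sum_{j=2}^{k+2}\tfrac1j\ge\tfrac{2(k+1)}{k+4}$ is exactly the content of the inequality (\ref{FG-prop}) the paper cites from~\cite{FG14}; for the modified method your partial-fraction/telescoping argument works, but the paper gets there more quickly by noting $i(i+2)+4>i(i+2)$ for $i\ge 1$, so that each summand is at most $1+\tfrac1i$ and the total is $\tfrac12+k+H_k\le k+\log k+\tfrac32$ --- this avoids the separate treatment of $i=0,1,2$ and the constant bookkeeping you flag at the end.
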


\begin{proof}
Since $\beta_k\equiv 0$ and $S_k=\frac{(k+1)(k+2)}{4}$, Theorem \ref{gen-conv-nonsm} implies the estimate
\begin{equation}\label{est-nonsm-st-1}
f(\hat{x}_k)-f(x^*)+\sigma_f\xi(\sz_k,x^*) \leq \frac{C_k}{S_k} = \frac{4C_k}{(k+1)(k+2)}
\end{equation}
with $C_k$ defined by (\ref{C_k-nonsm}).
The classical method also admits the same estimate replacing $f(\hat{x}_k)-f(x^*)$ by $\min_{0\leq i \leq k}f(x_i)-f(x^*)$ and we have
\[ C_k = \frac{1}{2\sigma_d}\sum_{i=0}^k\frac{\lambda_i^2}{\beta_{i-1}+S_i\sigma_f}\norm{g_i}_*^2 \leq \frac{M_f^2}{2\sigma_d\sigma_f}\sum_{i=0}^k\frac{\lambda_i^2}{S_i}.\]
Using the inequality
\begin{equation}\label{FG-prop}
\sum_{i=0}^k\frac{\lambda_i^2}{S_i} = \sum_{i=0}^k\frac{i+1}{i+2} \leq \frac{(k+1)(k+2)}{k+4}
\end{equation}
(see \cite[Proposition 7.3]{FG14}), we obtain the first assertion for the classical method.

In the modified method, on the other hand, we have
\[ C_k = \frac{1}{2\sigma_d}\sum_{i=0}^k \frac{\lambda_i^2S_{i}}{\lambda_{i}^2\sigma_f + S_i(\beta_{i-1} + S_{i-1}\sigma_f)}\norm{g_i}_*^2 \leq \frac{M_f^2}{2\sigma_d\sigma_f}\sum_{i=0}^k\frac{(i+1)(i+2)}{i(i+2)+4} \]
and
\[ \sum_{i=0}^k\frac{(i+1)(i+2)}{i(i+2)+4}\leq \frac{1}{2}+\sum_{i=1}^k\frac{(i+1)(i+2)}{i(i+2)} = \frac{1}{2}+\sum_{i=1}^k\left(1+\frac{1}{i}\right) \leq \frac{1}{2}+k+(1+\log k) \]
for all $k \geq 1$, which leads (\ref{est-nonsm-st-1}) to the second assertion.
\end{proof}

Note that the choices of parameters $\lambda_k=(k+1)/2$ and $\beta_k\equiv 0$ do not depend on $M_f$ and $\sigma_f$. However, we need $\sigma_f$ when we solve the subproblems. For instance, the classical method with the extended MD model (\ref{eMD-model}) associated with the above parameters becomes
\begin{eqnarray*}
x_{k+1}:=\sz_k &:=& \argmin_{x \in Q}\{\lambda_k[f(x_k)+\innprod{g_k}{x-x_k}+\sigma_f\xi(x_k,x)]+S_{k-1}\sigma_f\xi(x_k,x)\}\\
&=& \argmin_{x \in Q}\{\lambda_k[f(x_k)+\innprod{g_k}{x-x_k}]+S_k\sigma_f\xi(x_k,x)\}\\
&=& \argmin_{x \in Q}\left\{\frac{\lambda_k}{S_k\sigma_f}[f(x_k)+\innprod{g_k}{x-x_k}]+\xi(x_k,x)\right\}\\
&=& \argmin_{x \in Q}\left\{\frac{2}{\sigma_f(k+2)}[f(x_k)+\innprod{g_k}{x-x_k}]+\xi(x_k,x)\right\},\\
\hat{x}_k &:=& \frac{1}{S_k}\sum_{i=0}^k\lambda_ix_i = \frac{2}{(k+1)(k+2)}\sum_{i=0}^k(i+1)x_i,
\end{eqnarray*}
which gives the estimates
\begin{equation}\label{est-nonsm-st-clas}
\begin{array}{rll}
\max\{f(\hat{x}_k)-f(x^*),~\min_{0\leq i\leq k}f(x_i)-f(x^*)\}+\sigma_f\xi(x_{k+1},x^*) &\leq& \ds\frac{2M_f^2}{\sigma_d\sigma_f(k+4)},\\[3truemm]
\min\{\norm{\hat{x}_k-x^*}^2,~\norm{x_{i(k)}-x^*}^2,~\norm{x_{k+1}-x^*}^2\} &\leq& \ds\frac{2M_f^2}{\sigma_d^2\sigma_f^2(k+4)},
\end{array}
\end{equation}
for all $k\geq 0$, where $i(k)\in\Argmin_{0\leq i\leq k}f(x_i)$ (see Lemma \ref{R_k-bound} and Remark \ref{remark-R_k}). Notice that the computation of $\sz_k$ is equivalent to the subproblem (\ref{eMD-subprob-nonst}) (the extended MD model for non-strongly convex case) with $\lambda_k:=\frac{2}{\sigma_f(k+2)}$ and $\beta_k \equiv 1$. This result is closely related to \cite[Theorem 1]{NL14}, \cite[Proposition 3.1]{Bach15}, and \cite[Proposition 2.8]{NB}.

The convergence result (\ref{est-nonsm-st-clas}) is also valid for the DA model (\ref{DA-model}), and then we conclude that a strongly convex version of the DAM achieves the optimal complexity for non-smooth problems (see Section \ref{ssec-existing-nonsm}). This result is new. Note that we do not exploit the multistage procedure and do not require an upper bound of $d(x^*)$ to obtain the optimality as required in \cite{JN14}.


\subsection{Convergence rate of the classical method for structured problems with constants $L$ and $\delta$}
\label{ssec-eff-clas-struc}

We next analyze the convergence rate of PGMs produced by Method \ref{gen-alg-struc} for a particular case of structured problems.
Let us consider a structured problems in $\mathcal{SP}(m_f,\sigma_f,\bar\sigma_f,L,\delta)$ for the particular case $L(\cdot)=L\geq 0$ and $\delta(\cdot,\cdot)=\delta\geq 0$.
In this case, we assume that $L\geq \bar\sigma_f\sigma_d$; notice that, in view of $m_f(y;x)\leq f(x)$ and $\xi(y,x)\geq \frac{\sigma_d}{2}\norm{x-y}^2$ for $x,y \in Q$, the inequality (\ref{struc-ineq}) yields $0\leq (L-\bar\sigma_f\sigma_d)\frac{1}{2}\norm{y-x}^2+\delta$.
We firstly show a convergence result of the classical method of Method \ref{gen-alg-struc} which does not ensure the optimal convergence rate for the class $C_L^{1,1}(Q)$. This rate is as better as the existing PGMs compared in this subsection.

\begin{thm}\label{est-struc-clas-st}
Consider a structured problem in the class $\mathcal{SP}(m_f,\sigma_f,\bar\sigma_f,L,\delta)$. Assume additionally that $L(\cdot)=L\geq 0$, $\delta(\cdot,\cdot)=\delta\geq 0$, and  $L\geq \bar\sigma_f\sigma_d$.
Let $\{(\z_{k-1},\w_{k-1},x_k,\hat{x}_k)\}_{k \geq 0}$ be generated by the classical method of Method \ref{gen-alg-struc} with
\begin{equation}\label{param-struc-clas}
\beta_k \equiv \frac{L-\bar\sigma_f\sigma_d}{\sigma_d},~~\lambda_0 = 1,~~\lambda_{k+1} = \frac{\beta_{k}+S_{k}\sigma_f}{\beta_{k}}.
\end{equation}
Then, for every $k \geq 0$, we have
\begin{equation}\label{bound-struc-clas}
f(\hat{x}_k) - f(x^*) + \sigma_f\xi(\z_k,x^*) \leq \frac{L-\bar\sigma_f\sigma_d}{\sigma_d}l_d(\z_k;x^*)\min\left\{ \left(1-\frac{\sigma_f\sigma_d}{L-\bar\sigma_f\sigma_d+\sigma_f\sigma_d}\right)^k,\frac{1}{k+1} \right\} + \delta.
\end{equation}
Furthermore, the left hand side of (\ref{bound-struc-clas}) can be replaced by $\frac{1}{S_k}\sum_{i=0}^k\lambda_kf(\w_{k})-f(x^*) + \sigma_f\xi(\z_k,x^*)$ or by $\min_{0\leq i\leq k}f(\w_{i})-f(x^*) + \sigma_f\xi(\z_k,x^*)$.
\end{thm}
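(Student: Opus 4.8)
The plan is to substitute the prescribed parameters into the general estimate for the classical method of Method~\ref{gen-alg-struc}, namely Theorem~\ref{gen-conv-struc}, and to exploit the fact that the rule $\lambda_{k+1}=(\beta_k+S_k\sigma_f)/\beta_k$ together with $\beta_k\equiv(L-\bar\sigma_f\sigma_d)/\sigma_d$ is precisely the choice that makes the quadratic terms in $C_k$ cancel, leaving only the contribution proportional to $\delta$.

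\textbf{Step 1: reduce $C_k$ to $\delta S_k$.} Since $L(\cdot)\equiv L$ and $\delta(\cdot,\cdot)\equiv\delta$, the constant $C_k$ for the classical method in Theorem~\ref{gen-conv-struc} can be rewritten as
\[ C_k=\frac12\sum_{i=0}^k\bigl(\lambda_i(L-\bar\sigma_f\sigma_d)-\sigma_d(\beta_{i-1}+S_{i-1}\sigma_f)\bigr)\norm{\w_i-x_i}^2+\delta\sum_{i=0}^k\lambda_i, \]
so it suffices to show that every bracketed coefficient vanishes. Put $\beta:=(L-\bar\sigma_f\sigma_d)/\sigma_d$, so that $\beta_k\equiv\beta$ and $L-\bar\sigma_f\sigma_d=\sigma_d\beta$. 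For $i=0$ one has $\lambda_0=1$, $\beta_{-1}=\beta$, $S_{-1}=0$, whence $\lambda_0(L-\bar\sigma_f\sigma_d)=\sigma_d\beta=\sigma_d(\beta_{-1}+S_{-1}\sigma_f)$. For $i\ge1$ one substitutes $\lambda_i=(\beta_{i-1}+S_{i-1}\sigma_f)/\beta_{i-1}=(\beta+S_{i-1}\sigma_f)/\beta$ and uses $L-\bar\sigma_f\sigma_d=\sigma_d\beta$ once more to get $\lambda_i(L-\bar\sigma_f\sigma_d)=\sigma_d(\beta+S_{i-1}\sigma_f)=\sigma_d(\beta_{i-1}+S_{i-1}\sigma_f)$. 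Thus $C_k=\delta S_k$, and Theorem~\ref{gen-conv-struc} gives
\[ f(\hat{x}_k)-f(x^*)+\sigma_f\xi(\z_k,x^*)\ \le\ \frac{\beta\,l_d(\z_k;x^*)}{S_k}+\delta. \]
(Here one also checks $\beta>0$, so that the chosen parameters are well defined and $\{\beta_k\}$ is an admissible nondecreasing scaling sequence.)

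\textbf{Step 2: estimate $S_k$ and conclude.} From $S_0=\lambda_0=1$ and $S_{k+1}=S_k+\lambda_{k+1}=S_k+(\beta+S_k\sigma_f)/\beta=1+rS_k$ with $r:=(\beta+\sigma_f)/\beta\ge1$, an induction yields the closed form $S_k=\sum_{j=0}^k r^j$. Hence $S_k\ge r^k$ and $S_k\ge k+1$, \ie\ $1/S_k\le\min\{r^{-k},1/(k+1)\}$. Since
\[ r^{-1}=\frac{\beta}{\beta+\sigma_f}=\frac{L-\bar\sigma_f\sigma_d}{L-\bar\sigma_f\sigma_d+\sigma_f\sigma_d}=1-\frac{\sigma_f\sigma_d}{L-\bar\sigma_f\sigma_d+\sigma_f\sigma_d}, \]
this factor is exactly the one appearing in the statement, and substituting $1/S_k\le\min\{\,(1-\tfrac{\sigma_f\sigma_d}{L-\bar\sigma_f\sigma_d+\sigma_f\sigma_d})^k,\ \tfrac1{k+1}\,\}$ into the bound of Step~1 produces the claimed inequality (\ref{bound-struc-clas}). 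When $\sigma_f=0$ (so $r=1$) this collapses to the familiar $O(1/k)$ rate, and when $\sigma_f>0$ it exhibits the geometric factor $r^{-k}$.

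\textbf{Step 3: the ``furthermore'' part and the main difficulty.} The identity $C_k=\delta S_k$ did not use which of the three admissible left-hand sides is chosen, so applying the corresponding variants of Theorem~\ref{gen-conv-struc} (with $\frac1{S_k}\sum_{i=0}^k\lambda_i f(\w_i)$ or $\min_{0\le i\le k}f(\w_i)$ in place of $f(\hat{x}_k)$) yields the same bound. I expect the only genuinely delicate point to be the exact cancellation of the quadratic coefficients in Step~1: one must carefully track the index shifts $\beta_{i-1},S_{i-1}$ and separate out $i=0$ via $S_{-1}=0$. Once that is in place, the remainder is the elementary evaluation of the geometric sum defining $S_k$.
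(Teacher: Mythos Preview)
Your proof is correct and follows essentially the same route as the paper: apply Theorem~\ref{gen-conv-struc}, observe that the parameter choice \eqref{param-struc-clas} makes every quadratic coefficient in $C_k$ vanish so that $C_k=S_k\delta$, and then bound $1/S_k$ from the recurrence $S_{k+1}=1+rS_k$ with $r=(\beta+\sigma_f)/\beta$. Your closed form $S_k=\sum_{j=0}^k r^{j}$ is a slightly more explicit way to obtain the two lower bounds $S_k\ge r^{k}$ and $S_k\ge k+1$ than the paper's direct inductive argument, but the substance is identical.
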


\begin{proof}
The classical method admits the relation $(\rel_k)$ and $(\srel_k)$ with
\[ C_k = \frac{1}{2}\sum_{i=0}^k \lambda_i\left(L - \sigma_d \left(\bar\sigma_f+ \frac{\beta_{i-1}+S_{i-1}\sigma_f}{\lambda_i} \right)\right)\norm{\w_i-x_i}^2 + \sum_{i=0}^k\lambda_i\delta.\]
The definitions of $\lambda_k$  and $\beta_k$ implies that $C_k=\sum_{i=0}^k\lambda_i\delta=S_k\delta$ (since $\frac{\beta_{i-1}+S_{i-1}\sigma_f}{\lambda_i}=\beta_{i-1}=\frac{L-\bar{\sigma}_f\sigma_d}{\sigma_d}$) and $S_k = 1+ \left(1+\frac{\sigma_f}{\beta_{-1}}\right)S_{k-1}$ for all $k \geq 0$.
Therefore, we have $S_k \geq k+1$ and $S_k \geq (1+\frac{\sigma_f}{\beta_{-1}})^kS_0=(1-\frac{\sigma_f}{\beta_{-1}+\sigma_f})^{-k}$, and the result follows from Theorem~\ref{gen-conv-struc}.
\end{proof}

Notice that the right hand side of (\ref{bound-struc-clas}) goes to $\delta$ as $k \to \infty$.

It is interesting to notice that the particular choice of parameters (\ref{param-struc-clas}) does not necessarily require the knowledge of $\sigma_f$ and $\bar{\sigma}_f$ for the implementation of the classical gradient method with the extended MD model (\ref{eMD-model}); for smooth problems (\ie, $f \in C_L^{1,1}(Q)$), for instance, the corresponding subproblem can be rewritten as follows: 
\begin{eqnarray}
\z_k&:=& \argmin_{x \in Q}\left\{\lambda_k\left[f(x_k)+\innprod{\nabla{f}(x_k)}{x-x_k}+\bar\sigma_f\xi(x_k,x)\right]+\beta_k\xi(x_k,x)+S_{k-1}\sigma_f\xi(x_k,x)\right\} \nonumber\\
&=& \argmin_{x \in Q}\left\{f(x_k)+\innprod{\nabla{f}(x_k)}{x-x_k}+\left(\bar{\sigma}_f+\frac{\beta_{k}+S_{k-1}\sigma_f}{\lambda_k}\right)\xi(x_k,x)\right\} \nonumber\\
&\stackrel{(\ref{param-struc-clas})}{=}& \argmin_{x \in Q}\left\{f(x_k)+\innprod{\nabla{f}(x_k)}{x-x_k}+\frac{L}{\sigma_d}\xi(x_k,x)\right\}, \label{subprob-pgm}
\end{eqnarray}
which requires only $L$; in the Euclidean setting ({\it i.e.}, $\frac{1}{\sigma_d}\xi(x_k,x)=\frac{1}{2}\|x_k-x\|_2^2$), furthermore, the Lipschitz condition (\ref{struc-ineq}) ensures that $f(x_{k+1})\leq f(x_k)$ because $x_{k+1}=z_k$ is given by (\ref{subprob-pgm}).
The classical gradient method with the DA model (\ref{DA-model}) and the hybrid model (\ref{hybrid-model}), on the other hand, do not possess this advantage.

Let us see the corresponding PGMs for other particular structures. 
\begin{itemize}
\item Consider the composite problem $\min_{x \in Q}[f(x)\equiv f_0(x)+\varPsi(x)]$ as the example (ii) in Section \ref{sssec-examples-struc} with the structure $\bar\sigma_f=\sigma_{f_0}=0$ (and thus $\sigma_f=\sigma_{\varPsi}$) in the Euclidean setting (then, $\sigma_d=1$). Choosing parameters by (\ref{param-struc-clas}), the classical gradient methods with the extended MD model and the hybrid model yield the Gradient Method ${\cal GM}(x_0,L)$ and the Dual Gradient Method ${\cal DG}(x_0,L)$ in \cite{Nes13}, respectively (in this case, we do not exploit the procedure to estimate the Lipschitz constant $L$).
Then, Theorem \ref{est-struc-clas-st} improves the convergence rates shown in \cite{Nes13} as follows: The linear convergence factor $1-\frac{\sigma_f}{L+\sigma_f}=\frac{L}{L+\sigma_f}$ provided by (\ref{bound-struc-clas}) is less than the one in \cite[Theorem 5]{Nes13} (because $\frac{L}{L+\sigma_f}\leq\min\{\frac{\gamma L}{\sigma_f},1-\frac{\sigma_f}{4\gamma L}\}$ for any $\gamma > 1$) and the same linear convergence is also valid for the method ${\cal DG}(x_0,L)$ which is not presented in the paper (the linear convergence for the dual gradient method was firstly demonstrated in \cite{DGNs}).
\item For the convex problems with inexact oracle model as the example (iii) in Section \ref{sssec-examples-struc} in the Euclidean setting (then, $\sigma_f = \bar\sigma_f$, $\sigma_d = 1$), the classical gradient method with the extended MD model and the hybrid model yield the primal and the dual gradient methods in \cite{DGNs}, respectively (but the definition (\ref{param-struc-clas}) of $\{\lambda_k\}$ is slightly different from (4.1) and (4.2) in \cite{DGNs}). Because of $\sigma_d = 1$ and  $(L-\bar\sigma_f)l_d(\z_k;x^*) \leq Ld(x^*)=\frac{L}{2}\norm{x_0-x^*}_2^2$, the estimate (\ref{bound-struc-clas}) slightly improves Theorems 4 and 5 in \cite{DGNs} (Since $\sigma_f=\bar\sigma_f$, the factor of linear convergence is the same).
\end{itemize}
Note that the classical gradient method of Method \ref{gen-alg-struc} with the DA model (\ref{DA-model}) can reduce the subproblems of the dual gradient method from two \cite{DGNs,Nes13} to one, preserving the same convergence rate.


\subsection{Optimal convergence rate of the modified method for structured problems with constants $L$ and $\delta$}
\label{ssec-eff-modif-struc}

The modified method of Method \ref{gen-alg-struc} for the structured problem in the particular case $L(\cdot)=L\geq 0,~\delta(\cdot,\cdot)=\delta\geq 0$ can be analyzed as follows. Differently from the classical method, it achieves the optimal convergence rate for the class $C_L^{1,1}(Q)$.
The result below further implies efficient rates for the CGMs, too.

\begin{thm}\label{est-struc-modif-st}
Consider a structured problem in the class $\mathcal{SP}(m_f,\sigma_f,\bar\sigma_f,L,\delta)$. Assume additionally that $L(\cdot)=L\geq 0$, $\delta(\cdot,\cdot)=\delta\geq 0$, and  $L\geq \bar\sigma_f\sigma_d$.
\\
(i) Let $\{(\z_{k-1},\w_{k-1},x_k,\hat{x}_k)\}_{k \geq 0}$ be generated by the modified method of Method \ref{gen-alg-struc} with
\begin{equation}\label{param-struc-modif}
\beta_k \equiv \frac{L-\bar\sigma_f\sigma_d}{\sigma_d},~~\lambda_0=1,~~(L-\bar\sigma_f\sigma_d)\lambda_{k+1}^2 = \sigma_d(S_k\sigma_f+\beta_{k-1})(\lambda_{k+1}+S_k)~(k \geq 0)
\end{equation}
(\ie, $\lambda_{k+1}$ is determined as the largest root of the above quadratic equation).
Then, for every $k \geq 0$, we have
\begin{eqnarray*}
f(\hat{x}_k)-f(x^*) + \sigma_f\xi(\z_k,x^*) &\leq& \frac{L-\bar\sigma_f\sigma_d}{\sigma_d}l_d(\z_k;x^*)\min\left\{ \frac{4}{(k+2)^2}, \left(1+\frac{1}{2}\sqrt{\frac{\sigma_f\sigma_d}{L-\bar\sigma_f\sigma_d}}\right)^{-2k} \right\}\\
&&\quad + \min\left\{\frac{1}{3}k +\frac{1}{6}\log(k+2)+1, ~1+\sqrt{\frac{L-\bar\sigma_f\sigma_d}{\sigma_f\sigma_d}}\right\}\delta.
\end{eqnarray*}
(ii) Suppose further that $\sigma_f = 0$ and $Q$ is bounded. Let $\{(\z_{k-1},\w_{k-1},x_k,\hat{x}_k)\}_{k \geq 0}$ be generated by the modified method of Method \ref{gen-alg-struc} with $\beta_k\equiv0$, $\lambda_k:=(k+1)/2$ as a CGM (refer Remark \ref{CGM-gen-bound}). Then, for every $k\geq 0$, we have
\[ f(\hat{x}_k)-f(x^*) \leq \frac{2L\max_{0 \leq i \leq k}\norm{\w_i-\z_{i-1}}^2}{k+4} + \frac{k+3}{3}\delta. \]
\end{thm}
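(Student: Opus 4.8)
The plan is to reduce part~(i) to the general estimate of Theorem~\ref{gen-conv-struc} for the modified method and then to estimate the scalar sequence $\{S_k\}$. With $L(\cdot)\equiv L$, $\delta(\cdot,\cdot)\equiv\delta$ and $\beta_k\equiv\beta:=(L-\bar\sigma_f\sigma_d)/\sigma_d$, Theorem~\ref{gen-conv-struc} gives
\[
f(\hat{x}_k)-f(x^*)+\sigma_f\xi(\z_k,x^*)\leq\frac{\beta\,l_d(\z_k;x^*)+C_k}{S_k},\quad C_k=\frac12\sum_{i=0}^k S_i\Big(L-\sigma_d\big(\bar\sigma_f+\tfrac{S_i(\beta_{i-1}+S_{i-1}\sigma_f)}{\lambda_i^2}\big)\Big)\norm{\hat{x}_i-x_i}^2+\delta\sum_{i=0}^k S_i .
\]
The first step is to observe that (\ref{param-struc-modif}), rewritten via $S_{k+1}=S_k+\lambda_{k+1}$ and $L-\bar\sigma_f\sigma_d=\sigma_d\beta$, is exactly the identity $(\beta+S_{i-1}\sigma_f)S_i=\beta\lambda_i^2$ for every $i\geq0$ (the case $i=0$ following from $S_{-1}=0$, $S_0=\lambda_0=1$, $\beta_{-1}=\beta$). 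Consequently the bracket $L-\sigma_d(\bar\sigma_f+S_i(\beta_{i-1}+S_{i-1}\sigma_f)/\lambda_i^2)$ vanishes for all $i$, so $C_k=\delta\sum_{i=0}^k S_i$ and the estimate collapses to $f(\hat{x}_k)-f(x^*)+\sigma_f\xi(\z_k,x^*)\leq\frac{L-\bar\sigma_f\sigma_d}{\sigma_d}\cdot\frac{l_d(\z_k;x^*)}{S_k}+\delta\cdot\frac{\sum_{i=0}^k S_i}{S_k}$. It then remains only to bound $1/S_k$ and $\sum_{i=0}^k S_i/S_k$.

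For $1/S_k$: the identity $(\beta+S_{i-1}\sigma_f)S_i=\beta\lambda_i^2$ yields $\lambda_i=\sqrt{1+\tfrac{\sigma_f}{\beta}S_{i-1}}\,\sqrt{S_i}\geq\sqrt{S_i}$, hence
\[
\sqrt{S_{k+1}}-\sqrt{S_k}=\frac{\lambda_{k+1}}{\sqrt{S_{k+1}}+\sqrt{S_k}}\geq\frac{\lambda_{k+1}}{2\sqrt{S_{k+1}}}=\frac12\sqrt{1+\tfrac{\sigma_f}{\beta}S_k}\geq\max\Big\{\tfrac12,\ \tfrac12\sqrt{\tfrac{\sigma_f}{\beta}}\,\sqrt{S_k}\Big\}.
\]
Iterating from $S_0=1$ gives $\sqrt{S_k}\geq(k+2)/2$ and $\sqrt{S_k}\geq(1+t)^k$ with $t:=\tfrac12\sqrt{\sigma_f/\beta}$, i.e. $1/S_k\leq\min\{4/(k+2)^2,(1+\tfrac12\sqrt{\sigma_f\sigma_d/(L-\bar\sigma_f\sigma_d)})^{-2k}\}$, which upon multiplication by $(L-\bar\sigma_f\sigma_d)l_d(\z_k;x^*)/\sigma_d$ produces the first $\min$ in the statement.

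For $\sum_{i=0}^k S_i/S_k$ I would argue two ways. The geometric estimate uses $\sqrt{S_{i+1}}\geq(1+t)\sqrt{S_i}$, so $S_i/S_k\leq(1+t)^{-2(k-i)}$ and $\sum_{i=0}^k S_i/S_k\leq\sum_{m\geq0}(1+t)^{-2m}=\frac{(1+t)^2}{(1+t)^2-1}$; the elementary inequality $\frac{(1+t)^2}{(1+t)^2-1}\leq1+\frac1{2t}$ (equivalent to $t\geq0$) then gives the bound $1+\sqrt{(L-\bar\sigma_f\sigma_d)/(\sigma_f\sigma_d)}$. The polynomial estimate uses $\lambda_i\geq\sqrt{S_i}$ again to get $S_{i-1}\leq\tfrac23(S_i^{3/2}-S_{i-1}^{3/2})$, which telescopes to $\sum_{i=0}^{k-1}S_i\leq\tfrac23 S_k^{3/2}$, hence $\sum_{i=0}^k S_i/S_k\leq1+\tfrac23\sqrt{S_k}$; combining this with the matching upper bound on the growth of $S_k$ (from $\sqrt{S_{k+1}}-\sqrt{S_k}\leq\tfrac12+\tfrac1{8\sqrt{S_k}}$ and $\sqrt{S_k}\geq(k+2)/2$, which gives $\sqrt{S_k}\leq1+\tfrac k2+\tfrac14\log(k+2)$) and simplifying constants yields $\sum_{i=0}^k S_i/S_k\leq\tfrac k3+\tfrac16\log(k+2)+1$. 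This last polynomial estimate, with its logarithmic correction, is the one genuinely delicate computation; the remaining ingredients are routine bookkeeping. Taking the two families of bounds together and using $l_d(\z_k;x^*)\le d(x^*)$ where convenient establishes part~(i).

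For part~(ii) we are in the conditional-gradient regime $\sigma_f=\bar\sigma_f=0$, $\beta_k\equiv0$, so Remark~\ref{CGM-gen-bound} applies to the modified method and, using $\norm{\hat{x}_i-x_i}^2=\tfrac{\lambda_i^2}{S_i^2}\norm{\w_i-\z_{i-1}}^2$, gives $f(\hat{x}_k)-f(x^*)\leq\tfrac1{S_k}\big(\tfrac12\sum_{i=0}^k L(x_i)\tfrac{\lambda_i^2}{S_i}\norm{\w_i-\z_{i-1}}^2+\sum_{i=0}^k S_i\,\delta(x_i,\hat{x}_i)\big)$. With $\lambda_i=(i+1)/2$ one has $S_i=(i+1)(i+2)/4$, $\lambda_i^2/S_i=(i+1)/(i+2)$, and $\sum_{i=0}^k S_i=(k+1)(k+2)(k+3)/12$. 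Substituting $L(x_i)=L$, $\delta(x_i,\hat{x}_i)=\delta$, $\norm{\w_i-\z_{i-1}}^2\leq\max_{0\leq j\leq k}\norm{\w_j-\z_{j-1}}^2$, and the inequality $\sum_{i=0}^k(i+1)/(i+2)\leq(k+1)(k+2)/(k+4)$ from (\ref{FG-prop}), the two terms reduce to $\tfrac{2L}{k+4}\max_{0\leq j\leq k}\norm{\w_j-\z_{j-1}}^2$ and $\tfrac{k+3}{3}\delta$ respectively, which is the claimed bound.
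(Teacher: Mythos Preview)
Your reduction of part~(i) to bounding $1/S_k$ and $\sum_{i=0}^k S_i/S_k$ is exactly right, and your argument for part~(ii), for the bound $1/S_k\le\min\{4/(k+2)^2,(1+t)^{-2k}\}$, and for the geometric bound $\sum S_i/S_k\le 1+\sqrt{(L-\bar\sigma_f\sigma_d)/(\sigma_f\sigma_d)}$ all match the paper's route.

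However, your \emph{polynomial} bound on $\sum_{i=0}^k S_i/S_k$ has a genuine gap. You bound $\sum_{i\le k} S_i/S_k\le 1+\tfrac23\sqrt{S_k}$ and then claim the increment inequality $\sqrt{S_{k+1}}-\sqrt{S_k}\le \tfrac12+\tfrac1{8\sqrt{S_k}}$ to cap $\sqrt{S_k}$. That increment inequality holds only when $r:=\sigma_f/\beta=0$: from $(S_{k+1}-S_k)^2=S_{k+1}(1+rS_k)$ one gets $\sqrt{S_{k+1}}-\sqrt{S_k}\ge\tfrac12\sqrt{1+rS_k}$, which diverges for $r>0$, and a direct check (e.g.\ $r=1$, $S_0=1$, $S_1=2+\sqrt3$) shows your upper bound on the increment fails. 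Since the theorem asserts the polynomial branch $\tfrac{k}{3}+\tfrac16\log(k+2)+1$ of the $\min$ for \emph{all} $\sigma_f\ge 0$, the case $\sigma_f>0$ is not covered by your argument. The paper closes this gap by a comparison lemma (Appendix, Lemma~A.3): writing $T_k$ for the sequence with $r=0$, one proves $\sum_{i=0}^k S_i/S_k\le \sum_{i=0}^k T_i/T_k$ (equivalently $S_k/S_{k+1}\le T_k/T_{k+1}$, shown by an inductive monotonicity argument on $\tfrac{1+rS_k}{S_k}\ge\tfrac1{T_k}$), and only then applies the $r=0$ upper bound on $\sqrt{T_k}$. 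Even granting this reduction, your telescoping route gives $1+\tfrac23\sqrt{T_k}\le \tfrac{k}{3}+\tfrac16\log(k+2)+\tfrac53$, not the stated constant~$1$; the paper instead proves $\sum_i T_i/T_k\le U_k:=\tfrac{k}{3}+\tfrac16\log(k+2)+1$ directly by induction, using $t_{k+1}=\sqrt{T_{k+1}}\le\tfrac32 U_k$ to verify $1+\tfrac{T_k}{T_{k+1}}U_k\le U_{k+1}$.
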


\begin{proof}
By Theorem \ref{gen-conv-struc}, we have the estimate (\ref{gen-est-struc}) with
\begin{eqnarray*}
C_k &=& \frac{1}{2}\sum_{i=0}^k S_i\left(L(x_i)-\sigma_d\left(\bar\sigma_f + \frac{S_{i}(\beta_{i-1}+S_{i-1}\sigma_f)}{\lambda_i^2}\right)\right)\norm{\hat{x}_i-x_i}^2+\sum_{i=0}^k S_i\delta(x_i,\hat{x}_i)\\
	&=& \frac{1}{2}\sum_{i=0}^k \frac{\lambda_i^2}{S_{i}}\left(L-\sigma_d\left(\bar\sigma_f + \frac{S_{i}(\beta_{i-1}+S_{i-1}\sigma_f)}{\lambda_i^2}\right)\right)\norm{\w_{i}-\z_{i-1}}^2 + \sum_{i=0}^kS_i\delta.
\end{eqnarray*}
(i) Notice that, since $\lambda_{k+1}+S_k = S_{k+1}$, (\ref{param-struc-modif}) eliminates the above first summation so that we have $C_k=\sum_{i=0}^kS_i\delta$. Therefore, using Lemmas \ref{lem-S_k-lower-bound-b} to \ref{lem-S_k-ub-nonst}, given at Appendix, for the analysis of (\ref{param-struc-modif}), (\ref{gen-est-struc}) leads to the assertion.

\noindent(ii) Letting $\lambda_k = (k+1)/2$, $\beta_k =0$, and $\sigma_f = 0$ in Theorem \ref{gen-conv-struc} with $C_k$ described above and using the inequality (\ref{FG-prop}) establish that
\[
f(\hat{x}_k)-f(x^*)
\leq
\frac{C_k}{S_k} = \frac{L\sum_{i=0}^k\frac{\lambda_i^2}{S_i}\norm{\w_i-\z_{i-1}}^2}{2S_k} + \frac{\sum_{i=0}^kS_i\delta}{S_k}
\leq
\frac{2L\max_{0 \leq i \leq k}\norm{\w_i-\z_{i-1}}^2}{k+4} + \frac{k+3}{3}\delta.
\]
\end{proof}

When $\delta>0$, the bounds obtained in Theorem  \ref{est-struc-modif-st} (i) and (ii) diverge as $k \to \infty$ unless $\sigma_f>0$ (strongly convex case) for the assertion (i).
Thus, the parameter $\delta\geq 0$ must be sufficiently small in order to ensure an approximate solution with a  desired precision. One can see further discussions on these bounds in \cite{DGNs,DGN}.

In the non-strongly convex case $\sigma_f=\bar\sigma_f =0$, Tseng's PGMs \cite{Tseng10} are derived from the modified method with the model (\ref{eMD-model}) or (\ref{DA-model}) and Nesterov's PGM \cite{Nes05s} is derived with the hybrid model (\ref{hybrid-model}). From these facts, one can conclude that the first result of Theorem \ref{est-struc-modif-st} yields the strongly convex versions of Tseng's and Nesterov's PGMs with optimal complexity (see \cite{DGNs} for the verification of the optimality).
The fast/accelerated gradient method in \cite{DGNs,DGN,Nes13} for strongly convex problems are different from these three particularizations of the models (\ref{eMD-model}) to (\ref{hybrid-model}).

Let us consider the Euclidean setting $d(x)=\frac{1}{2}\norm{x-x_0}_2^2,~\sigma_d = 1$.
The first assertion of Theorem \ref{est-struc-modif-st}, applied to the convex problems with inexact oracle model (recall the example (iii) in Section \ref{sssec-examples-struc} and the fact that $\sigma_f = \bar{\sigma}_f$), is slightly better than the estimate \cite[Theorem 7]{DGNs} in view of $(L-\sigma_f)l_d(\z_k;x^*)\leq Ld(x^*)$ and $\frac{L-\sigma_f}{\sigma_f} \leq \frac{L}{\sigma_f}$.
Furthermore, the first assertion applied to the composite problems $\min_{x \in Q}[f(x)\equiv f_0(x)+\varPsi(x)]$ (the example (ii) in Section \ref{sssec-examples-struc}) is the same as Nesterov's one \cite[Theorem 6]{Nes13} with $\gamma_u = 2$ (recall that $\bar\sigma_f = \sigma_{f_0}=0, \sigma_f=\sigma_{\varPsi}$).
Therefore, Method \ref{gen-alg-struc} achieves the optimal complexity for smooth and strongly convex problems (see Section \ref{ssec-existing-struc}). 

The second result of Theorem \ref{est-struc-modif-st} matches the conclusion for the classical CGM observed in \cite[Section 5.2.1]{FG14}. If we further assume  $f \in C_L^{1,1}(Q)$, then the corresponding implementation of the second assertion with the extended MD model (\ref{eMD-model}) and the DA model (\ref{DA-model}) yield particular instances of the CGMs proposed by Lan \cite{Lan14a} (see Section \ref{sssec-existing-struc}).


\subsection{Optimal convergence rates of the modified method for weakly smooth problems}
\label{ssec-eff-modif-weak}

Considering structured problems in the case when $\delta(y,x)=\frac{M(y)}{\rho}\norm{y-x}^{\rho},~\rho \in [1,2)$, we can provide convergence analysis for problems involving weakly smooth functions of the class $C_{M}^{1,\rho-1}(Q)$ (see examples (iv) and (v) in Section \ref{sssec-examples-struc}).
Note that the smooth case $\rho=2$ reduces to the situation $\delta(y,x)=0$ which has been already discussed.
In this section, we show convergence results of modified proximal/conditional gradient methods for this setting.
In the case $\rho=1$, the results from Sections \ref{sssec-general-bound} to \ref{sssec-convergence-st} can be seen as variants of stochastic gradient methods developed in \cite{CLP12,GL12} for the deterministic setting.


\subsubsection{General convergence estimates of the modified method for weakly smooth problems}
\label{sssec-general-bound}

Our analysis for proximal gradient methods is based on the following lemma.

\begin{lem}\label{Holder-bound-lemma}
Consider a structured problem in the class $\mathcal{SP}(m_f,\sigma_f,\bar\sigma_f,L,\delta)$.
Assume that $\delta(y,x)=\frac{M(y)}{\rho}\norm{y-x}^\rho,~\rho \in [1,2)$, $M(\cdot)\geq 0$.
Let $\{(\z_{k-1},\w_{k-1},x_k,\hat{x}_k)\}_{k \geq 0}$ be generated by the modified method of Method \ref{gen-alg-struc} with weight parameters $\{\lambda_k\}_{k\geq 0}$ and scaling parameters $\{\beta_k\}_{k\geq -1}$. Put $\alpha_k := L(x_k)-\sigma_d\left(\bar\sigma_f + \frac{S_{k}(\beta_{k-1}+S_{k-1}\sigma_f)}{\lambda_k^2}\right)$. If $\alpha_i<0$ for each $0\leq i \leq k$, then we have 
\[ f(\hat{x}_k)-f(x^*) +\sigma_f\xi(\z_k,x^*) \leq \frac{\beta_k l_d(\z_k;x^*)}{S_k}+\frac{(2-\rho)\max_{0\leq i \leq k}M(x_i)^{\frac{2}{2-\rho}}}{2\rho S_k}\sum_{i=0}^k\frac{S_i}{(-\alpha_i)^{\frac{\rho}{2-\rho}}}.
\]
\end{lem}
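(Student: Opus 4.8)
The plan is to start from the general convergence estimate for the modified method, Theorem~\ref{gen-conv-struc}, and then bound the resulting constant $C_k$ by exploiting the negativity of each $\alpha_i$ together with Young's inequality in the form $ab \le \frac{a^p}{p}+\frac{b^q}{q}$ for conjugate exponents. Recall that Theorem~\ref{gen-conv-struc} gives, for the modified method,
\[
f(\hat{x}_k)-f(x^*)+\sigma_f\xi(\z_k,x^*) \le \frac{\beta_k l_d(\z_k;x^*)+C_k}{S_k},
\]
with
\[
C_k = \frac{1}{2}\sum_{i=0}^k S_i\,\alpha_i\,\norm{\hat{x}_i-x_i}^2 + \sum_{i=0}^k S_i\,\delta(x_i,\hat{x}_i),
\]
where I have written $\alpha_i = L(x_i)-\sigma_d\bigl(\bar\sigma_f+\frac{S_i(\beta_{i-1}+S_{i-1}\sigma_f)}{\lambda_i^2}\bigr)$ exactly as in the lemma statement. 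Since $\delta(y,x)=\frac{M(y)}{\rho}\norm{y-x}^\rho$, each summand of $C_k$ has the form $S_i\bigl(\frac{\alpha_i}{2}\norm{\hat{x}_i-x_i}^2 + \frac{M(x_i)}{\rho}\norm{\hat{x}_i-x_i}^\rho\bigr)$, so the whole problem reduces to maximizing, over $t := \norm{\hat{x}_i-x_i}\ge 0$, the scalar function $h(t) := -\frac{|\alpha_i|}{2}t^2 + \frac{M(x_i)}{\rho}t^\rho$ (using $\alpha_i<0$).

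\textbf{Key steps.} First, maximize $h$ over $t\ge 0$: since $\rho\in[1,2)$, the term $t^\rho$ grows slower than $t^2$, so $h$ is bounded above; setting $h'(t)=0$ gives $t^{2-\rho} = M(x_i)/|\alpha_i|$, i.e. the maximizer is $t_* = (M(x_i)/|\alpha_i|)^{1/(2-\rho)}$. Substituting back and simplifying the exponents yields
\[
\max_{t\ge 0} h(t) = \Bigl(\frac{1}{\rho}-\frac{1}{2}\Bigr)\,\frac{M(x_i)^{2/(2-\rho)}}{|\alpha_i|^{\rho/(2-\rho)}} = \frac{2-\rho}{2\rho}\,\frac{M(x_i)^{2/(2-\rho)}}{(-\alpha_i)^{\rho/(2-\rho)}}.
\]
(Equivalently, one can get the same bound by writing $\frac{M(x_i)}{\rho}t^\rho = \frac{M(x_i)}{\rho}t^\rho$ and applying Young's inequality with exponents $2/\rho$ and $2/(2-\rho)$ to split off $\frac{|\alpha_i|}{2}t^2$; both routes give the same constant.) Second, replace $M(x_i)$ by $\max_{0\le j\le k}M(x_j)$ in each term, pull this common factor out of the sum, and combine with the leftover $\sum S_i$-weighted factors to obtain
\[
C_k \le \frac{2-\rho}{2\rho}\,\Bigl(\max_{0\le i\le k}M(x_i)\Bigr)^{2/(2-\rho)}\sum_{i=0}^k \frac{S_i}{(-\alpha_i)^{\rho/(2-\rho)}}.
\]
Third, divide by $S_k$ and substitute into the estimate from Theorem~\ref{gen-conv-struc} to conclude.

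\textbf{Expected main obstacle.} There is no deep obstacle here; the lemma is essentially a one-variable optimization wrapped around Theorem~\ref{gen-conv-struc}. The only points requiring care are (a) tracking the exponents correctly when computing $\max_t h(t)$ — in particular checking that $\frac{1}{\rho}\cdot\frac{M}{|\alpha|}\cdot(\tfrac{M}{|\alpha|})^{\rho/(2-\rho)}$ collapses to the claimed $\frac{2-\rho}{2\rho}M^{2/(2-\rho)}|\alpha|^{-\rho/(2-\rho)}$, using $\frac{\rho}{2-\rho}+1 = \frac{2}{2-\rho}$; and (b) handling the boundary case $\rho=1$ (where $\delta$ is linear in the norm and $t_*=M/|\alpha|$, still finite). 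Both are routine. One should also note that $\hat x_i - x_i = \frac{\lambda_i}{S_i}(\w_i - \z_{i-1})$ is not needed for this lemma — unlike in Theorem~\ref{est-struc-modif-st}, here we keep the bound in terms of $\norm{\hat x_i - x_i}$ implicitly through the pointwise maximization, so no diameter argument enters.
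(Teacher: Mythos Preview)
Your proposal is correct and follows essentially the same approach as the paper: apply Theorem~\ref{gen-conv-struc} for the modified method and then bound each summand $S_i\bigl(\tfrac{\alpha_i}{2}\norm{\hat{x}_i-x_i}^2+\tfrac{M(x_i)}{\rho}\norm{\hat{x}_i-x_i}^\rho\bigr)$ by maximizing the scalar function $t\mapsto \tfrac{\alpha_i}{2}t^2+\tfrac{M(x_i)}{\rho}t^\rho$ over $t\ge 0$, obtaining exactly $\tfrac{2-\rho}{2\rho}M(x_i)^{2/(2-\rho)}(-\alpha_i)^{-\rho/(2-\rho)}$. The paper states this maximization as a one-line fact and then substitutes, which is precisely your argument; your remarks about the exponent bookkeeping and the $\rho=1$ boundary case are accurate and the diameter observation is also correct (it is not needed here).
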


\begin{proof}
Note that the function $g(r)=ar^2+br^{\rho}$ for $r \geq 0, a<0, b \in \Real$ satisfies $\max_{r \geq 0}g(r)=\frac{2-\rho}{2\rho}(-2a)^{\frac{-\rho}{2-\rho}}(\rho b)^{\frac{2}{2-\rho}}$.
Hence, Theorem \ref{gen-conv-struc} concludes that
\begin{eqnarray*}
f(\hat{x}_k)-f(x^*) +\sigma_f\xi(\z_k,x^*)
&\leq&
\frac{\beta_k l_d(\z_k;x^*)}{S_k}+\frac{1}{S_k}\sum_{i=0}^kS_i\left(\frac{1}{2}\alpha_i\norm{\hat{x}_i-x_i}^2+\frac{M(x_i)}{\rho}\norm{\hat{x}_i-x_i}^{\rho}\right)\\
&\leq&
\frac{\beta_k l_d(\z_k;x^*)}{S_k}+\frac{1}{S_k}\sum_{i=0}^kS_i\times\frac{2-\rho}{2\rho}(-\alpha_i)^{\frac{-\rho}{2-\rho}}M(x_i)^{\frac{2}{2-\rho}},
\end{eqnarray*}
which proves the assertion.
\end{proof}

\subsubsection{Optimal convergence rates for the non-strongly convex case}
\label{sssec-convergence-nonst}

Let us deduce a convergence result of PGMs given by the modified method of Method \ref{gen-alg-struc} for the non-strongly convex case $\sigma_f=\bar{\sigma}_f=0$.
The result with $\rho=1$ is closely related to the deterministic versions of \cite[Proposition 8]{GL12} and \cite[Corollary 1]{CLP12}.

\begin{thm}\label{Holder-nonst}
Consider a structured problem in the class $\mathcal{SP}(m_f,\sigma_f,\bar\sigma_f,L,\delta)$.
Assume additionally that $L(\cdot)=L\geq 0$, $\sigma_f=\bar\sigma_f=0$, and $\delta(y,x)=\frac{M(y)}{\rho}\norm{y-x}^\rho$ for $\rho \in [1,2)$, $M(\cdot)\geq 0$.
Let $\{(\z_{k-1},\w_{k-1},x_k,\hat{x}_k)\}_{k \geq 0}$ be generated by the modified method of Method \ref{gen-alg-struc} with
\[ \lambda_k:=\frac{k+1}{2},\quad \beta_{k}:=\frac{L}{\sigma_d}+\frac{\gamma}{\sigma_d}(k+3)^{\frac{3}{2}(2-\rho)},\quad \gamma > 0. \]
Then, for every $k \geq 0$, we have
\[ f(\hat{x}_k)-f(x^*) \leq
\frac{4Ll_d(\z_k;x^*)}{\sigma_d(k+1)(k+2)}+
\left[\frac{4\gamma l_d(\z_k;x^*)}{\sigma_d}
+ \frac{\max_{0\leq i \leq k}M(x_i)^{\frac{2}{2-\rho}}}{3\rho\gamma^{\frac{\rho}{2-\rho}}}\right]\frac{(k+3)^{\frac{3}{2}(2-\rho)}}{(k+1)(k+2)}. \]
\end{thm}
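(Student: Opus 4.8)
The plan is to apply Lemma \ref{Holder-bound-lemma}, which already treats precisely the case $\delta(y,x)=\frac{M(y)}{\rho}\norm{y-x}^\rho$. Unlike in Theorem \ref{est-struc-modif-st}, the weight and scaling parameters here are explicit, so none of the appendix estimates are needed: everything reduces to (a) checking the hypothesis $\alpha_i<0$ of that lemma and (b) bounding the sum $\sum_{i=0}^k S_i/(-\alpha_i)^{\rho/(2-\rho)}$ it produces. First I would record the closed forms $S_k=\frac{(k+1)(k+2)}{4}$ and $\lambda_k^2=\frac{(k+1)^2}{4}$, so that $S_k/\lambda_k^2=\frac{k+2}{k+1}$, and note that $\sigma_d\beta_{i-1}=L+\gamma(i+2)^{\frac{3}{2}(2-\rho)}$. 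Substituting these together with $L(\cdot)=L$ and $\sigma_f=\bar\sigma_f=0$ into $\alpha_i:=L(x_i)-\sigma_d(\bar\sigma_f+S_i(\beta_{i-1}+S_{i-1}\sigma_f)/\lambda_i^2)$ gives
\[ \alpha_i = -\frac{L}{i+1}-\frac{\gamma(i+2)^{\frac{3}{2}(2-\rho)+1}}{i+1}<0\qquad(0\leq i\leq k), \]
so the hypothesis of Lemma \ref{Holder-bound-lemma} holds for every $k\geq 0$.

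Next I would estimate the sum. Discarding the $L$-term and using $\frac{i+2}{i+1}\geq 1$ yields the lower bound $-\alpha_i\geq \gamma(i+2)^{\frac{3}{2}(2-\rho)}$; raising this to the power $p:=\rho/(2-\rho)>0$ and using $\frac{3}{2}(2-\rho)p=\frac{3\rho}{2}$ gives $(-\alpha_i)^{-p}\leq\gamma^{-p}(i+2)^{-3\rho/2}$, so that
\[ \frac{S_i}{(-\alpha_i)^{p}} \leq \frac{(i+1)(i+2)}{4\gamma^p}\,(i+2)^{-3\rho/2} \leq \frac{(i+2)^{2-\frac{3\rho}{2}}}{4\gamma^p}. \]
Since $\rho<2$, the exponent $q:=2-\frac{3\rho}{2}$ satisfies $q>-1$, so a routine integral comparison (splitting into the cases $q\geq 0$ and $-1<q<0$, where $t\mapsto t^q$ is increasing resp.\ decreasing) gives $\sum_{i=0}^k(i+2)^q\leq\frac{(k+3)^{q+1}}{q+1}=\frac{2(k+3)^{\frac{3}{2}(2-\rho)}}{3(2-\rho)}$, hence
\[ \sum_{i=0}^k\frac{S_i}{(-\alpha_i)^{p}} \leq \frac{(k+3)^{\frac{3}{2}(2-\rho)}}{6\gamma^{p}(2-\rho)}. \]

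Finally I would substitute this into the estimate of Lemma \ref{Holder-bound-lemma}; with $\sigma_f=0$ its left side is just $f(\hat{x}_k)-f(x^*)$. Its first term, using $\beta_k=\frac{1}{\sigma_d}(L+\gamma(k+3)^{\frac{3}{2}(2-\rho)})$ and $1/S_k=4/((k+1)(k+2))$, expands into $\frac{4Ll_d(\z_k;x^*)}{\sigma_d(k+1)(k+2)}+\frac{4\gamma l_d(\z_k;x^*)}{\sigma_d}\cdot\frac{(k+3)^{\frac{3}{2}(2-\rho)}}{(k+1)(k+2)}$; its second term becomes
\[ \frac{(2-\rho)\max_{0\leq i\leq k}M(x_i)^{\frac{2}{2-\rho}}}{2\rho S_k}\cdot\frac{(k+3)^{\frac{3}{2}(2-\rho)}}{6\gamma^{p}(2-\rho)} = \frac{\max_{0\leq i\leq k}M(x_i)^{\frac{2}{2-\rho}}}{3\rho\gamma^{\rho/(2-\rho)}}\cdot\frac{(k+3)^{\frac{3}{2}(2-\rho)}}{(k+1)(k+2)}, \]
where the numerical constants combine as $\tfrac12\cdot\tfrac16\cdot 4=\tfrac13$. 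Adding the two pieces gives exactly the asserted bound. The only delicate point is the integral estimate for $\sum(i+2)^q$, since the sign of $q=2-\frac{3\rho}{2}$ is not fixed over $\rho\in[1,2)$; beyond that two-case argument and the bookkeeping needed to make the constants line up with $\frac{1}{3\rho\gamma^{\rho/(2-\rho)}}$, there is no real obstacle, because Lemma \ref{Holder-bound-lemma} already carries all the analytic content.
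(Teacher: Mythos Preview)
Your proof is correct and follows essentially the same route as the paper's: both apply Lemma~\ref{Holder-bound-lemma}, compute $\alpha_i=-\frac{L}{i+1}-\gamma\frac{(i+2)^{\frac{3}{2}(2-\rho)+1}}{i+1}$, drop the $L$-term, reduce $\frac{S_i}{(-\alpha_i)^{\rho/(2-\rho)}}$ to $\frac{1}{4\gamma^{\rho/(2-\rho)}}(i+2)^{2-\frac{3}{2}\rho}$ via $i+1\leq i+2$, and then use the integral bound $\sum_{i=0}^k(i+2)^q\leq\frac{1}{q+1}(k+3)^{q+1}$ for $q>-1$. The only cosmetic difference is that you apply $\frac{i+2}{i+1}\geq 1$ to $-\alpha_i$ before raising to the power $\rho/(2-\rho)$, whereas the paper does so afterward; the resulting estimates and constants are identical.
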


\begin{proof}
We apply Lemma \ref{Holder-bound-lemma} to prove the assertion. Note that
\begin{equation}\label{Holder-nonst-item-1}
\frac{\beta_k}{S_k}=\frac{4L}{\sigma_d(k+1)(k+2)}+\frac{4\gamma(k+3)^{\frac{3}{2}(2-\rho)}}{\sigma_d(k+1)(k+2)}
\end{equation}
and $\alpha_k$ in Lemma \ref{Holder-bound-lemma} becomes now
$\alpha_k = -\frac{L}{k+1} -\gamma\frac{(k+2)^{\frac{3}{2}(2-\rho)+1}}{k+1} \leq -\gamma\frac{(k+2)^{\frac{3}{2}(2-\rho)+1}}{k+1} <0$.
Furthermore, we have
\begin{eqnarray}
\frac{1}{S_k}\sum_{i=0}^k\frac{S_i}{(-\alpha_i)^{\frac{\rho}{2-\rho}}}
&\leq&
\frac{1}{S_k}\sum_{i=0}^k\frac{(i+1)^{\frac{\rho}{2-\rho}+1}}{4\gamma^{\frac{\rho}{2-\rho}}(i+2)^{\frac{3}{2}\rho+\frac{\rho}{2-\rho}-1}}
\leq
\frac{1}{4\gamma^{\frac{\rho}{2-\rho}}S_k}\sum_{i=0}^k(i+2)^{2-\frac{3}{2}\rho}\nonumber\\
&\leq&
\frac{1}{4\gamma^{\frac{\rho}{2-\rho}}S_k}\frac{2}{3(2-\rho)}(k+3)^{3-\frac{3}{2}\rho}
= \frac{2(k+3)^{\frac{3}{2}(2-\rho)}}{3(2-\rho)\gamma^{\frac{\rho}{2-\rho}}(k+1)(k+2)},
\label{Holder-nonst-item-2}
\end{eqnarray}
where the second and the third inequalities are due to $i+1 \leq i+2$ and the fact
$ \sum_{i=0}^k(i+2)^q \leq \frac{1}{1+q}(k+3)^{1+q},~\forall q > -1$, respectively.
Consequently, the theorem follows by applying Lemma \ref{Holder-bound-lemma} with the inequalities (\ref{Holder-nonst-item-1}) and (\ref{Holder-nonst-item-2}).
\end{proof}

Notice that we need the parameter $\rho$ to define $\beta_k$ but not the $M(\cdot)$. Now let us calculate an efficient choice for $\gamma$.
Suppose that $M(\cdot)\leq\hat{M}<+\infty$.
Using $l_d(z_k;x^*) \leq d(x^*)$ and the fact that the function $g(\gamma)=a\gamma+\frac{b}{\gamma^p}~ (a,b,p>0)$ attains its minimum at $\gamma^*=(pb/a)^{\frac{1}{p+1}}$ on $(0,\infty)$ with $g(\gamma^*)=(p+1)p^{\frac{-p}{p+1}}a^{\frac{p}{p+1}}b^{\frac{1}{p+1}}$,
the choice
\[
\gamma=\gamma^*:=
\left(\frac{\rho}{2-\rho}\frac{\hat{M}^{\frac{2}{2-\rho}}}{3\rho}\frac{\sigma_d}{4d(x^*)}\right)^{\frac{2-\rho}{2}}
=
\hat{M}\left(\frac{\sigma_d}{12(2-\rho)d(x^*)}\right)^{\frac{2-\rho}{2}}\]
makes the estimate of Theorem \ref{Holder-nonst} as follows:
\begin{eqnarray*}
f(\hat{x}_k)-f(x^*)
&\leq&
\frac{4Ld(x^*)}{\sigma_d(k+1)(k+2)}+
\frac{2}{2-\rho}\left(\frac{\rho}{2-\rho}\right)^{-\frac{\rho}{2}}\left(\frac{4d(x^*)}{\sigma_d}\right)^{\frac{\rho}{2}}\left(\frac{\hat{M}^{\frac{2}{2-\rho}}}{3\rho}\right)^{\frac{2-\rho}{2}}\frac{(k+3)^{\frac{3}{2}(2-\rho)}}{(k+1)(k+2)}\\
&=&
\frac{4Ld(x^*)}{\sigma_d(k+1)(k+2)}+
\frac{2(2\sqrt{3})^{\rho}}{3\rho(2-\rho)^{\frac{2-\rho}{2}}}\hat{M}\left(\frac{d(x^*)}{\sigma_d}\right)^{\frac{\rho}{2}}\frac{(k+3)^{\frac{3}{2}(2-\rho)}}{(k+1)(k+2)}.
\end{eqnarray*}
Note that $\min_{x > 0}x^x = (1/e)^{1/e}$ and $\max_{\rho\in[1,2]}\frac{2}{3\rho}(2\sqrt{3})^\rho=\frac{2}{3\cdot 2}(2\sqrt{3})^2=4$ because $\log(2\sqrt{3})>1$ implies the positivity of the derivative of $\frac{2}{3\rho}(2\sqrt{3})^\rho$.
Therefore, we have $\frac{2(2\sqrt{3})^{\rho}}{3\rho(2-\rho)^{\frac{2-\rho}{2}}} \leq 4e^{1/(2e)}$
which shows $f(\hat{x}_k)-f(x^*) \leq O\left(\frac{Ld(x^*)}{\sigma_d}k^{-2}+\hat{M}\left(\frac{d(x^*)}{\sigma_d}\right)^{\frac{\rho}{2}}k^{-\frac{3\rho-2}{2}} \right)$.
Consequently, we obtain an upper bound of the iteration complexity to obtain $f(\hat{x}_k)-f(x^*)\leq \varepsilon$ which is proportional to
\[
\left(\frac{Ld(x^*)}{\sigma_d\varepsilon}\right)^{\frac{1}{2}}+\left(\frac{d(x^*)}{\sigma_d}\right)^{\frac{\rho}{3\rho-2}}\left(\frac{\hat{M}}{\varepsilon}\right)^{\frac{2}{3\rho-2}}.
\]
In view of the lower complexity (\ref{lower-compl-weak}) (with $L$ replaced by $\hat{M}$ there), it turns out that the order of the second term is optimal for the class $C_{\hat{M}}^{1,\rho-1}(Q)$.

\subsubsection{Optimal convergence rate for the strongly convex case}
\label{sssec-convergence-st}

Now we show a convergence result of PGMs for the strongly convex case $\sigma_f > 0$.

\begin{thm} 
Consider a structured problem in the class $\mathcal{SP}(m_f,\sigma_f,\bar\sigma_f,L,\delta)$.
Assume additionally that $L(\cdot)=L\geq 0$, $\sigma_f>0$, and $\delta(y,x)=\frac{M(y)}{\rho}\norm{y-x}^\rho$ for $\rho \in [1,2)$, $M(\cdot)\geq 0$.
Let $\{(\z_{k-1},\w_{k-1},x_k,\hat{x}_k)\}_{k \geq 0}$ be generated by the modified method of Method \ref{gen-alg-struc} with
\[ \lambda_k := \frac{1}{p+1}(k+1)^{p},\quad \beta_{k} := \left(\frac{L}{\sigma_d}+\beta\right)(k+2)^{p-1}  \]
where $p\geq1$ and $\beta \geq 0$ with $\sigma_d\bar\sigma_f+pL+(p+1)\sigma_d\beta > 0$.
Then, for every $k \geq 0$, we have
\begin{eqnarray*}
f(\hat{x}_k)-f(x^*) +\sigma_f\xi(z_k, x^*) &\leq&
\left(\frac{L}{\sigma_d}+\beta\right)(p+1)^{2}l_d(\z_k;x^*)\frac{(k+2)^{p-1}}{(k+1)^{p+1}}\\
&& + \frac{(p+1)(2-\rho)\max_{0\leq i\leq k}M(x_i)^{\frac{2}{2-\rho}}}{2\rho(\sigma_d\bar\sigma_f+pL+(p+1)\sigma_d\beta)^{\frac{\rho}{2-\rho}}}\frac{1}{(k+1)^{p+1}} \\
&& + \frac{3^{p+1}(2-\rho)\max_{0\leq i\leq k}M(x_i)^{\frac{2}{2-\rho}}}{2\rho} \left(\frac{2^{p-1}(p+1)^2}{\sigma_d\sigma_f}\right)^{\frac{\rho}{2-\rho}}  P(k),
\end{eqnarray*}
where
\[
P(k)=\left\{
\begin{array}{ll}
\left(p+2-\frac{2\rho}{2-\rho}\right)^{-1}(k+1)^{-\frac{3\rho-2}{2-\rho}} & : p+1 > \frac{3\rho-2}{2-\rho},\\[2mm]
\ds\frac{1+\log k}{(k+1)^{p+1}} & : p+1 = \frac{3\rho-2}{2-\rho},\\
\ds\frac{1 - \left(p+2-\frac{2\rho}{2-\rho}\right)^{-1}}{(k+1)^{p+1}} & : p+1 < \frac{3\rho-2}{2-\rho}.
\end{array}
\right.
\]
\end{thm}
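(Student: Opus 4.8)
The proof is an application of Lemma~\ref{Holder-bound-lemma}. The plan is: (i) obtain two-sided estimates for $S_k=\sum_{i=0}^k\lambda_i$; (ii) check the hypothesis $\alpha_i<0$ of that lemma and extract explicit lower bounds for $-\alpha_i$, treating $i=0$ separately; (iii) substitute into the bound of Lemma~\ref{Holder-bound-lemma} and estimate $\sum_{i=0}^kS_i(-\alpha_i)^{-\rho/(2-\rho)}$ term by term, the three branches of $P(k)$ arising from an elementary integral comparison. For Step~(i): since $\lambda_i=\frac{1}{p+1}(i+1)^p$, comparing $\sum_{j=1}^{n}j^p$ with $\int_0^{n}t^p\,dt$ and with $\int_1^{n+1}t^p\,dt$ gives
\[
\frac{(k+1)^{p+1}}{(p+1)^2}\ \le\ S_k\ \le\ \frac{(k+2)^{p+1}}{(p+1)^2},\qquad k\ge0.
\]
In particular $\dfrac{\beta_k}{S_k}\le\bigl(\tfrac{L}{\sigma_d}+\beta\bigr)(p+1)^2\dfrac{(k+2)^{p-1}}{(k+1)^{p+1}}$ (the coefficient of $l_d(\z_k;x^*)$ in the first term of the asserted bound), and $\dfrac1{S_k}\le\dfrac{(p+1)^2}{(k+1)^{p+1}}$, used repeatedly below.

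\textbf{Step (ii).} With $\alpha_i=L-\sigma_d\bar\sigma_f-\sigma_d\frac{S_i(\beta_{i-1}+S_{i-1}\sigma_f)}{\lambda_i^2}$, write $-\alpha_i=\bigl(\sigma_d\bar\sigma_f-L+\sigma_d\tfrac{S_i\beta_{i-1}}{\lambda_i^2}\bigr)+\sigma_d\sigma_f\tfrac{S_iS_{i-1}}{\lambda_i^2}$. Since $\beta_{i-1}=\bigl(\tfrac{L}{\sigma_d}+\beta\bigr)(i+1)^{p-1}$ and $S_i\ge(i+1)^{p+1}/(p+1)^2$, one gets $\sigma_d\tfrac{S_i\beta_{i-1}}{\lambda_i^2}\ge L+\sigma_d\beta$, so the first bracket is $\ge\sigma_d(\bar\sigma_f+\beta)\ge0$. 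For $i=0$, where $S_{-1}=0$, $S_0=\lambda_0=\tfrac1{p+1}$ and $\beta_{-1}=\tfrac{L}{\sigma_d}+\beta$, a direct computation gives $-\alpha_0=\sigma_d\bar\sigma_f+pL+(p+1)\sigma_d\beta$, positive by hypothesis. For $i\ge1$, the estimates $S_i\ge\frac{(i+1)^{p+1}}{(p+1)^2}$, $S_{i-1}\ge\frac{i^{p+1}}{(p+1)^2}$, $\lambda_i^2=\frac{(i+1)^{2p}}{(p+1)^2}$ give
\[
\frac{S_iS_{i-1}}{\lambda_i^2}\ \ge\ \frac{i^2}{(p+1)^2}\Bigl(\frac{i}{i+1}\Bigr)^{p-1}\ \ge\ \frac{i^2}{2^{p-1}(p+1)^2}
\]
(using $p\ge1$ and $i/(i+1)\ge\tfrac12$), hence $-\alpha_i\ge\sigma_d\sigma_f\,i^2/(2^{p-1}(p+1)^2)>0$. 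Thus $\alpha_i<0$ for all $0\le i\le k$ and Lemma~\ref{Holder-bound-lemma} applies.

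\textbf{Step (iii).} Split $\sum_{i=0}^k\frac{S_i}{(-\alpha_i)^{\rho/(2-\rho)}}$ into the $i=0$ term and $\sum_{i=1}^k$. The $i=0$ term equals $\frac1{p+1}(\sigma_d\bar\sigma_f+pL+(p+1)\sigma_d\beta)^{-\rho/(2-\rho)}$; multiplying by $\frac{(2-\rho)\max_iM(x_i)^{2/(2-\rho)}}{2\rho S_k}$ and using $1/S_k\le(p+1)^2(k+1)^{-(p+1)}$ gives exactly the second term of the asserted bound. For $i\ge1$, the bounds $S_i\le\frac{(i+2)^{p+1}}{(p+1)^2}\le\frac{3^{p+1}i^{p+1}}{(p+1)^2}$ (valid since $i+2\le3i$) and $-\alpha_i\ge\frac{\sigma_d\sigma_f i^2}{2^{p-1}(p+1)^2}$ yield
\[
\frac{S_i}{(-\alpha_i)^{\rho/(2-\rho)}}\ \le\ \frac{3^{p+1}}{(p+1)^2}\Bigl(\frac{2^{p-1}(p+1)^2}{\sigma_d\sigma_f}\Bigr)^{\rho/(2-\rho)} i^{r},\qquad r:=p+1-\frac{2\rho}{2-\rho}.
\]
The identity $\frac{2\rho}{2-\rho}=\frac{3\rho-2}{2-\rho}+1$ gives $r>-1\iff p+1>\frac{3\rho-2}{2-\rho}$, and $r-p=-\frac{3\rho-2}{2-\rho}$, $r+1=p+2-\frac{2\rho}{2-\rho}$. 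An integral comparison bounds $\sum_{i=1}^ki^{r}$ by $\frac{(k+1)^{r+1}}{r+1}$ if $r>-1$, by $1+\log k$ if $r=-1$, and by $1-\frac1{r+1}$ if $r<-1$; dividing by $S_k$ (again via $1/S_k\le(p+1)^2(k+1)^{-(p+1)}$, which absorbs the factor $(p+1)^{-2}$) produces $P(k)$ in the three respective cases, hence the third term of the asserted bound. Adding the three contributions through Lemma~\ref{Holder-bound-lemma} completes the proof.

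\textbf{Main obstacle.} The argument is mechanical once the splitting is set up (index $i=0$ versus $i\ge1$, and a constant lower bound on $-\alpha_0$ versus a $\sigma_f$-driven quadratic lower bound on $-\alpha_i$). The delicate points are purely bookkeeping: obtaining $-\alpha_i\ge\sigma_d\sigma_f\,i^2/(2^{p-1}(p+1)^2)$ with exactly the power $2^{p-1}$ (which needs the two-factor estimate of $S_iS_{i-1}$ rather than a cruder single bound on $S_{i-1}$), tracking where the powers of $p+1$, $2$ and $3$ land so the final expression matches verbatim, and carrying out the case split for $\sum_{i=1}^ki^{r}$ that generates the three branches of $P(k)$.
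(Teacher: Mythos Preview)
Your proof is correct and follows essentially the same route as the paper's: the same two-sided bounds on $S_k$, the same split of $-\alpha_i$ into a nonnegative $\beta$-part and the $\sigma_f S_iS_{i-1}/\lambda_i^2$ part, the same $i=0$ versus $i\ge1$ separation, the identical estimate $S_iS_{i-1}/\lambda_i^2\ge i^2/(2^{p-1}(p+1)^2)$, and the same integral-comparison trichotomy for $\sum_{i=1}^k i^r$ producing the three branches of $P(k)$. One small bookkeeping item the paper states explicitly and you omit: $\{\beta_k\}$ must be nondecreasing for the framework (and hence Lemma~\ref{Holder-bound-lemma}) to apply, which here is immediate from $p\ge1$.
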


\begin{proof}
Note that $\beta_k$ is non-decreasing and $\frac{1}{(p+1)^2}(k+1)^{p+1}\leq S_k \leq  \frac{1}{(p+1)^2}(k+2)^{p+1}$.
Then, we have
\begin{equation}\label{Holder-st-1}
\frac{\beta_k}{S_k} \leq \left(\frac{L}{\sigma_d}+\beta\right)(p+1)^{2}\frac{(k+2)^{p-1}}{(k+1)^{p+1}}=O(k^{-2}).
\end{equation}
Since the inequalities $\frac{S_k}{\lambda_k^2} \geq \frac{1}{(k+1)^{p-1}}$ and $\frac{S_kS_{k-1}}{\lambda_k^2} \geq \frac{1}{(p+1)^2}\frac{k^{p+1}}{(k+1)^{p-1}} \geq \frac{k^2}{2^{p-1}(p+1)^2}$ for $k\geq 1$ imply
\[
-\alpha_k := \sigma_d\left(\bar\sigma_f+\frac{S_k(\beta_{k-1}+S_{k-1}\sigma_f)}{\lambda_k^2}\right)-L \geq \sigma_d\bar\sigma_f + \beta\sigma_d + \frac{\sigma_d\sigma_f}{2^{p-1}(p+1)^2}k^2 > 0,~~~ k\geq 1,
\]
we obtain 
\[
\frac{S_k}{(-\alpha_k)^{\frac{\rho}{2-\rho}}}
< \frac{1}{(p+1)^2}\left(\frac{2^{p-1}(p+1)^2}{\sigma_d\sigma_f}\right)^{\frac{\rho}{2-\rho}}\frac{(k+2)^{p+1}}{k^{\frac{2\rho}{2-\rho}}}
\leq \frac{3^{p+1}}{(p+1)^2}\left(\frac{2^{p-1}(p+1)^2}{\sigma_d\sigma_f}\right)^{\frac{\rho}{2-\rho}} k^{p+1-\frac{2\rho}{2-\rho}}
\]
for all $k \geq 1$.
Combining with $\frac{S_0}{(-\alpha_0)^{\frac{\rho}{2-\rho}}} = \frac{1}{(p+1)(\sigma_d\bar\sigma_f+pL+(p+1)\sigma_d\beta)^{\frac{\rho}{2-\rho}}}$ yields that
\begin{equation}\label{Holder-st-2}
\frac{1}{S_k}\sum_{i=0}^k\frac{S_i}{(-\alpha_i)^{\frac{\rho}{2-\rho}}}
\leq
\frac{p+1}{(\sigma_d\bar\sigma_f+pL+(p+1)\sigma_d\beta)^{\frac{\rho}{2-\rho}}}\frac{1}{(k+1)^{p+1}} + 3^{p+1} \left(\frac{2^{p-1}(p+1)^2}{\sigma_d\sigma_f}\right)^{\frac{\rho}{2-\rho}}  P(k),
\end{equation}
where the factor $P(k)$ is due to the following inequality:
\[
\sum_{i=1}^ki^q \leq \left\{
\begin{array}{ll}
\frac{1}{1+q}(k+1)^{q+1} & : q >-1,\\
1+\log k & : q = -1,\\
1- \frac{1}{1+q} & : q < -1.
\end{array}
\right.
\]
Consequently, the assertion follows from Lemma \ref{Holder-bound-lemma} with the inequalities (\ref{Holder-st-1}) and (\ref{Holder-st-2}).
\end{proof}
Notice that we do not need $\rho$ and $M(\cdot)$ in the definition of the parameters $\lambda_k,\beta_k$; the result holds for all acceptable $\rho \in [1,2)$.
If we further have $p+1>\frac{3\rho-2}{2-\rho}$, then $P(k)$ has the best rate of convergence for a fixed $\rho$.
Now let us see the above upper bound in the case $L=0,~\sigma_f=\bar\sigma_f>0,~M(\cdot)=M,~\beta=0,~p+1>\frac{3\rho-2}{2-\rho}$:
\[
\begin{array}{lcl}
f(\hat{x}_k)-f(x^*)+\sigma_f\xi(\z_k,x^*)
&\leq&
\frac{(p+1)(2-\rho)M^{\frac{2}{2-\rho}}}{2\rho(\sigma_d\sigma_f)^{\frac{\rho}{2-\rho}}}\frac{1}{(k+1)^{p+1}}
\\&&
+
\frac{3^{p+1}(2-\rho)}{2\rho}M^{\frac{2}{2-\rho}} \left(\frac{2^{p-1}(p+1)^2}{\sigma_d\sigma_f}\right)^{\frac{\rho}{2-\rho}}\left(p+2-\frac{2\rho}{2-\rho}\right)^{-1}(k+1)^{-\frac{3\rho-2}{2-\rho}}.
\end{array}
\]
Since this bound is of $O\left(c(p,\rho)\frac{M^{2/(2-\rho)}}{(\sigma_d\sigma_f)^{\rho/(2-\rho)}}k^{-\frac{3\rho-2}{2-\rho}}\right)$ for a continuous function $c(p,\rho)$, it achieves the optimal complexity (\ref{lower-compl-weak}) for the strongly convex case.
In contrast to the optimal method in \cite{NN85}, we do not need to restart the method and do not require $M$ and an upper bound of $d(x^*)$ in advance%
\footnote{As is indicated in \cite{NN85}, an obvious upper bound of $d(x^*)$ can be obtained if $\nabla{f}(x^*)=0$ and we know $M$ for the weakly smooth problems (example (iv) in Section \ref{sssec-examples-struc}) in the Euclidean setting $d(x)=\frac{1}{2}\norm{x-x_0}_2^2$~: The inequality $d(x^*) \leq \frac{1}{2}(\frac{2M}{\rho\sigma_f})^{2/(2-\rho)}$ follows since we have $\frac{\sigma_f}{2}\norm{x^*-x_0}_2^2 \leq f(x_0)-f(x^*) \leq \frac{M}{\rho}\norm{x_0-x^*}_2^\rho$ (recall the strong convexity and (\ref{struc-ineq})).}%
~to ensure the optimality.

Let us consider the non-smooth case $\rho=1,~\bar\sigma_f=\sigma_f>0$.
Then, taking $p=1$ and $\beta=0$ yields $\lambda_k=(k+1)/2,~\beta_{k-1}=L/\sigma_d$, and
\[ f(\hat{x}_k)-f(x^*)+\sigma_f\xi(z_k,x^*) \leq\frac{4Ll_d(\z_k;x^*)}{\sigma_d(k+1)^2} + \frac{\max_{0\leq i\leq k}M(x_i)^2}{(\sigma_d\sigma_f+L) (k+1)^2} + \frac{18\max_{0\leq i\leq k}M(x_i)^2}{\sigma_d\sigma_f (k+1)}. \]
This result is similar to the ones \cite[Proposition 9]{GL12} and \cite[Corollary 2]{CLP12} in the deterministic case.

\subsubsection{Optimal/nearly optimal convergence rate of conditional gradient methods}
\label{sssec-CGM-weak}
We finally consider the case of conditional gradient methods: $\beta_k \equiv 0,~\sigma_f=\bar\sigma_f=0$. This case can be analyzed without Lemma \ref{Holder-bound-lemma}.

\begin{thm}\label{Holder-CG}
Consider a structured problem in the class $\mathcal{SP}(m_f,\sigma_f,\bar\sigma_f,L,\delta)$.
Assume additionally that $L(\cdot)=L\geq 0$, $\sigma_f=\bar\sigma_f=0$, and $\delta(y,x)=\frac{M}{\rho}\norm{y-x}^\rho$ for $\rho \in [1,2)$, $M \geq 0$.
Then, the modified method of Method \ref{gen-alg-struc} for the problem with $\lambda_k=(k+1)/2$ and $\beta_k \equiv 0$ generates a sequence $\{\hat{x}_k\}_{k \geq 0} \subset Q$ satisfying
\begin{equation}\label{Holder-CG-bound}
f(\hat{x}_k)-f(x^*) \leq \frac{2L{\rm Diam}(Q)^2}{k+4} + \frac{2^{\rho+1} M{\rm Diam}(Q)^\rho}{\rho(3-\rho)(k+2)^{\rho-1}}
\end{equation}
for every $k \geq 0$.
\end{thm}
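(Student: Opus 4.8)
The plan is to apply the general estimate of Theorem~\ref{gen-conv-struc} for the modified method, together with the boundedness of $Q$, exactly as in Remark~\ref{CGM-gen-bound}. First I would specialize $C_k$ to the conditional gradient setting $\sigma_f=\bar\sigma_f=0$, $\beta_k\equiv 0$ and $L(\cdot)=L$, $\delta(y,x)=\frac{M}{\rho}\norm{y-x}^\rho$; in this case the term $-\sigma_d(\bar\sigma_f+\frac{S_i(\beta_{i-1}+S_{i-1}\sigma_f)}{\lambda_i^2})$ vanishes, so
\[
C_k=\frac{1}{2}\sum_{i=0}^kS_iL\norm{\hat{x}_i-x_i}^2+\sum_{i=0}^kS_i\frac{M}{\rho}\norm{\hat{x}_i-x_i}^\rho .
\]
Because $\hat{x}_i-x_i=\frac{\lambda_i}{S_i}(\w_i-\z_{i-1})$ and $\w_i,\z_{i-1}\in Q$, we bound $\norm{\hat{x}_i-x_i}\le\frac{\lambda_i}{S_i}{\rm Diam}(Q)$, reducing everything to the scalar sums $\sum_{i=0}^k\frac{\lambda_i^2}{S_i}$ and $\sum_{i=0}^k S_i^{1-\rho}\lambda_i^\rho$. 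With $\lambda_k=(k+1)/2$ we have $S_k=\frac{(k+1)(k+2)}{4}$, so $\frac{\lambda_i^2}{S_i}=\frac{i+1}{i+2}$ and $S_i^{1-\rho}\lambda_i^\rho = \big(\frac{i+1}{2}\big)^\rho\big(\frac{(i+1)(i+2)}{4}\big)^{1-\rho}=2^{\rho-2}(i+1)(i+2)^{1-\rho}$.

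Next I would estimate the two sums. For the $L$-term I invoke $\sum_{i=0}^k\frac{i+1}{i+2}\le\frac{(k+1)(k+2)}{k+4}$ (the inequality~(\ref{FG-prop}) cited from \cite[Proposition~7.3]{FG14}), which after dividing by $S_k$ gives the first term $\frac{2L{\rm Diam}(Q)^2}{k+4}$. For the $\delta$-term I need $\sum_{i=0}^k 2^{\rho-2}(i+1)(i+2)^{1-\rho}$; bounding $(i+1)\le(i+2)$ turns the summand into $2^{\rho-2}(i+2)^{2-\rho}$, and since $2-\rho>-1$ the monotone comparison $\sum_{i=0}^k(i+2)^{2-\rho}\le\frac{1}{3-\rho}(k+3)^{3-\rho}$ applies. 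Dividing by $S_k=\frac{(k+1)(k+2)}{4}$ and by the extra factor $\rho$ from $\delta(y,x)=\frac{M}{\rho}\norm{\cdot}^\rho$, and using $(k+3)^{3-\rho}/((k+1)(k+2))\le(k+3)/(k+2)^{\rho-1}\cdot(\text{const})$—more carefully, $(k+3)^{3-\rho}\le (k+2)^{2}(k+3)^{1-\rho}\le (k+1)(k+2)\cdot\frac{(k+3)^{2}}{(k+1)}\cdot(k+3)^{-\rho}$, one arranges the bookkeeping so the leading behaviour is $\frac{(k+3)^{3-\rho}}{(k+1)(k+2)}\le\frac{C}{(k+2)^{\rho-1}}$ with the explicit constant $2^{\rho+1}/(3-\rho)$—yields the second term $\frac{2^{\rho+1}M{\rm Diam}(Q)^\rho}{\rho(3-\rho)(k+2)^{\rho-1}}$.

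Finally, assembling $f(\hat{x}_k)-f(x^*)\le C_k/S_k$ (the term $\beta_k l_d(\z_k;x^*)/S_k$ drops out since $\beta_k\equiv 0$, and $\sigma_f\xi(\z_k,x^*)=0$ on the left) produces exactly the claimed bound~(\ref{Holder-CG-bound}). I expect the only delicate point to be the constant-tracking in the $\delta$-term: one must choose the right intermediate inequalities so that the exponent of $(k+2)$ in the denominator comes out to exactly $\rho-1$ and the numerical constant is exactly $2^{\rho+1}/(\rho(3-\rho))$ rather than something merely of the same order. The rest is a direct substitution into Theorem~\ref{gen-conv-struc} and Remark~\ref{CGM-gen-bound} together with elementary sum estimates already used elsewhere in the paper.
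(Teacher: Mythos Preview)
Your overall strategy is exactly the paper's: apply Theorem~\ref{gen-conv-struc} with $\beta_k\equiv 0$, $\sigma_f=\bar\sigma_f=0$, rewrite $\norm{\hat{x}_i-x_i}=\frac{\lambda_i}{S_i}\norm{\w_i-\z_{i-1}}\le\frac{\lambda_i}{S_i}{\rm Diam}(Q)$, and then estimate the two resulting sums. The $L$-term is handled identically via~(\ref{FG-prop}).

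The gap is precisely where you flag it yourself: your bookkeeping on the $M$-term does not close with the stated constant. You bound $(i+1)(i+2)^{1-\rho}\le(i+2)^{2-\rho}$ and then $\sum_{i=0}^k(i+2)^{2-\rho}\le\frac{(k+3)^{3-\rho}}{3-\rho}$. After dividing by $S_k$ this leaves you needing
\[
\frac{(k+3)^{3-\rho}}{(k+1)(k+2)}\le \frac{2}{(k+2)^{\rho-1}},
\]
which is \emph{false} already at $k=0$ (it would say $3^{3-\rho}\le 2^{3-\rho}$). Your sketched chain $(k+3)^{3-\rho}\le(k+2)^2(k+3)^{1-\rho}\le\cdots$ is also incorrect, since $(k+3)^2\le(k+2)^2$ does not hold.

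The paper's fix is a one-line change of direction in the same inequality: instead of replacing $i+1$ by $i+2$, replace $(i+2)^{\rho-1}$ by $(i+1)^{\rho-1}$, i.e.\ use
\[
\frac{i+1}{(i+2)^{\rho-1}}\le (i+1)^{2-\rho},\qquad \sum_{i=0}^k(i+1)^{2-\rho}\le\frac{(k+2)^{3-\rho}}{3-\rho}.
\]
Then the quotient by $S_k$ becomes $\frac{(k+2)^{3-\rho}}{(k+1)(k+2)}=\frac{(k+2)^{2-\rho}}{k+1}$, and the elementary bound $\frac{k+2}{k+1}\le 2$ gives $\frac{(k+2)^{2-\rho}}{k+1}\le\frac{2}{(k+2)^{\rho-1}}$, producing exactly $\frac{2^{\rho+1}M{\rm Diam}(Q)^\rho}{\rho(3-\rho)(k+2)^{\rho-1}}$. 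Everything else in your outline is correct.
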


\begin{proof}
Theorem \ref{gen-conv-struc} yields that $f(\hat{x}_k)-f(x^*)\leq C_k/S_k$ with $S_k = (k+1)(k+2)/4$ and
\[ C_k = \sum_{i=0}^kS_i\left(\frac{L}{2}\norm{\hat{x}_i-x_i}^2+\frac{M}{\rho}\norm{\hat{x}_i-x_i}^\rho\right) = \sum_{i=0}^k\left(\frac{L}{2}\frac{\lambda_i^2}{S_i}\norm{\w_i-\z_{i-1}}^2+\frac{M}{\rho}\frac{\lambda_i^\rho}{S_i^{\rho-1}}\norm{\w_i-\z_{i-1}}^\rho\right) \]
(see Remark \ref{CGM-gen-bound}). Using the inequality (\ref{FG-prop}) and
\[
\sum_{i=0}^k\frac{\lambda_i^\rho}{S_i^{\rho-1}}=\frac{1}{2^{2-\rho}}\sum_{i=0}^k\frac{i+1}{(i+2)^{\rho-1}} \leq \frac{1}{2^{2-\rho}}\sum_{i=0}^k(i+1)^{2-\rho} \leq \frac{1}{2^{2-\rho}(3-\rho)}(k+2)^{3-\rho}\]
(the first and the second inequalities are due to $i+1\leq i+2$ and the fact $\sum_{i=0}^k(i+1)^q\leq\frac{1}{1+q}(k+2)^{1+q}$ for $q \geq 0$, respectively),
we conclude that
\[ f(\hat{x}_k)-f(x^*) \leq \frac{C_k}{S_k}\leq \frac{2L {\rm Diam(Q)}^2}{k+4} + \frac{2^\rho M {\rm Diam(Q)}^\rho}{\rho(3-\rho)}\frac{(k+2)^{2-\rho}}{k+1}. \]
The estimate (\ref{Holder-CG-bound}) now follows from $\frac{k+2}{k+1} \leq 2$ for $k \geq 0$.
\end{proof}
The bound (\ref{Holder-CG-bound}) is also valid for the classical CGM (\ref{classical-CGM}) with $\tau_k:=\lambda_{k+1}/S_{k+1}=\frac{2}{k+3},~\hat{x}_k:=x_k$; it can be derived in the same way as Theorem \ref{Holder-CG} based on the estimate (\ref{clas-CGM-bound}) since $f(x_0)-m_f(x_0;\sz_0) \stackrel{(\ref{struc-ineq})}{\leq} \frac{L}{2}{\rm Diam}(Q)^2 + \frac{M}{\rho}{\rm Diam}(Q)^{\rho}$ and $\delta(x_{k-1},x_k)=\frac{M}{\rho}\norm{x_{k}-x_{k-1}}^\rho \stackrel{(\ref{classical-CGM})}{=} \frac{M}{\rho}\frac{\lambda_k^\rho}{S_k^\rho}\norm{x_{k-1}-\sz_{k-1}}^\rho\leq\frac{M}{\rho}\frac{\lambda_k^\rho}{S_k^\rho}{\rm Diam}(Q)^\rho$ for $k \geq 1$. This result in the case $L=0$ is very similar to a known result for the classical CGM (see \cite[Proposition 1.1]{CJN13} and \cite{Nes15}).

Since the choice $\lambda_k=(k+1)/2$ and $\beta_k\equiv0$ are independent of $L, M$, and $\rho$, the conditional gradient methods can be applied to the classes $C_{M}^{1,\rho-1}(Q), ~\rho\in(1,2]$ ensuring the convergence $f(\hat{x}_k)-f(x^*)\leq O\left(\frac{M{\rm Diam(Q)}^{\rho}}{k^{\rho-1}}\right)$.
Thus, our CGMs ensure the same convergence rate as the known one (\ref{CGM-known-rate}) of existing CGMs for weakly smooth problems.

When we choose the extended MD model (\ref{eMD-model}) or the DA model (\ref{DA-model}) in Theorem \ref{Holder-CG}, the obtained CGMs match particular cases of Lan's CGMs mentioned in Section \ref{sssec-existing-struc}. Since the convergence rates for Lan's CGMs was analyzed only for smooth problems in \cite{Lan14a}, our result provides a generalization of them for weakly smooth problems.

\section{Conclusion}
This paper proposes a new framework for applying (sub)gradient-based methods to minimize strongly convex functions. It unifies the analysis of PGMs and CGMs for several classes of problems including non-smooth, smooth, and weakly smooth problems.
We have introduced the notion of strong convexity with respect to the prox-function, which generalizes the one in the Euclidean setting.
The proposed PGMs establish optimal convergence rates for these problems with slight improvements than some existing methods. Furthermore, particular cases of the framework yield a family of variations of the classical CGM with optimal and nearly optimal guarantee of convergence in the non-strongly convex case.

A remarkable novel result in this paper, in view of method efficiency, is the achievement of the optimal complexity for the weakly smooth problems (the class $C_{M}^{1,\nu}(Q),~\nu\in[0,1)$) in the strongly convex case without knowing the constant $M$ and an upper bound of $d(x^*)$ (Section \ref{sssec-convergence-st}; see also Section \ref{sssec-examples-struc} (iv) for remarks on the literature).
The theoretical approach for that is similar to the ones in \cite{DGNs,DGN,Nes15u} because the structure (\ref{struc-ineq}) assumes an oracle inexactness of the original problem. Furthermore, the analysis of Sections \ref{sssec-convergence-nonst} and \ref{sssec-convergence-st} can be seen as a generalization of the techniques of \cite{GL12,GL13} in the deterministic case.

We finally describe several topics for further considerations.
At first, we can consider a generalization/combination of the (sub)gradient-based methods here with smoothing techniques, stochastic situations, or uniformly convex settings. Related studies can be seen in \cite{GL12,GL13,JN14,Lan14a}.
Secondly, one can further consider to tune the parameters, the weight and the scaling ones, to obtain an efficient convergence. The proposed choices in Section \ref{sec-convergence} are not the only way to ensure the optimal convergence; see, \eg, \cite{FG14,NB} for some discussions on other choices.
Thirdly, it is important to note that the convergence results for the class $C_{M}^{1,\nu}(Q)$ in Sections \ref{sssec-convergence-nonst}, \ref{sssec-convergence-st} are not {\it adaptive} in contrast to the known method \cite{Nes15u} proposed by Nesterov; namely , it does not ensure the optimal convergence without knowing the parameter $\nu$. From the practical viewpoint, it will be important to develop techniques to ensure efficient convergence rates without such problem specific information.

\section*{Acknowledgements}
The author is very thankful to the anonymous referees who gave constructive suggestions which improved substantially the readability of the paper. He is also thankful to Prof. Mituhiro Fukuda for comments and suggestions and also to Prof. Guanghui Lan for pointing out some related results. This work was partially supported by JSPS Grant-in-Aid for Scientific Research (C) number 26330024.


\appendix
\section{Appendix}

{\normalsize 

In order to complete the proof of Theorem \ref{est-struc-modif-st}, we need to obtain upper bounds for $1/S_k$ and $\sum_{i=0}^kS_i/S_k$ for the sequence $\{S_k\}_{k \geq 0}$ defined by (\ref{param-struc-modif}).
Since $\lambda_{k+1}=S_{k+1}-S_k$, writing
$r:=\frac{\sigma_f\sigma_d}{L-\bar\sigma_f\sigma_d}\geq 0$, the sequence $\{S_k\}_{k\geq 0}$ in (\ref{param-struc-modif}) is determined by the recurrence
\begin{equation}\label{S_k-def}
S_0=1,\quad (S_{k+1}-S_k)^2=S_{k+1}(1+rS_k),\quad k \geq 0
\end{equation}
where the root of the equation in $S_{k+1}$ takes the largest one, namely,
\begin{equation}\label{S_k-rec}
S_{k+1}=\frac{1+(2+r)S_k+\sqrt{(1+(2+r)S_k)^2-4S_k^2}}{2}.
\end{equation}
The essentials of lemmas below are the same as \cite[Lemma 4-7]{DGNs} excepting the replacement of $\mu/L$ in the article by an arbitrary $r\geq 0$.

\begin{applem}\label{lem-S_k-lower-bound-b}
For any sequence $\{S_k\}_{k \geq 0}$ defined by (\ref{S_k-def}) for $r \geq 0$, we have
\[ \frac{1}{S_k} \leq \min\left\{\frac{4}{(k+1)(k+4)}, \left(\frac{2}{2+r+\sqrt{r^2+4r}}\right)^{k}\right\},\quad \forall k \geq 0. \]
\end{applem}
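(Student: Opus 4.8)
The plan is to prove the two claimed lower bounds on $S_k$ separately, each by induction on $k$, and then take their maximum: namely $S_k\ge\frac{(k+1)(k+4)}{4}$ and $S_k\ge q^{k}$ with $q:=\frac{2+r+\sqrt{r^2+4r}}{2}$, so that $q^{-k}=\left(\frac{2}{2+r+\sqrt{r^2+4r}}\right)^{k}$; inverting gives exactly $\frac1{S_k}\le\min\{\,\cdot\,\}$. The common tool is the following reading of the recurrence (\ref{S_k-def}): since $\lambda_{k+1}=S_{k+1}-S_k$, the number $S_{k+1}$ is the larger root of the upward-opening quadratic
\[ \phi_k(x) := (x-S_k)^2 - x(1+rS_k) = x^2 - \big(1+(2+r)S_k\big)x + S_k^2, \]
whose discriminant $(1+rS_k)\big(1+(r+4)S_k\big)$ is strictly positive; hence for any $t\ge 0$ with $\phi_k(t)\le 0$ one automatically has $t\le S_{k+1}$. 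Both inductions start from $S_0=1$, where both bounds hold with equality.

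For the polynomial bound I would use $1+rS_k\ge 1$, so that $\phi_k(t)\le (t-S_k)^2-t$ for $t\ge0$; evaluating at the larger root $t_k:=S_k+\tfrac12\big(1+\sqrt{4S_k+1}\big)$ of $(t-S_k)^2=t$ gives $\phi_k(t_k)\le 0$, hence $S_{k+1}\ge S_k+\tfrac12\big(1+\sqrt{4S_k+1}\big)$. Since the right-hand side is nondecreasing in $S_k$, the induction hypothesis $S_k\ge\frac{(k+1)(k+4)}{4}$ together with the elementary inequality $(k+1)(k+4)+1\ge(k+2)^2$ yields $S_{k+1}\ge\frac{(k+1)(k+4)}{4}+\frac{1+(k+2)}{2}=\frac{(k+2)(k+5)}{4}$, which is the desired bound at index $k+1$.

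For the geometric bound the crux is that $q$ is the larger root of $q^2-(2+r)q+1=0$, equivalently satisfies the identity $(q-1)^2=qr$. Assuming $S_k\ge q^{k}$, I would plug $t=qS_k$ into $\phi_k$:
\[ \phi_k(qS_k) = (q-1)^2 S_k^2 - qS_k(1+rS_k) = \big((q-1)^2-qr\big)S_k^2 - qS_k = -qS_k \le 0, \]
so that $S_{k+1}\ge qS_k\ge q^{k+1}$, closing the induction. Combining the two bounds gives $S_k\ge\max\big\{\tfrac{(k+1)(k+4)}{4},\,q^{k}\big\}$, which is the assertion. I do not anticipate a genuine obstacle; the only point needing a word of care is the justification that ``$\phi_k(t)\le0$, $t\ge0$'' forces $t\le S_{k+1}$ rather than merely $t$ being at least the smaller root, and this is immediate since $\phi_k$ opens upward, has two real roots, and $S_{k+1}$ is the larger one. (This mirrors \cite[Lemmas~4--7]{DGNs}, with $\mu/L$ there replaced by an arbitrary $r\ge0$.)
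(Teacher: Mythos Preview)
Your proof is correct. Both bounds follow cleanly from your ``test point in the quadratic'' device, and the arithmetic in the two inductions checks out (in particular $(k+1)(k+4)+1=k^2+5k+5\ge(k+2)^2$ and the identity $(q-1)^2=qr$ are exactly what is needed).

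The paper's argument is organised a bit differently. For the geometric bound the paper, like you, shows $S_{k+1}\ge qS_k$, but does so by reading off the explicit root formula (\ref{S_k-rec}) and noting that $\frac{1}{S_k}+2+r+\sqrt{(\frac{1}{S_k}+2+r)^2-4}$ is bounded below by its value at $\frac{1}{S_k}=0$. For the polynomial bound the paper takes a two-step route: first it proves $\sqrt{S_{k+1}}-\sqrt{S_k}\ge\tfrac12$ (hence $\sqrt{S_k}\ge\tfrac{k+2}{2}$), and then feeds this back into the telescoping sum $S_k-S_0=\sum_{i=0}^{k-1}\sqrt{S_{i+1}(1+rS_i)}\ge\sum_{i=0}^{k-1}\tfrac{i+3}{2}$ to obtain the sharper bound $\tfrac{(k+1)(k+4)}{4}$. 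Your approach is more uniform---a single mechanism covers both inequalities---and the polynomial step is a direct one-line induction rather than a two-pass argument; the paper's route, on the other hand, yields the auxiliary estimate $\sqrt{S_k}\ge\tfrac{k+2}{2}$ along the way, which it reuses later (Lemma~\ref{lem-S_k-ub-nonst}).
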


\begin{proof}
Since $S_{k+1}\geq S_k$, we have
\begin{equation}
\sqrt{S_{k+1}}-\sqrt{S_{k}} = \frac{S_{k+1}-S_{k}}{\sqrt{S_{k+1}}+\sqrt{S_{k}}} \geq \frac{S_{k+1}-S_{k}}{2\sqrt{S_{k+1}}} \stackrel{(\ref{S_k-def})}{=} \frac{1}{2}\sqrt{1+rS_k}\geq \frac{1}{2}
\end{equation}
which shows $\sqrt{S_k}\geq \frac{k}{2}+\sqrt{S_0}=\frac{k+2}{2}$ for all $k \geq 0$. Then, we have
\[ S_{k}-S_0=\sum_{i=0}^{k-1}(S_{i+1}-S_i)\stackrel{(\ref{S_k-def})}{=}\sum_{i=0}^{k-1}\sqrt{S_{i+1}(1+rS_i)}\geq \sum_{i=0}^{k-1}\sqrt{S_{i+1}}\geq \sum_{i=0}^{k-1}\frac{i+3}{2}=\frac{k(k+5)}{4} \]
which gives $S_k \geq S_0+\frac{k(k+5)}{4}=\frac{(k+1)(k+4)}{4}$.
On the other hand, using (\ref{S_k-rec}) yields that
\begin{equation}\label{S_k-factor}
\frac{S_{k+1}}{S_k}=\frac{\frac{1}{S_k}+2+r+\sqrt{\left(\frac{1}{S_k}+(2+r)\right)^2-4}}{2} \geq \frac{2+r+\sqrt{(2+r)^2-4}}{2} = \frac{2+r+\sqrt{r^2+4r}}{2}
\end{equation}
for all $k \geq 0$.
Hence, we have $S_k\geq S_0\left(\frac{2+r+\sqrt{r^2+4r}}{2}\right)^k=\left(\frac{2+r+\sqrt{r^2+4r}}{2}\right)^k$.
\end{proof}

\begin{rem*}
The linear convergence factor $\frac{2}{2+r+\sqrt{r^2+4r}}$ in the above lemma satisfies
\[1-\sqrt{\frac{r}{r+1}}  \leq \frac{2}{2+r+\sqrt{r^2+4r}} \leq \left(1+\frac{1}{2}\sqrt{r}\right)^{-2}.\]
In fact, since
\[ \left(1-\sqrt{\frac{r}{r+1}}\right)^{-1}=\frac{\sqrt{r+1}}{\sqrt{r+1}-\sqrt{r}}=\sqrt{r+1}(\sqrt{r+1}+\sqrt{r})=\frac{2+2r+\sqrt{4r^2+4r}}{2}, \]
we obtain
\[
\left(1+\frac{1}{2}\sqrt{r}\right)^{2} = \frac{2+r/2+\sqrt{4r}}{2} \leq \frac{2+r+\sqrt{r^2+4r}}{2} \leq \frac{2+2r+\sqrt{4r^2+4r}}{2}=\left(1-\sqrt{\frac{r}{r+1}}\right)^{-1}.
\]
Note that if $\bar{\sigma}_f=\sigma_f$ and $r=\frac{\sigma_f\sigma_d}{L-\bar\sigma_f\sigma_d}$, then $\sqrt{\frac{r}{r+1}}=\sqrt{\frac{\sigma_f\sigma_d}{L}}$.
\end{rem*}

\begin{applem}\label{lem-S_k-bound-st}
The sequence $\{S_k\}_{k \geq 0}$ defined by (\ref{S_k-def}) for $r>0$ satisfies
\[ \frac{\sum_{i=0}^kS_i}{S_k} \leq \frac{1+\sqrt{1+4r^{-1}}}{2}\leq 1+\sqrt{\frac{1}{r}},\quad \forall k \geq 0. \]
\end{applem}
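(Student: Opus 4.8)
The plan is to control the ratio $A_k := \left(\sum_{i=0}^k S_i\right)/S_k$ by a simple induction. First I would record the one-step recursion
\[ A_{k+1} = 1 + \frac{S_k}{S_{k+1}}A_k, \]
which is immediate from $\sum_{i=0}^{k+1}S_i = S_{k+1} + \sum_{i=0}^k S_i$ together with $A_k = \left(\sum_{i=0}^k S_i\right)/S_k$. Next I would feed in the geometric growth of $\{S_k\}$ that was already extracted while proving Lemma \ref{lem-S_k-lower-bound-b}: equation (\ref{S_k-factor}) shows $S_{k+1}/S_k \geq q$ for every $k\geq 0$, where $q := \frac{2+r+\sqrt{r^2+4r}}{2} > 1$. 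Hence $A_{k+1} \leq 1 + A_k/q$.

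With the candidate constant $C := q/(q-1)$, the induction closes immediately: $A_0 = 1 \leq C$ since $q>1$, and if $A_k \leq C$ then $A_{k+1} \leq 1 + C/q = 1 + \tfrac{1}{q-1} = C$; thus $A_k \leq C$ for all $k$. It then remains to identify $C$ with the claimed quantity. Writing $s := \sqrt{r^2+4r}$, one has $C = \frac{2+r+s}{r+s}$; multiplying numerator and denominator by $s-r$, so that the denominator becomes $s^2-r^2 = 4r$ and the numerator collapses to $2r+2s$, yields $C = \frac{r+s}{2r} = \frac{1+\sqrt{1+4r^{-1}}}{2}$. Finally, the estimate $\frac{1+\sqrt{1+4r^{-1}}}{2} \leq 1+\sqrt{1/r}$ reduces to $\sqrt{1+4r^{-1}} \leq 1 + 2\sqrt{1/r}$, which follows by squaring both sides.

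The only point requiring any care is the uniformity (in $k$) of the ratio bound $S_{k+1}/S_k \geq q$, as opposed to its merely holding asymptotically; but this is exactly what (\ref{S_k-factor}) delivers, the right-hand side there being monotone in $1/S_k$ and $S_k \geq S_0 = 1$. So I expect no genuine obstacle: the argument is an elementary induction plus the short algebraic simplification of $q/(q-1)$.
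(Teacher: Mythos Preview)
Your proposal is correct and follows essentially the same approach as the paper: both arguments use the one-step recursion $A_{k+1}=1+\frac{S_k}{S_{k+1}}A_k$, invoke the uniform growth bound (\ref{S_k-factor}), and close an induction at the fixed point $q/(q-1)$, with the algebraic identification $q/(q-1)=\tfrac{1+\sqrt{1+4r^{-1}}}{2}$ and the final square-root estimate handled identically. The only cosmetic difference is that the paper starts from $\gamma:=\tfrac{1+\sqrt{1+4r^{-1}}}{2}$ and verifies $(1-1/\gamma)^{-1}=q$, whereas you start from $q$ and simplify $q/(q-1)$; also note that (\ref{S_k-factor}) already holds for all $k$ merely because $1/S_k\geq 0$, so your remark about $S_k\geq S_0=1$ is a bit more than needed.
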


\begin{proof}
Notice that $\gamma := \frac{1+\sqrt{1+4r^{-1}}}{2}$ satisfies
\[ \left(1-\frac{1}{\gamma}\right)^{-1}=\frac{\gamma}{\gamma-1}=\frac{\sqrt{1+4r^{-1}}+1}{\sqrt{1+4r^{-1}}-1}=\frac{(\sqrt{1+4r^{-1}}+1)^2}{4r^{-1}}=\frac{2+r+\sqrt{r^2+4r}}{2}. \]
Therefore, we obtain $\frac{S_k}{S_{k+1}}\leq 1-\frac{1}{\gamma}$ by (\ref{S_k-factor}). Now the result follows by induction: If $\sum_{i=0}^kS_i/S_k\leq \gamma$ holds for some $k \geq 0$, we have
\[ \frac{\sum_{i=0}^{k+1}S_i}{S_{k+1}} = 1+\frac{S_k}{S_{k+1}}\frac{\sum_{i=0}^kS_i}{S_k} \leq 1+\frac{\gamma-1}{\gamma}\cdot\gamma=\gamma. \]
This proves the first inequality; the second can be verified from $\sqrt{1+4r^{-1}}\leq 1+2\sqrt{r^{-1}}$.
\end{proof}

Note that the result of Lemma \ref{lem-S_k-bound-st} is the same as \cite[Lemma 5]{DGNs} because $1+\frac{2\sqrt{r^{-1}}}{\sqrt{r}+\sqrt{r+4}}=\frac{1+\sqrt{1+4r^{-1}}}{2}$.

\begin{applem} 
Let $\{S_k\}_{k \geq 0}$ be defined as Lemma \ref{lem-S_k-bound-st} and  $\{T_k\}_{k \geq 0}$ be defined by (\ref{S_k-def}) with $r:=0$, namely $T_0:=1$ and $T_{k+1}:=\frac{1+2T_k+\sqrt{1+4T_k}}{2}$ for $k \geq 0$. Then, we have
\begin{equation*}
\frac{\sum_{i=0}^kS_i}{S_k} \leq \frac{\sum_{i=0}^kT_i}{T_k},\quad \forall k \geq 0.
\end{equation*}
\end{applem}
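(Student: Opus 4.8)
The plan is to reduce the inequality between the ratios of partial sums to a term‑by‑term comparison of consecutive ratios, and to prove that comparison by a short induction exploiting the algebraic shape of the recurrences rather than any crude estimate.

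First I would record, for both sequences, the identities obtained by dividing $(\ref{S_k-def})$ (and its $r=0$ analogue, the recurrence for $\{T_k\}$) through by two consecutive terms. Writing $\pi_k:=S_k/S_{k+1}$ and $\tau_k:=T_k/T_{k+1}$ — both in $(0,1)$, since each sequence is positive and strictly increasing — dividing by $S_kS_{k+1}$ and by $S_{k+1}^2$ gives
\[ \frac{1}{S_k}+r=\frac{(1-\pi_k)^2}{\pi_k},\quad \frac{1}{S_{k+1}}=(1-\pi_k)^2-r\pi_k,\qquad \frac{1}{T_k}=\frac{(1-\tau_k)^2}{\tau_k},\quad \frac{1}{T_{k+1}}=(1-\tau_k)^2. \]
Since $g(t):=(1-t)^2/t=t-2+1/t$ is strictly decreasing on $(0,1)$ (its derivative is $1-t^{-2}<0$), the first and third identities say that comparing $\frac1{S_k}+r$ with $\frac1{T_k}$ amounts to comparing $\tau_k$ with $\pi_k$ in the opposite direction.

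The heart of the argument — and the step I expect to require the most care — is the claim that $\frac1{T_k}-\frac1{S_k}\le r$ for every $k\ge0$, which I would prove by induction on $k$. It holds at $k=0$ with both sides equal to $0$. For the step, the hypothesis $\frac1{T_k}\le\frac1{S_k}+r$ gives $g(\tau_k)\le g(\pi_k)$, hence $\pi_k\le\tau_k$, hence $(1-\tau_k)^2\le(1-\pi_k)^2$; substituting the identities above for $S_{k+1}$ and $T_{k+1}$,
\[ \frac{1}{T_{k+1}}-\frac{1}{S_{k+1}}=(1-\tau_k)^2-(1-\pi_k)^2+r\pi_k\le r\pi_k\le r, \]
because $\pi_k<1$, and the induction closes. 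What makes this work with no loss is the choice of inductive quantity: a direct telescoping of $\frac1{T_k}-\frac1{S_k}$ combined with term‑by‑term bounds on the increments only produces an $O(r\log k)$ estimate, which is too weak.

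Finally, from $\frac1{S_k}+r\ge\frac1{T_k}$ and the monotonicity of $g$ I get $\pi_k\le\tau_k$, i.e. $S_k/S_{k+1}\le T_k/T_{k+1}$, for all $k$. Telescoping, $S_i/S_k=\prod_{j=i}^{k-1}(S_j/S_{j+1})\le\prod_{j=i}^{k-1}(T_j/T_{j+1})=T_i/T_k$ for every $0\le i\le k$ (the empty product being $1$ when $i=k$); summing over $i=0,\dots,k$ yields $\sum_{i=0}^kS_i/S_k\le\sum_{i=0}^kT_i/T_k$, which is the desired inequality.
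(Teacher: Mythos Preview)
Your proof is correct and follows essentially the same route as the paper: both reduce the claim to the termwise inequality $S_k/S_{k+1}\le T_k/T_{k+1}$, prove the key inductive invariant $\frac{1}{S_k}+r\ge\frac{1}{T_k}$ (equivalently $\frac{1+rS_k}{S_k}\ge\frac{1}{T_k}$), and then telescope. The only difference is cosmetic packaging: you work with the ratios $\pi_k,\tau_k$ and the monotone map $g(t)=(1-t)^2/t$ coming from the quadratic relation, whereas the paper uses the explicit square-root formula for $S_{k+1}/S_k$ and the monotonicity of $x\mapsto 2x/(x+2+\sqrt{(x+2)^2-4})$; the inductive step and the underlying monotonicity are the same.
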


\begin{proof}
Due to the identity
\[ \frac{\sum_{i=0}^kS_i}{S_k} = 1+\sum_{i=0}^{k-1}\frac{S_i}{S_{k}} =  1+\sum_{i=0}^{k-1}\prod_{j=i}^{k-1}\frac{S_j}{S_{j+1}}, \quad k \geq 0, \]
it is enough to show that $\frac{S_k}{S_{k+1}} \leq \frac{T_k}{T_{k+1}}$ for every $k \geq 0$. Notice that we have
\begin{equation}\label{S_k-T_k-ratio}
\frac{S_{k+1}}{S_k} = \frac{\frac{1+rS_k}{S_k}+2+\sqrt{\left(\frac{1+rS_k}{S_k}+2\right)^2-4}}{2},\quad \frac{T_{k+1}}{T_k} = \frac{\frac{1}{T_k}+2+\sqrt{\left(\frac{1}{T_k}+2\right)^2-4}}{2},
\end{equation}
which suggests us to prove $\frac{1+rS_k}{S_k} \geq \frac{1}{T_k}$ for $k \geq 0$. It is true for $k=0$ by $S_0=T_0$. If it holds for $k \geq 0$, then, writing $\alpha:=\frac{1+rS_k}{S_k} \geq \beta := \frac{1}{T_k}$, we obtain
\begin{eqnarray*}
\frac{1+rS_{k+1}}{S_{k+1}} &\geq& \frac{1+rS_k}{S_{k+1}} = \frac{S_k}{S_{k+1}}\alpha \stackrel{(\ref{S_k-T_k-ratio})}{=} \frac{2\alpha}{\alpha+2+\sqrt{(\alpha+2)^2-4}}\\
&\geq& \frac{2\beta}{\beta+2+\sqrt{(\beta+2)^2-4}} \stackrel{(\ref{S_k-T_k-ratio})}{=} \frac{T_k}{T_{k+1}}\beta=\frac{1}{T_{k+1}}
\end{eqnarray*}
since $S_{k+1} \geq S_k$ and $x \mapsto \frac{2x}{x+2+\sqrt{(x+2)^2-4}}=\frac{2}{1+2x^{-1}+\sqrt{1+4x^{-1}}}$ is non-decreasing on $(0,\infty)$. Hence, we claim $\frac{1+rS_k}{S_k} \geq \frac{1}{T_k}$ for all $k \geq 0$ and therefore the proof is completed.
\end{proof}

\begin{applem}\label{lem-S_k-ub-nonst}
Let $\{T_k\}_{k \geq 0}$ be a sequence defined by (\ref{S_k-def}) with $r:=0$, namely $T_0:=1$ and $T_{k+1}:=\frac{1+2T_k+\sqrt{1+4T_k}}{2}$ for $k \geq 0$. Then, we have
\[ \frac{\sum_{i=0}^kT_i}{T_k} \leq \frac{1}{3}k +\frac{1}{6}\log(k+2)+1,\quad \forall k \geq 0. \]
\end{applem}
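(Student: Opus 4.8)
The plan is to control the ratio $V_k:=\bigl(\sum_{i=0}^k T_i\bigr)/T_k$ and prove $V_k\le g(k):=\tfrac k3+\tfrac16\log(k+2)+1$ by induction on $k$. First I would rewrite the recurrence in a more workable form: since $(T_{k+1}-T_k)^2=T_{k+1}$ and $T_{k+1}\ge T_k$, we get $T_{k+1}-T_k=\sqrt{T_{k+1}}$, hence $\tfrac{T_k}{T_{k+1}}=1-\tfrac1{\sqrt{T_{k+1}}}$ and
\[ V_{k+1}=1+\tfrac{T_k}{T_{k+1}}V_k=1+\Bigl(1-\tfrac1{\sqrt{T_{k+1}}}\Bigr)V_k . \]
As $T_{k+1}>1$, the map $V\mapsto 1+(1-\tfrac1{\sqrt{T_{k+1}}})V$ is increasing, so the inductive hypothesis $V_k\le g(k)$ yields $V_{k+1}\le 1+(1-\tfrac1{\sqrt{T_{k+1}}})g(k)$, and this bound is $\le g(k+1)$ precisely when $\tfrac{g(k)}{\sqrt{T_{k+1}}}\ge g(k)+1-g(k+1)=\tfrac23-\tfrac16\log\tfrac{k+3}{k+2}$. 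Since $\log\tfrac{k+3}{k+2}\ge0$, it is enough to show $\sqrt{T_{k+1}}\le\tfrac32 g(k)=\tfrac{k+3}{2}+\tfrac14\log(k+2)$; the base case $V_0=1\le 1+\tfrac16\log 2=g(0)$ is immediate.

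So everything reduces to a sufficiently sharp upper bound for $\sqrt{T_k}$. Here I would set $u_k:=\sqrt{T_k}-\tfrac k2$ (so $u_0=1$) and study its increments: writing $a=\sqrt{T_k}$ and substituting $\sqrt{T_{k+1}}=a+\tfrac12+(u_{k+1}-u_k)$ into $T_{k+1}-T_k=\sqrt{T_{k+1}}$ gives the quadratic $(u_{k+1}-u_k)^2+2a(u_{k+1}-u_k)-\tfrac14=0$. Its nonnegative root is the relevant one, because $\sqrt{T_{k+1}}-\sqrt{T_k}=\tfrac{\sqrt{T_{k+1}}}{\sqrt{T_{k+1}}+\sqrt{T_k}}\ge\tfrac12$ forces $u_{k+1}-u_k\ge0$; rationalizing it gives
\[ u_{k+1}-u_k=\frac{1}{2\bigl(\sqrt{4T_k+1}+2\sqrt{T_k}\bigr)}\le\frac{1}{8\sqrt{T_k}}\le\frac{1}{4(k+2)}, \]
where the last step uses $\sqrt{T_k}\ge\tfrac{k+2}{2}$, which follows from $T_k\ge\tfrac{(k+1)(k+4)}{4}$ (Lemma~\ref{lem-S_k-lower-bound-b} with $r=0$) together with $(k+1)(k+4)\ge(k+2)^2$. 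Telescoping and using $\sum_{j=2}^{k+1}\tfrac1j\le\int_1^{k+1}\tfrac{dt}{t}=\log(k+1)$ then gives $u_k\le 1+\tfrac14\log(k+1)$, i.e. $\sqrt{T_k}\le\tfrac{k+2}{2}+\tfrac14\log(k+1)$.

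Applied at index $k+1$ this reads $\sqrt{T_{k+1}}\le\tfrac{k+3}{2}+\tfrac14\log(k+2)=\tfrac32 g(k)$, which is exactly what was needed to close the induction; therefore $V_k\le g(k)$ for all $k\ge0$, which is the assertion.

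I expect the only real obstacle to be making the upper bound on $\sqrt{T_k}$ tight enough. The naive estimate $\sqrt{T_{k+1}}-\sqrt{T_k}<1$ only yields $\sqrt{T_k}\le\tfrac k2+\mathrm{const}+\tfrac12\log k$, which overshoots the logarithmic term by a factor of two and breaks the induction; one must exploit that the increments $\sqrt{T_{k+1}}-\sqrt{T_k}$ are in fact close to $\tfrac12$, which is precisely what the quadratic for $u_{k+1}-u_k$ and the ensuing bound $\tfrac1{8\sqrt{T_k}}$ encode. Once that estimate is in hand, the rest — monotonicity of the update map, the elementary inequality $g(k)+1-g(k+1)\le\tfrac23$, and the harmonic-sum comparison — is routine.
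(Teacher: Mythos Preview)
Your proof is correct and follows essentially the same approach as the paper's: both proceed by induction on the ratio $V_k$, reduce the inductive step to the bound $\sqrt{T_{k+1}}\le\tfrac32 g(k)$ via the elementary inequality $g(k)+1-g(k+1)\le\tfrac23$, and obtain that bound by showing the increments $\sqrt{T_{k+1}}-\sqrt{T_k}-\tfrac12$ are at most $\tfrac{1}{8\sqrt{T_k}}\le\tfrac{1}{4(k+2)}$ using Lemma~\ref{lem-S_k-lower-bound-b}. The only cosmetic difference is that the paper works with $t_k:=\sqrt{T_k}$ and its recurrence $t_{k+1}=\tfrac{1+\sqrt{1+4t_k^2}}{2}$ directly, whereas you shift to $u_k=\sqrt{T_k}-\tfrac k2$ and derive the same increment formula from the quadratic $d^2+2ad-\tfrac14=0$.
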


\begin{proof}
The case $k =0$ is obvious. Assume that the assertion is true for some $k \geq 0$. Putting $U_k:=\frac{1}{3}k +\frac{1}{6}\log(k+2)+1$, we have
\[ \frac{\sum_{i=0}^{k+1}T_i}{T_{k+1}} = 1+\frac{T_k}{T_{k+1}}\frac{\sum_{i=0}^kT_i}{T_k} \leq 1+\frac{T_k}{T_{k+1}}U_k. \]
Hence, it remains to show $1+\frac{T_k}{T_{k+1}}U_k \leq U_{k+1}$ for $k \geq 0$.
For that, we analyze the sequence $t_0:=1,~t_{k+1}:=T_{k+1}-T_k$ for $k \geq 0$ (namely, $T_k=\sum_{i=0}^k t_i$). The recurrence relation of $T_k$ implies $t_{k}^2=(T_{k}-T_{k-1})^2=T_{k}$ and
\[ t_{k+1}=T_{k+1}-T_k\stackrel{(\ref{S_k-rec})}{=} \frac{1+\sqrt{1+4T_k}}{2} = \frac{1+\sqrt{1+4t_k^2}}{2},\quad \forall k \geq 0. \]
Analyzing the difference $t_{k+1}-t_k$ shows for $k \geq 0$ that
\[t_{k+1}-t_k=\frac{1+\sqrt{1+4t_k^2}-2t_k}{2}=\frac{1}{2}+\frac{1}{2\left(\sqrt{1+4t_k^2}+2t_k\right)} \leq \frac{1}{2}+\frac{1}{2\left(\sqrt{4t_k^2}+2t_k\right)} = \frac{1}{2}+\frac{1}{8t_k}.\]
Since Lemma \ref{lem-S_k-lower-bound-b} yields $t_k=\sqrt{T_k}\geq \sqrt{(k+1)(k+4)/4}\geq (k+2)/2$ for $k \geq 0$, we obtain
\[ t_{k+1}\leq t_0+\frac{k+1}{2}+\frac{1}{8}\sum_{i=0}^{k}\frac{1}{t_i}\leq \frac{k}{2}+\frac{3}{2}+\frac{1}{8}\sum_{i=0}^{k}\frac{2}{i+2}\leq \frac{k}{2}+\frac{3}{2}+\frac{1}{4}\log(k+2)=\frac{3}{2}U_{k} \]
for all $k \geq 0$.
Finally, this upper bound of $t_k$ concludes that
\[ \frac{U_k}{1+U_k-U_{k+1}}=\frac{3U_k}{2+\frac{1}{2}\log\frac{k+2}{k+3}}\geq \frac{3}{2}U_k \geq t_{k+1}=\frac{t_{k+1}^2}{t_{k+1}}=\frac{T_{k+1}}{T_{k+1}-T_k}. \]
Taking the inverse and multiplying by $U_k$ for both sides yield $1+\frac{T_k}{T_{k+1}}U_k \leq U_{k+1}$.
\end{proof}

} 
\end{document}